%%%%%%%%%%%%%%%%%%%%%%%%%%%%%%%%%%%%%%%%%%%%%%%%
%
%
%
%%%%%%%%%%%%%%%%%%%%%%%%%%%%%%%%%%%%%%%%%%%%%%%%
\documentclass[11pt]{amsart}
\usepackage{amssymb,mathrsfs,graphicx,enumerate,mathabx}
\usepackage{amsmath,amsfonts,amssymb,amscd,amsthm,bbm}
\usepackage[retainorgcmds]{IEEEtrantools}
\usepackage{colortbl}
\usepackage{mathtools}
\mathtoolsset{showonlyrefs}
\allowdisplaybreaks
\topmargin-0.1in \textwidth6.in \textheight8.5in \oddsidemargin0in
\evensidemargin0in
\title[CS model on $\mathrm{SO}(3)$]{Emergent behaviors of rotation matrix flocks}

\author[Fetecau]{Razvan C. Fetecau}
\address[Razvan C. Fetecau]{\newline Department of Mathematics\newline Simon Fraser University, 8888 University, Burnaby, BC V5A 1S6, Canada.}
\email{razvan\_fetecau@sfu.ca}

\author[Ha]{Seung-Yeal Ha}
\address[Seung-Yeal Ha]{\newline Department of Mathematical Sciences\newline Seoul National University, Seoul 08826, and \newline
Korea Institute for Advanced Study, Hoegiro 85, Seoul 02455, Republic of Korea}
\email{syha@snu.ac.kr}

\author[Park]{Hansol Park}
\address[Hansol Park]{\newline Department of Mathematical Sciences\newline Seoul National University, Seoul 08826, Republic of Korea}
\email{hansol960612@snu.ac.kr}

\newtheorem{theorem}{Theorem}[section]
\newtheorem{lemma}{Lemma}[section]

\newtheorem{proposition}{Proposition}[section]
\newtheorem{remark}{Remark}[section]

\newtheorem{definition}{Definition}[section]

\newcommand{\bbr}{\mathbb R}

\newcommand{\bbs}{\mathbb S}

\newcommand{\calR}{\mathcal{R}}
\newcommand{\calQ}{\mathcal{Q}}
\newcommand{\calC}{\mathcal{C}}
\newcommand{\dRQ}{d_{RQ}}

\newcommand{\bx}{\mathbf{x}}
\newcommand{\by}{\mathbf{y}}

\newcommand{\bn}{\mathbf{n}}
\newcommand{\bu}{\mathbf{u}}
\newcommand{\bv}{\mathbf{v}}
\newcommand{\ba}{\mathbf{a}}

\newcommand{\hba}{\widehat{\mathbf{a}}}
\newcommand{\hbu}{\widehat{\mathbf{u}}}
\newcommand{\hbv}{\widehat{\mathbf{v}}}
\newcommand{\hbn}{\widehat{\mathbf{n}}}
\newcommand{\hbx}{\widehat{\mathbf{x}}}
\newcommand{\hby}{\widehat{\mathbf{y}}}
\newcommand{\Tr}{\mathrm{tr}}
\newcommand{\Rz}{R^0}

\newcommand{\az}{\mathbf{a_0}}

\newcommand{\veeA}{\mathbf{a}}
\newcommand{\hyp}{\mathcal{H}}

\makeatletter
\@namedef{subjclassname@2020}{\textup{2020} Mathematics Subject Classification}
\makeatother

\begin{document}

\date{\today}

\subjclass[2020]{70G60, 82C10} \keywords{Cucker-Smale model, special orthogonal group, emergence, flocking, velocity alignment, rotation group.}

\thanks{\textbf{Acknowledgment.} R.F. acknowledges support from NSERC Discovery Grant PIN-341834 during this research.
The work of S.-Y. Ha was supported by National Research Foundation of Korea (NRF-2020R1A2C3A01003881).}

\begin{abstract}
We derive an explicit form for the Cucker-Smale (CS) model on the special orthogonal group $\mathrm{SO}(3)$ by identifying closed form expressions for geometric quantities such as covariant derivative and parallel transport in exponential coordinates. We study the emergent dynamics of the model by using a Lyapunov functional approach and La Salle's invariance principle. Specifically, we show that velocity alignment emerges from some admissible class of initial data, under suitable assumptions on the communication weight function. We characterize the $\omega$-limit set of the dynamical system and identify a dichotomy in the asymptotic behavior of solutions. Several numerical examples are provided to support the analytical results. 
\end{abstract}

\maketitle \centerline{\date}

%\tableofcontents

\section{Introduction}
Collective behaviors of complex biological systems are ubiquitous in nature, e.g., aggregation of bacteria \cite{K-S, T-B},  synchronous flashing of fireflies \cite{B-B}, synchronization of rhythms in pacemaker cells \cite{Pe}, flocking of migratory birds \cite{B-C, Rey} and swarming of fish \cite{Ao, D-M, T-T}. We refer to \cite{A-B-F, O-M, P-E-G, V-Z} for a brief survey of collective dynamics and engineering applications in the decentralized control of multi-agent systems. Among them, our main interest lies in flocking behaviors in which particles organize themselves from a disordered state to an ordered motion using simple rules based on environmental information. Despite its ubiquitous presence, systematic mathematical studies have begun only several decades ago by Vicsek et al. \cite{V}, a group of statistical physicists who were mainly motivated by Reynolds's simulations \cite{Rey} of bird flocking in computer graphics. Since then, several mechanical and phenomenological models have been used to study flocking behavior. 

In this paper, we are interested in the Cucker-Smale (CS) model \cite{C-S} for flocking of a matrix ensemble on the special orthogonal matrix group $\mathrm{SO}(3)$, given by:
\begin{equation} \label{A-0-0}
 \mathrm{SO}(3):= \{R\in\bbr^{3\times 3}: R^\top R=I\text{ and }~ \mathrm{det} ~R=1\}, 
\end{equation}
where $\bbr^{3 \times 3}$ is the matrix group consisting of all $3\times 3$ real matrices and $I$ denotes the $3 \times 3$ identity matrix.  Note that $\mathrm{SO}(3)$, also referred to as the rotation group, is a Lie group, having a group and a manifold structure at the same time.  Elements in $ \mathrm{SO}(3)$ are called rotation matrices; they are characterized by an axis and an angle of rotation. The rotation group is the configuration space of a rigid body in $\bbr^3$ that undergoes rotations only (no translations).  There has been recent growing interest in studying collective behavior on $ \mathrm{SO}(3)$  \cite{D-F-M, D-D-F-M, H-K-S}, motivated by body attitude coordination of interacting agents in a variety of applications. For example, engineering applications include synchronization of satellite attitudes \cite{B-P-G, K-G, L-B} and camera pose averaging \cite{TronVidalTerzis2008}. In biology, it is very common for animals such as birds and fish to coordinate their body attitudes -- see  \cite{D-F-M} for some images of collective motions in birds and dolphins.
 %Applications involving the rotation group are numerous, we only mention here the averaging the poses of an object viewed by a network of cameras; \cite{TronAfsariVidal2012}

Before we present the CS model on $\mathrm{SO}(3)$, we introduce the CS model on the Euclidean space $\bbr^d$.
% which resembles a Newton type particle system. 
Consider an ensemble of $N$ identical particles with unit mass in $\bbr^n$, and let $\bx_i$ and $\bv_i$ be the position and velocity of the $i$-th particle, respectively. The CS model reads:
\begin{equation}
\begin{cases} \label{A-0}
\displaystyle {\dot \bx}_i  = \bv_i, \quad t > 0, \quad i  = 1, \cdots, N, \\
\displaystyle {\dot \bv}_i  = \frac{\kappa}{N} \sum_{k =1}^{N} \phi_{ik}(\bv_k - \bv_i),
\end{cases}
\end{equation}
where $\kappa$ is a nonnegative coupling strength and $\phi_{ik} = \phi(|\bx_k - \bx_i|)$ are communication weights given in terms of a bounded and Lipschitz continuous communication function $\phi$. In this setting, the global well-posedness of \eqref{A-0} follows directly from the Cauchy-Lipschitz theory together with uniform boundedness of the kinetic energy.  The CS model \eqref{A-0} exhibits interesting flocking dynamics depending on the initial configuration, system parameters and communication functions (see \cite{C-S, H-L, H-T}), and it has been studied extensively in applied mathematics and control communities in the last decade. The model has been investigated in diverse directions and areas of applications, e.g., network topology \cite{C-D2, C-D3, D-H-J-K, D-Q, L-H, L-X}, global and local flocking dynamics \cite{C-S, H-L, H-T, M-T1}, interaction with nonlinear damping \cite{F-H-J},  time-delayed interactions \cite{E-H-S}, kinetic and hydrodynamic CS equations \cite{F-H-T, L-S, P-S, R-S, S-T1, S-T2}, coupling with thermodynamics \cite{H-K-L, H-K-M-R-Z, H-K-Rug, H-R}, complete predictability \cite{H-K-P-Z}, extension to manifolds \cite{A-H-S-0, A-H-S, A-H-P-S, H-K-S} and relativistic CS model \cite{H-K-Rug-0}. We refer to recent survey articles \cite{A-B-F, C-H-L, M-T} for overview and further references.
\vspace{0.2cm}

In the present paper we address the``{\it Cucker-Smale flocking realizability problem"} on $\mathrm{SO}(3)$:
\begin{itemize}
\item
(Q1):~Is there a CS type flocking model on $\mathrm{SO}(3)$? 
\vspace{0.1cm}

\item
(Q2): ~If so, under what conditions on system parameters and initial data, can we guarantee emergent dynamics?
\end{itemize}

\vspace{0.2cm}

To answer question (Q1), we need to replace the R.H.S. of $\eqref{A-0}_2$ by a suitable internal force so that  particles' positions stay on $\mathrm{SO}(3)$ through the time evolution. This is not obvious at first glance, as we need to compare velocities belonging to different tangent spaces. In fact, question (Q1) was already answered affirmatively in an {\it abstract} setting in which the underlying manifold $({\mathcal M}, g)$ is a connected, complete and smooth Riemannian manifold with a metric $g$ (see \cite{H-K-S}), whereas question (Q2) was answered only for the sphere and the hyperboloid (see \cite{A-H-S, A-H-P-S}).

%To be more precise,  let (${\mathcal M}, g)$  be a connected, complete and smooth $n$-dimensional Riemannian manifold with a metric $g$, and let $p \in {\mathcal M}$ and $({\mathcal U},\bx)$ be a chart so that the point $p$ can be assigned to local coordinates $\bx(p) = (x^1(p), \cdots, x^n(p)) \in \bbr^n$. For each $p \in {\mathcal M}$, there exists an associated vector space $T_p {\mathcal M}$ which is called the tangent space. Moreover, the coordinates induce a basis $\{ \partial_\alpha := \frac{\partial}{\partial{x^\alpha}}:1\leq \alpha\leq n \}$ for the tangent space. The collection of all tangent vectors on ${\mathcal M}$ forms the tangent bundle $T {\mathcal M}$:
%\[ T{\mathcal M} := \{(p,v):~p \in {\mathcal M}, \quad v \in T_p {\mathcal M} \}. \]
%Note that $T{\mathcal M}$ is a topological space with the topology inherited from $\bbr^n \times \bbr^n$. 

Specifically,  let (${\mathcal M}, g)$  be a connected, complete and smooth Riemannian manifold with metric $g$. Denote by $\frac{Dv}{dt}$ the covariant derivative of a tangent vector $v$. For $\bx_k,\bx_i \in \mathcal{M}$, let $P_{ki}$ be the parallel transport along the length minimizing geodesic from $\bx_k$ to $\bx_i$. In this abstract Riemannian setting, the CS model on $(M, g)$ takes the following form:
\begin{align}\label{A-1}
\begin{cases}
\displaystyle \dot{\bx}_i =\bv_i,\quad t>0,\quad i = 1, \cdots, N, \\
\displaystyle  \frac{D\bv_i}{dt}  =\frac{\kappa}{N}\sum_{k=1}^N\phi_{ik} (P_{ki}\bv_k- \bv_i),
\end{cases}
\end{align}
where $\kappa$ is a nonnegative coupling strength and $\phi_{ik}=\phi(d(\bx_i, \bx_k))$ are communication weights. Here, $d(\cdot, \cdot)$ denotes the geodesic distance on $\mathcal{M}$, and the communication weight function $\phi$ is assumed to be a nonnegative, bounded, and smooth function on $\bbr_{\geq0}$. Note that the parallel transport in model \eqref{A-1} is well-defined only if the length minimizing geodesic between $\bx_k$ and $\bx_i$ is unique, i.e., when the two points are not in the cut locus of each other. To alleviate this issue, if two points are in the cut locus of each other
then we set $\phi$ to be zero for such pairs of points\footnote{In the paper we will use the term {\em pair of cut points} to refer to two points on a manifold that are in the cut locus of each other.}, so that system \eqref{A-1} is well-defined and the local well-posedness of \eqref{A-1}  is guaranteed. For certain special manifolds, one can explicitly calculate $\frac{D\bv_i}{dt}$ and $P_{ki}\bv_k$, and write the abstract CS model \eqref{A-1} in an explicit form. Examples of such manifolds are the Poincar\'e Half plane \cite{H-K-S}, unit sphere \cite{A-H-S, H-K-S} and hyperboloid \cite{A-H-P-S}. For related first-order aggregation models on Riemannian manifolds, we also refer to \cite{A-M-P, F-H-P, F-P-P, F-Z, Ma,S-S, S-B-S}. 

We adopt the following definition of {\it velocity alignment}.  
\begin{definition} \label{D1.1}
\emph{\cite{H-K-S}}
Let $\{(\bx_i, \bv_i)\}_{i=1}^N$ be a global smooth solution to \eqref{A-1}. We say that the configuration exhibits {\it asymptotic velocity alignment} if the following relation holds: 
\[ \lim_{t \to \infty} \max_{1 \leq i, k \leq N}  \| P_{ki}\bv_k(t) - \bv_i(t) \|_{\bx_i} = 0, \]
where $\| P_{ki}\bv_k - \bv_i \|^2_{\bx_i}  = g_{\bx_i}( P_{ki}\bv_k - \bv_i, P_{ki}\bv_k - \bv_i)$.
\end{definition}

Next, we discuss briefly our main results. To simulate the particle system on a specific manifold, the abstract model \eqref{A-1} cannot be used directly, unless we find an explicit form for equation $\eqref{A-1}_2$.  Our first main result is concerned with the explicit derivation of the CS model on $\mathrm{SO}(3)$ by finding closed-form representations for $ \frac{Dv_i}{dt} $ and $P_{ki} \bv_k$. For the model on $\mathrm{SO}(3)$ we will use $R_i(t)$ and $V_i(t) =\frac{dR_i(t)}{dt}$ instead of $\bx_i(t)$ and $\bv_i(t)$, to denote the position and velocity of the $i$-th particle at $t > 0$. Since $V_i(t)$ lies in the tangent space of $\mathrm{SO}(3)$ at $R_i(t)$, there exists a unique skew-symmetric $3 \times 3$ matrix $A_i(t)$ (see Section \ref{sec:2}) such that
\[
V_i(t)=R_i(t)A_i(t).
\]

To close the dynamical system we need to find the dynamics for $V_i$ or $A_i$ from $\eqref{A-1}_2$. In order to present this explicit form, we need some preparations. First note that $\mathfrak{so}(3)$, the Lie algebra of $\mathrm{SO}(3)$ consisting of all $3\times 3$ skew-symmetric matrices, is isomorphic to $\bbr^3$ via the hat map:
\begin{align}\label{eqn:hat}
\bx = (x_1, x_2, x_3) \in \bbr^3 \quad \Longleftrightarrow \quad 
\widehat{\bx}=\begin{bmatrix}
0&-x_3&x_2\\
x_3&0&-x_1\\
-x_2&x_1&0
\end{bmatrix} \in \mathfrak{so}(3).
\end{align}
%which is exactly the tangent space of $\mathrm{SO}(3)$ at the identity $I$. We also denote the inverse hat operation as $\widecheck{}:\mathfrak{so}(3)\to\bbr^3$. 
For given $i, k \in \{1, \cdots, N\}$, we set 
\begin{align*}
\begin{aligned}
\widehat{\mathbf{u}}_{ki}  &:=\log(R_k^\top R_i)=\frac{\theta_{ki}}{2\sin\theta_{ki}}\big(R_k^\top R_i-R_i^\top R_k\big),  \\[5pt]
\theta_{ki} &:=\|\mathbf{u}_{ki}\|=\arccos\left(\frac{\mathrm{tr}(R_k^\top R_i)-1}{2}\right),\quad \mathbf{n}_{ki} :=\frac{\mathbf{u}_{ki}}{\theta_{ki}},
\end{aligned}
\end{align*}
where $\| \cdot \|$ denotes the Euclidean norm in $\bbr^3$.
The precise meanings of these variables will be given in Section \ref{sec:2}. Equipped with these notations, the CS model on $\mathrm{SO}(3)$ is given by:
\begin{align}\label{A-2}
\hspace{1cm}
\begin{cases}
\displaystyle\frac{dR_i}{dt} =R_iA_i,\quad t>0,\quad i = 1, \cdots, N,\\[5pt]
\displaystyle\frac{d\veeA_i}{dt} =\frac{\kappa}{N}\sum_{k=1}^N\phi_{ik}\left[ \left(1-\cos\frac{\theta_{ki}}{2}\right)(\mathbf{n}_{ki}\cdot \veeA_k)\mathbf{n}_{ki}+\sin\frac{\theta_{ki}}{2}\veeA_k\times \mathbf{n}_{ki}+\cos\frac{\theta_{ki}}{2}\veeA_k-\veeA_i\right ],
\end{cases}
\end{align}
where $\veeA_i \in \bbr^3$ satisfies $\widehat{\veeA}_i = A_i$.

As noted above, parallel transport between a pair of cut locus points is not well-defined, and in such cases we require that the communication weighs between the points is zero. On $\mathrm{SO}(3)$, the injectivity radius is $\pi$ and two rotation matrices $R$ and $Q$ are in the cut locus of each other if and only if $d(R,Q)=\pi$. For system \eqref{A-2} to be well-defined and also locally well-posed, one of the following conditions will be used later.
\begin{itemize}
\item 
$({\hyp}_A)$:  $\phi(R,Q)$ is a continuous function of $d(R,Q)$ that vanishes at pairs of cut locus points:
\[  \phi(R,Q)=\tilde{\phi}(d(R,Q)), \qquad \mbox{and} \qquad  \tilde{\phi}(\pi)=0. \]

\item
$({\hyp}_B)$: for any $1\leq i, k\leq N$ and $t\geq0$, the distance between $R_i(t)$ and $R_k(t)$ is strictly less than $\pi$. 
\end{itemize}

Our second main results deal with the emergent dynamics of \eqref{A-2}. For this, we introduce a Lyapunov functional $\mathcal{E}$ given by the total kinetic energy:  
\[
\mathcal{E}(t)=\sum_{i=1}^N {\| V_i(t)\|}_{R_i(t)}^2.
\]

Then, by straightforward calculations, one can derive a dissipation estimate (see Section \ref{sec:4.3}) :
\begin{equation} \label{A-3}
\frac{d\mathcal{E}}{dt} =-\frac{\kappa}{N}\sum_{i, k=1}^N\phi(R_i, R_k)\|P_{ki} V_k- V_i\|_{x_i}^2\leq0.
\end{equation}
We apply LaSalle's invariance principle \cite{La} and use \eqref{A-3} to obtain
\[
\lim_{t\to\infty}\frac{d\mathcal{E}}{dt} =0, \quad \mbox{or equivalently} \quad \lim_{t\to\infty}\phi(R_i, R_k)\|P_{ki} V_k- V_i\|_{R_i}^2=0\quad\forall~ i, k \in \{1, \dots ,N\}.
\]
Under the assumptions $({\hyp}_A)$ or $({\hyp}_B)$, one can then derive velocity alignment  (Theorem \ref{T4.1}).  

Finally, we characterize the $\omega$-limit set of model \eqref{A-2}, and show that a certain dichotomy holds in the asymptotic behavior of the solutions (Theorem \ref{T4.2}). This result is confirmed by numerical simulations.

The paper is organized as follows. In Section \ref{sec:2}, we review briefly the geometry of the rotation group, e.g., the exponential map, the explicit form of the metric tensor, geodesic equations, as needed for later sections. In Section \ref{sec:3}, we provide equations for geodesics and provide a closed form representation for parallel transport and its geometric interpretation. In Section \ref{sec:4}, we explicitly derive the CS model on $\mathrm{SO}(3)$ from the abstract model \eqref{A-1} and study its emergent dynamics. In Section \ref{sec:5}, we provide several numerical simulations and compare them with the analytical results in previous sections. Finally Section \ref{sec:6} is devoted to a brief summary of our main results and some remaining issues for future work. In Appendix we provide proofs for several lemmas and details to various calculations used in the paper.
\medskip

\noindent {\bf Gallery of Notation}: For notational simplicity, we use the following handy notation:
\[ \max_{i}  := \max_{1 \leq i \leq N}, \qquad  \max_{i, j}  := \max_{1 \leq i, j \leq N}, \qquad \sum_{i} :=  \sum_{i =1}^{N}, \qquad \sum_{i, j} :=  \sum_{i = 1}^{N} \sum_{j= 1}^{N}, \]
%Also, for $N$-dimensional vector $X=(x_1,\cdots,x_N)$, we denote by $\mathcal{D}(X)$ as a maximal difference between $x_\alpha'$s:
%\[\mathcal{D}(X):=\max_{\alpha, \beta}|x_\alpha-x_\beta|. \]
and if there is no confusion, we also use Einstein's notation on repeated indices from time to time. Let $\bbr^{3 \times 3}$ be the collection of real $3 \times 3$ matrices and for $A_1, A_2  \in \bbr^{3 \times 3}$, we introduce the Frobenius inner product and its associated norm:
\[
\langle A_1, A_2 \rangle_{\mathrm{F}} := \mbox{Tr}(A_1^\top  A_2), \quad \|A_1 \|_{\mathrm{F}} := \sqrt{\langle A_1, A_1 \rangle_\mathrm{F}}.
\]

 %%%%%%%%%%

\section{Geometry of the special orthogonal group $\mathrm{SO}(3)$}  \label{sec:2}
\setcounter{equation}{0}
In this section, we review minimal background on the geometry of the special orthogonal group $\mathrm{SO}(3)$ which will be necessary to follow discussions in later sections. In particular, we present $\mathrm{SO}(3)$ as a Riemannian manifold and derive explicit calculations using the parametrization by exponential coordinates.
\subsection{General considerations} \label{sec:2.1}  The Lie algebra $ \mathfrak{so}(3)$ of the Lie group $\mathrm{SO}(3)$ in \eqref{A-0-0}, representing the tangent space of $\mathrm{SO}(3)$ at the identity $I$, is given by:
\[ \mathfrak{so}(3) := \{A \in\bbr^{3\times 3}:~A^\top  + A = O \}, \]
where $O$ is the zero matrix. In general, the tangent space at a generic point $R \in \mathrm{SO}(3)$ is given by
\begin{equation}
\label{eqn:tspace}
T_R \mathrm{SO}(3) :=\{RA: A\in\mathfrak{so}(3)\}.
\end{equation}

The Riemannian metric $g$ on $T_R \mathrm{SO}(3)$ is defined as:
\begin{equation}
\label{eqn:Rmetric}
g(RA_1, RA_2) :=\frac{1}{2}\langle RA_1,RA_2\rangle_{\mathrm{F}}=\frac{1}{2}\langle A_1, A_2\rangle_{\mathrm{F}},
\end{equation}
for any $RA_1, RA_2\in T_R\mathrm{SO}(3)$. Note that the norm induced by the metric satisfies 
\begin{equation}
\label{eqn:length}
\| RA \|_{T_R \mathrm{SO}(3)} = \frac{1}{\sqrt{2}} {\| RA \|}_\mathrm{F} = \frac{1}{\sqrt{2}} {\| A \|}_\mathrm{F}.
\end{equation}

Throughout the paper, we use $\|\cdot\|$ for the usual vector norm in $\bbr^3$, $\|\cdot\|_\mathrm{F}$ for the Frobenius norm, and $\|\cdot\|_R$ for the Riemannian norm at $R\in\mathrm{SO}(3)$. We also denote the Frobenius and the Riemannian inner products by ${\langle \cdot, \cdot\rangle}_\mathrm{F}$ and ${\langle \cdot, \cdot\rangle}_R$, respectively. In summary, we have the following relations between the Frobenius and Riemannian norms/inner-products as follows:
\[
{\langle V_1, V_2\rangle}_R=\frac{1}{2} {\langle V_1, V_2\rangle}_\mathrm{F},\qquad {\|V\|}_R=\frac{1}{\sqrt{2}}{\|V\|}_\mathrm{F},
\]
where $V$, $V_1$, and $V_2$ are tangent vectors in $T_{R}\mathrm{SO}(3)$.

\subsubsection{Isomorphism between $\mathfrak{so}(3)$ and $\bbr^3$}  \label{sec:2.1.1} To any vector $\bx=(x_1, x_2, x_3)\in\bbr^3$ we associate its corresponding skew-symmetric matrix $\hbx \in \mathfrak{so}(3)$ given by \eqref{eqn:hat}.
%we define the hat operation:
%\begin{align}\label{eqn:hat}
%\widehat{\bx} :=\begin{bmatrix}
%0&-x_3&x_2\\
%x_3&0&-x_1\\
%-x_2&x_1&0
%\end{bmatrix},
%\end{align}
%which associates to $\bx$ its corresponding skew-symmetric matrix in $\mathfrak{so}(3)$. 
By introducing the canonical basis of $\mathfrak{so}(3)$:
\begin{equation} \label{B-4-0-0}
E_1=\begin{bmatrix}
0&0&0\\
0&0&-1\\
0&1&0
\end{bmatrix},\quad E_2=\begin{bmatrix}
0&0&1\\
0&0&0\\
-1&0&0
\end{bmatrix},\quad E_3=\begin{bmatrix}
0&-1&0\\
1&0&0\\
0&0&0
\end{bmatrix},
\end{equation}
the hat operation can be expressed as:
\begin{equation} \label{B-4-0}
\widehat{}:\bbr^3\to\mathfrak{so}(3),\qquad \mathbf{x}\mapsto \widehat{\bx} = x_\alpha E_\alpha,
\end{equation}
where we used the Einstein convention for summation over repeated indices. Note that the components of $E_\alpha$, $\alpha =1,2,3$ can be also expressed as 
\[
[E_\alpha]_{\beta\gamma}=-\epsilon_{\alpha\beta\gamma},
\]
where $\epsilon$ denotes the Levi-Civita symbol. 
%Using this basis, one can write \eqref{eqn:hat} as $\widehat{\bx} = x_\alpha E_\alpha.$

The inverse of the hat map, denoted by $\, \widecheck{}\,$, is given by:
\[
\widecheck{}: \mathfrak{so}(3)\to\bbr^3,\qquad  x_\alpha E_\alpha \mapsto \bx = (x_1, x_2, x_3).
\]
The hat operator (and its inverse) establish an isomorphism between $\bbr^3$ and $\mathfrak{so}(3)$. In particular, $\| \bx \| =  {\| \hbx \|}_I$, for all $\bx \in \bbr^3$. 

\subsubsection{Angle-axis representation} \label{sec:2.1.2} As mentioned in the Introduction, an element in $ \mathrm{SO}(3)$ is called a rotation matrix. Hence, any rotation $R \in \mathrm{SO}(3)$ can be identified with a pair $(\theta,\bv) \in [0,\pi] \times \bbs^2$, where $\bbs^2$ denotes the unit sphere in $\mathbb{R}^3$. The unit vector $\bv$ indicates the axis of rotation and $\theta$ represents the angle of rotation (by the right-hand rule) about the axis $\bv$. The pair $(\theta,\bv)$ is also referred to as the angle-axis representation of the rotation $R$. The expression of $R$ in terms of $(\theta,\bv)$ is given by the exponential map via Rodrigues's formula:
\begin{align}\label{eqn:tvtoR}
R=\exp(\theta \widehat{\mathbf{v}})=I+\sin\theta \widehat{\mathbf{v}}+(1-\cos\theta)\widehat{\mathbf{v}}^2.
\end{align}
The inverse of Rodrigues's formula, $\theta \hbv = \log(R)$, is also given by
\begin{equation}
\label{eqn:Rtotv}
\theta=\arccos\left(\frac{\mathrm{tr}R-1}{2}\right),\quad \widehat{\mathbf{v}}=\frac{1}{2\sin\theta}(R-R^\top ).
\end{equation}

\subsubsection{Geodesic distance, exponential and logarithm maps} \label{sec:2.1.3}  Given two rotation matrices $R$, $Q \in \mathrm{SO}(3)$, the shortest path between $R$ and $Q$ is the geodesic curve $\calR :[0,1] \to \mathrm{SO}(3)$ given by
\begin{equation*}
\label{eqn:Rgeod}
\calR(t) = R \exp(t \log(R^\top  Q)).
\end{equation*}
Note that 
\[ \calR'(t) = \calR(t) \log(R^\top  Q) \in T_{\calR(t)} \mathrm{SO}(3), \]
and  the Riemannian distance between $R$ and $Q$ on on $\mathrm{SO}(3)$ is
\begin{equation*}
\label{eqn:Rdist}
d(R,Q) = \dRQ,
\end{equation*}
where $\dRQ$ satisfies
\[ \dRQ \widehat{\bv}_{_{RQ}}= \log(R^\top  Q). \]
Then by \eqref{eqn:Rtotv}, we also have
\begin{equation}
\label{eqn:geod-dist}
d(R,Q)  =  \operatorname{arccos}\left(\frac{\operatorname{tr}(R^\top  Q) -1}{2} \right),
\end{equation}
and in particular, $d(I,R) = \operatorname{arccos}\left(\frac{\operatorname{tr}R -1}{2} \right)$. 

On the other hand, the Riemannian exponential and logarithm maps at $R$ are given as follows:
\begin{equation} \label{eqn:exp-map}
\begin{cases}
\displaystyle \exp_{R}: T_{R} \mathrm{SO}(3) \to \mathrm{SO}(3), \qquad \exp_{R}(R A) = R \exp (A), \\[5pt]
\displaystyle \log_{R}: \mathrm{SO}(3) \to T_{R} \mathrm{SO}(3), \qquad \log_{R}(Q) = R \log(R^\top  Q).
\end{cases}
\end{equation}
Rodrigues's  formulas \eqref{eqn:tvtoR} and \eqref{eqn:Rtotv} can be expressed using the exponential map at the identity $I$, as
\[
R = \exp_I (\theta \widehat \bv ), \qquad \theta \widehat \bv = \log_I R.
\]

%The considerations above lead to:
%\[
%\theta = d(I,R) = {\| \theta \widehat \bv \|}_{T_I \mathrm{SO}(3)} = \frac{1}{\sqrt{2}} {\| \theta \widehat \bv \|}_F,
%\]
%where for the last equality we used \eqref{eqn:length}. On the other hand, by \eqref{eqn:hat} and the fact that $\bv$ is a unit vector in $\mathbb{R}^3$ we have
%\[
%{\|\widehat \bv \|}_F^2 = 2 |\bv |^2 =2,
%\]
%making the equation above consistent. This justifies the coefficient $\frac{1}{2}$ in the Riemannian metric \eqref{eqn:metric}.

%%%%%

\subsection{Exponential coordinates} \label{sec:2.2}
For fixed $R_0\in \mathrm{SO}(3)$, we introduce exponential coordinates (also commonly referred to as Riemannian normal coordinates \cite{Petersen2006}) on $\mathrm{SO}(3)$ centerd at $R_0$. With this aim we consider the map $R : \bbr^3 \to \mathrm{SO}(3)$ (see \eqref{eqn:exp-map}) given by
\begin{align}\label{B-4}
\bx = (x_1, x_2, x_3) \longrightarrow  R(\bx) = R_0\exp\left(x_ \alpha E_\alpha \right),
\end{align}
which is an injective map on $\| \bx \|  <\pi$. The chart covers all but the cut locus of $R_0$. Throughout the paper, we will use Greek letters ($\alpha,\beta, \gamma$, etc) for coordinates on $\mathrm{SO}(3)$. Consequently, indices in Greek letters take values in $\{1,2,3\}$. We make this distinction to avoid possible confusion with indices for particles in the Cucker-Smale model, for which we use Roman letters ($i,j,k$, etc); such indices take values in $\{1,2,\dots,N\}$. First, we compute the basis $\{ \partial_{1}R(\mathbf{x}), \partial_{2}R(\mathbf{x}), \partial_{3}R(\mathbf{x})\} $ of the tangent space $T_{R(\bx)} \mathrm{SO}(3)$ at $R(\bx)$ induced by the coordinate chart \eqref{B-4}. By differentiating \eqref{B-4}, we have
\begin{equation}
\label{eqn:pR-alpha}
\partial_{\alpha}R(\mathbf{x})=R_0\partial_{\alpha}\big(\exp(x_\beta E_\beta)\big), \quad \mbox{where}~\partial_\alpha=\partial_{x_\alpha}. 
\end{equation}
We set
\begin{equation} \label{B-4-1}
\theta=\|\mathbf{x}\| ,\quad \mathbf{v}=\frac{\mathbf{x}}{\theta }.
\end{equation}
Then, we use \eqref{eqn:tvtoR} to find
\[
\exp(x_\beta E_\beta) =\exp(\theta \widehat{\mathbf{v}}) =I+\frac{\sin\theta }{\theta } \widehat{\mathbf{x}}+\frac{1-\cos\theta }{\theta ^2}\widehat{\mathbf{x}}^2.
\]
This yields,
\begin{align}
\begin{aligned}\label{E-5}
\partial_{\alpha}\big(\exp(x_\beta E_\beta)\big)&=\left(\frac{\theta \cos\theta -\sin\theta}{\theta ^2}\right)(\partial_\alpha \theta )\widehat{\mathbf{x}}+\frac{\sin\theta }{\theta }\partial_\alpha\widehat{\mathbf{x}} \\
&\quad +\left(\frac{\theta \sin\theta -2(1-\cos\theta )}{\theta ^3}\right)(\partial_\alpha \theta )\widehat{\mathbf{x}}^2+\frac{1-\cos\theta }{\theta ^2}\partial_\alpha \widehat{\mathbf{x}}^2.
\end{aligned}
\end{align}
%\partial_\alpha\theta =\frac{x_\alpha}{\theta }
\begin{lemma} \label{L2.1}
For any $\bx \in \bbr^3$, one has 
\begin{equation}
\label{E-6}
\partial_\alpha\widehat{\mathbf{x}}=E_\alpha, \qquad \partial_\alpha\widehat{\mathbf{x}}^2=\{E_\alpha, \widehat{\mathbf{x}}\},
\end{equation}
where $\{\cdot,\cdot\}$ denotes the anticommutator $\{A,B\} :=AB+BA$.
\end{lemma}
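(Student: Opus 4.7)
The lemma is essentially immediate from the definition \eqref{B-4-0} together with the Leibniz rule for matrix-valued functions, so the proof is really just an unwinding of notation. The plan is to take each identity in turn and differentiate directly.

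For the first identity, I would start from the expansion $\widehat{\bx} = x_\beta E_\beta$ in \eqref{B-4-0}, where $E_1, E_2, E_3$ are the constant basis matrices of $\mathfrak{so}(3)$ defined in \eqref{B-4-0-0}. Applying $\partial_\alpha = \partial_{x_\alpha}$ and using that the $E_\beta$ do not depend on $\bx$, I get $\partial_\alpha \widehat{\bx} = (\partial_\alpha x_\beta) E_\beta = \delta_{\alpha\beta} E_\beta = E_\alpha$, which is the first claim.

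For the second identity, I would view $\widehat{\bx}^2$ as the matrix product $\widehat{\bx}\cdot\widehat{\bx}$ and apply the Leibniz (product) rule, which is valid for matrix-valued functions since matrix multiplication is bilinear. This gives
\[
\partial_\alpha \widehat{\bx}^2 = (\partial_\alpha \widehat{\bx})\,\widehat{\bx} + \widehat{\bx}\,(\partial_\alpha \widehat{\bx}) = E_\alpha \widehat{\bx} + \widehat{\bx}\, E_\alpha,
\]
where I substituted the first identity in the last step. By the definition of the anticommutator $\{A,B\} = AB+BA$, the right-hand side is exactly $\{E_\alpha, \widehat{\bx}\}$, proving the second claim.

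There is no real obstacle here; the only thing one has to be careful about is not swapping the order in the product $E_\alpha \widehat{\bx}$ and $\widehat{\bx} E_\alpha$, since these matrices generally do not commute (that is precisely why the result is an anticommutator rather than $2E_\alpha \widehat{\bx}$). The lemma is stated as a standalone fact because the expressions $\partial_\alpha \widehat{\bx}$ and $\partial_\alpha \widehat{\bx}^2$ appear in \eqref{E-5} and will be reused repeatedly in the subsequent coordinate computations on $\mathrm{SO}(3)$.
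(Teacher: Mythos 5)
Your proof is correct and takes essentially the same approach as the paper: both arguments are just the product rule applied to $\widehat{\bx}^2$, with the paper expanding as $x_\beta x_\gamma E_\beta E_\gamma$ and differentiating the scalar coefficients while you apply the Leibniz rule directly to the matrix product $\widehat{\bx}\cdot\widehat{\bx}$ — a purely cosmetic difference.
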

\begin{proof}
The first identity follows from \eqref{eqn:hat} and \eqref{B-4-0-0} directly, and the second identity can be derived as follows.
\[
\partial_\alpha\widehat{\mathbf{x}}^2 =\partial_\alpha\left(x_\beta x_\gamma E_\beta E_\gamma\right)  =x_\beta E_\beta E_\alpha+x_\gamma E_\alpha E_\gamma=x_\beta\{E_\alpha, E_\beta\} = \{E_\alpha, x_\beta E_\beta\}  =  \{E_\alpha, \widehat{\mathbf{x}}\}.
\]
\end{proof}
\begin{remark}  \label{R2.1} As an application of Lemma \ref{L2.1}, we can check that $\partial_\alpha R(\bx)$ given by  \eqref{eqn:pR-alpha} lies in the tangent space of $\mathrm{SO}(3)$ at $R(\bx)$. Indeed, use \eqref{E-6} and $\partial_\alpha\theta =\frac{x_\alpha}{\theta }$ in \eqref{B-4-1} to rewrite \eqref{E-5} as:
\begin{align}
\begin{aligned}\label{B-6-1}
\partial_{\alpha}\big(\exp(x_\beta E_\beta)\big)&=\left(\frac{\theta \cos\theta -\sin\theta}{\theta ^3}\right) x_\alpha \widehat{\mathbf{x}}+\frac{\sin\theta }{\theta} E_\alpha \\
&\quad +\left(\frac{\theta \sin\theta -2(1-\cos\theta )}{\theta ^4}\right) x_\alpha \widehat{\mathbf{x}}^2+\frac{1-\cos\theta }{\theta ^2} \{E_\alpha, \widehat{\mathbf{x}}\}.
\end{aligned}
\end{align}
By a straightforward but tedious calculation, one can show that $\big(\exp(x_\gamma E_\gamma)\big)^\top \partial_{\alpha}\big(\exp(x_\beta E_\beta)\big)$
is a skew-symmetric matrix. This implies 
\[ \partial_\alpha R(\bx) \in T_{R(\bx)} \mathrm{SO}(3). \]
\end{remark}
In the following lemma, we list several other elementary results to be used later.
\begin{lemma}\label{L2.2}
For $\bx \in \bbr^3$, let $\widehat{\mathbf{x}}$ be given by  \eqref{B-4-0} and $\theta = \|\bx\|$. Then the following identities hold:
\[
\widehat{\mathbf{x}}^3=-\theta ^2\widehat{\mathbf{x}} \qquad \mbox{and} \qquad 
x_\alpha \widehat{\mathbf{x}}^2+\widehat{\mathbf{x}}^2E_\alpha \widehat{\mathbf{x}}=0, \qquad \text{ for all } \alpha = 1,2,3.
\]
\end{lemma}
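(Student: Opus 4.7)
The plan is to reduce both identities to the single vector-calculus fact that $\widehat{\bx}\by=\bx\times\by$ for every $\by\in\bbr^3$. The key computational input is the standard closed-form expression for the square of the hat matrix, which I will derive first and then use to trivialize both claims.

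First I would establish the formula
\[
\widehat{\bx}^2 \;=\; \bx\bx^{\top}-\theta^{2}I,
\]
by applying $\widehat{\bx}^2$ to an arbitrary vector $\by$ and using the double cross-product identity $\bx\times(\bx\times\by)=(\bx\cdot\by)\bx-\|\bx\|^{2}\by$. The first claim is then immediate: multiplying on the left by $\widehat{\bx}$ gives
\[
\widehat{\bx}^{3}=\widehat{\bx}\,\bx\bx^{\top}-\theta^{2}\widehat{\bx}=-\theta^{2}\widehat{\bx},
\]
because $\widehat{\bx}\bx=\bx\times\bx=0$. (Equivalently, this is the Cayley--Hamilton relation for the skew-symmetric $3\times3$ matrix $\widehat{\bx}$, whose characteristic polynomial is $\lambda^{3}+\theta^{2}\lambda$.)

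For the second identity I would observe that $E_\alpha=\widehat{e_\alpha}$, where $e_\alpha$ is the $\alpha$-th standard basis vector of $\bbr^{3}$, and apply the general product rule $\widehat{\ba}\,\widehat{\bb}=\bb\,\ba^{\top}-(\ba\cdot\bb)I$ (proved by the same double cross-product computation as above) to obtain
\[
E_\alpha\widehat{\bx}\;=\;\widehat{e_\alpha}\widehat{\bx}\;=\;\bx\, e_\alpha^{\top}-x_\alpha I.
\]
Left-multiplying by $\widehat{\bx}^{2}$ and using $\widehat{\bx}^{2}\bx=\bx\times(\bx\times\bx)=0$ yields
\[
\widehat{\bx}^{2}E_\alpha\widehat{\bx}\;=\;(\widehat{\bx}^{2}\bx)\,e_\alpha^{\top}-x_\alpha\widehat{\bx}^{2}\;=\;-x_\alpha\widehat{\bx}^{2},
\]
which is exactly the desired relation.

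There is no real obstacle here; both statements are algebraic identities that collapse once one has the explicit form of $\widehat{\bx}^{2}$ and the vanishing of $\widehat{\bx}\bx$. The only mildly delicate point is making the component index $\alpha$ the free (not summed) index in the second identity, and keeping straight that $E_\alpha=\widehat{e_\alpha}$ so that the same product formula $\widehat{\ba}\,\widehat{\bb}=\bb\ba^{\top}-(\ba\cdot\bb)I$ applies in both the first and the second step.
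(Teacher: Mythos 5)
Your proof is correct, but it takes a genuinely different and cleaner route than the paper's. The paper argues by brute-force matrix multiplication in Appendix \ref{appendix:L2.2}: it writes out $\hbx^2$ entry by entry, multiplies again by $\hbx$ to get the first identity, and verifies the second identity only for $\alpha=1$ (declaring $\alpha=2,3$ ``similar''). You instead derive everything from the single fact $\hbx\by=\bx\times\by$, which yields $\hbx^2=\bx\bx^\top-\theta^2 I$ and the dyadic product rule $\hbx\hby=\by\,\bx^\top-(\bx\cdot\by)I$; both identities then follow uniformly in $\alpha$ from $\hbx\bx=0$ and $\hbx^2\bx=0$. It is worth noting that the paper's computation for $\alpha=1$ implicitly lands on the same structure --- it observes that $x_1 I+E_1\hbx$ equals the rank-one matrix $\bx\,\mathbf{e}_1^\top$ --- but only as the output of an explicit calculation rather than as an instance of a general formula. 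What your approach buys: it treats all $\alpha$ at once, it makes the first identity transparent (it is Cayley--Hamilton for $\hbx$), and the same product rule would also deliver, with no additional work, the identities $\Tr(\hbx\hby)=-2\,\bx\cdot\by$ and $\hbx\hby\hbx=-(\bx\cdot\by)\hbx$ of Lemma \ref{lem:various}, which the paper again proves by writing out $3\times 3$ matrices in Appendix \ref{proofL:various}. The only thing the paper's version offers in exchange is that it is self-contained at the level of raw arithmetic, requiring no cross-product identities.
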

\begin{proof} 
See Appendix \ref{appendix:L2.2}.
\end{proof}

%%%%%

\subsection{The metric tensor} \label{sec:2.3}
In this subsection, we derive explicit representations for the metric tensor $g_{\alpha \beta}(\bx)$ and its inverse $g^{\alpha \beta}(\bx)$.  First, we study a set of elementary estimates in the following lemma. 
\begin{lemma}\label{L2.3}
For $\bx \in \bbr^3$, let $\widehat{\mathbf{x}}$ be given by  \eqref{B-4-0} and $\theta = \|\bx\|$. Then, the following identities hold 
\begin{eqnarray*}
&& (i)~\langle E_\alpha, E_\beta\rangle_{\mathrm{F}}=2\delta_{\alpha \beta}, \qquad
\langle\widehat{\mathbf{x}},\widehat{\mathbf{x}}\rangle_{\mathrm{F}} =2\theta ^2, \qquad
\langle \widehat{\mathbf{x}}, E_\alpha\rangle_{\mathrm{F}}=2x_\alpha, \\[5pt]
&& (ii)~\langle \widehat{\mathbf{x}}^2,\widehat{\mathbf{x}}^2\rangle_{\mathrm{F}} =2\theta ^4, \quad
\langle \widehat{\mathbf{x}}^2, \{E_\alpha, \widehat{\mathbf{x}}\}\rangle_{\mathrm{F}} =4 x_\alpha\theta ^2,\quad
\langle \{E_\alpha, \widehat{\mathbf{x}}\}, \{E_\beta, \widehat{\mathbf{x}}\}\rangle_{\mathrm{F}} =6x_\alpha x_\beta+2\delta_{\alpha\beta}\theta ^2.
\end{eqnarray*}
\end{lemma}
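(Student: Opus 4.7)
The plan is to treat part (i) as a direct warm-up and to reduce part (ii) to a compact matrix identity for $\widehat{\mathbf{x}}^2$ together with the already-established Lemma \ref{L2.1}.

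For part (i), I would exploit the component formula $[E_\alpha]_{\beta\gamma}=-\epsilon_{\alpha\beta\gamma}$ and the contracted Levi-Civita identity $\epsilon_{\alpha\mu\nu}\epsilon_{\beta\mu\nu}=2\delta_{\alpha\beta}$ to compute $\langle E_\alpha,E_\beta\rangle_{\mathrm{F}}=\mathrm{tr}(E_\alpha^\top E_\beta)=2\delta_{\alpha\beta}$. Since $\widehat{\mathbf{x}}=x_\alpha E_\alpha$ by \eqref{B-4-0}, bilinearity of $\langle\cdot,\cdot\rangle_{\mathrm{F}}$ instantly yields $\langle\widehat{\mathbf{x}},E_\alpha\rangle_{\mathrm{F}}=2x_\alpha$ and $\langle\widehat{\mathbf{x}},\widehat{\mathbf{x}}\rangle_{\mathrm{F}}=2\|\mathbf{x}\|^2=2\theta^2$.

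For part (ii), the key preparatory identity is
\[
\widehat{\mathbf{x}}^2 = \mathbf{x}\mathbf{x}^\top - \theta^2 I,
\]
which one obtains from $\epsilon_{\alpha ik}\epsilon_{\beta kj}=\delta_{\alpha j}\delta_{i\beta}-\delta_{\alpha\beta}\delta_{ij}$ (or equivalently by checking on the three basis elements $E_1,E_2,E_3$). With this, $\langle\widehat{\mathbf{x}}^2,\widehat{\mathbf{x}}^2\rangle_{\mathrm{F}}=\mathrm{tr}((\mathbf{x}\mathbf{x}^\top-\theta^2 I)^2)=\theta^4-2\theta^4+3\theta^4=2\theta^4$ is immediate.

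The most efficient route to the remaining two identities is to differentiate. Lemma \ref{L2.1} gives $\partial_\alpha\widehat{\mathbf{x}}^2=\{E_\alpha,\widehat{\mathbf{x}}\}$, so
\[
2\langle\widehat{\mathbf{x}}^2,\{E_\alpha,\widehat{\mathbf{x}}\}\rangle_{\mathrm{F}}=\partial_\alpha\langle\widehat{\mathbf{x}}^2,\widehat{\mathbf{x}}^2\rangle_{\mathrm{F}}=\partial_\alpha(2\theta^4)=8\theta^2 x_\alpha,
\]
yielding $\langle\widehat{\mathbf{x}}^2,\{E_\alpha,\widehat{\mathbf{x}}\}\rangle_{\mathrm{F}}=4x_\alpha\theta^2$. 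Differentiating once more and using $\partial_\beta\{E_\alpha,\widehat{\mathbf{x}}\}=\{E_\alpha,E_\beta\}$, I get
\[
2\langle\{E_\beta,\widehat{\mathbf{x}}\},\{E_\alpha,\widehat{\mathbf{x}}\}\rangle_{\mathrm{F}} + 2\langle\widehat{\mathbf{x}}^2,\{E_\alpha,E_\beta\}\rangle_{\mathrm{F}} = \partial_\beta(8\theta^2 x_\alpha)=16 x_\alpha x_\beta + 8\delta_{\alpha\beta}\theta^2.
\]
The only remaining ingredient is $\langle\widehat{\mathbf{x}}^2,\{E_\alpha,E_\beta\}\rangle_{\mathrm{F}}$, which I would compute via the auxiliary identity $E_\alpha E_\beta = e_\beta e_\alpha^\top - \delta_{\alpha\beta}I$ (a direct consequence of $\widehat{\mathbf{x}}^2=\mathbf{x}\mathbf{x}^\top-\theta^2 I$ specialized to the basis, verifiable by inspection on \eqref{B-4-0-0}). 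Combining it with $\widehat{\mathbf{x}}^2=\mathbf{x}\mathbf{x}^\top-\theta^2 I$ gives $\langle\widehat{\mathbf{x}}^2,\{E_\alpha,E_\beta\}\rangle_{\mathrm{F}}=2x_\alpha x_\beta+2\delta_{\alpha\beta}\theta^2$, and substituting back produces $\langle\{E_\alpha,\widehat{\mathbf{x}}\},\{E_\beta,\widehat{\mathbf{x}}\}\rangle_{\mathrm{F}}=6x_\alpha x_\beta+2\delta_{\alpha\beta}\theta^2$, as claimed.

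There is no real obstacle here; the only mildly delicate point is keeping the symmetric/antisymmetric bookkeeping straight (recall $\widehat{\mathbf{x}}^2$ and $\{E_\alpha,\widehat{\mathbf{x}}\}$ are symmetric while $E_\alpha$ and $\widehat{\mathbf{x}}$ are skew). Using the compact form $\widehat{\mathbf{x}}^2=\mathbf{x}\mathbf{x}^\top-\theta^2 I$ throughout avoids any index-juggling with $\epsilon$-symbols and keeps the computation transparent; the differentiation shortcut then spares us from computing $\langle\{E_\alpha,\widehat{\mathbf{x}}\},\{E_\beta,\widehat{\mathbf{x}}\}\rangle_{\mathrm{F}}$ by brute expansion of four cross terms.
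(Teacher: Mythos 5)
Your proof is correct, and for part (ii) it takes a genuinely different route from the paper. The paper's appendix proves the first identity of (ii) by brute-force summation of the squared entries of the explicit matrix $\hbx^2$, the second by the cyclic-trace manipulation $\langle \hbx^2,\{E_\alpha,\hbx\}\rangle_{\mathrm{F}}=2\,\mathrm{tr}(\hbx^3E_\alpha)$ combined with $\hbx^3=-\theta^2\hbx$, and the third by a four-index Levi-Civita contraction establishing $\mathrm{tr}(E_\iota E_\kappa E_\mu E_\nu)=\delta_{\iota\nu}\delta_{\kappa\mu}+\delta_{\iota\kappa}\delta_{\mu\nu}$. You instead package everything into the rank-one decomposition $\hbx^2=\bx\bx^\top-\theta^2 I$ (which is exactly the paper's displayed matrix for $\hbx^2$, read structurally) and then cascade the second and third identities out of the first by differentiating with $\partial_\alpha\hbx^2=\{E_\alpha,\hbx\}$ from Lemma \ref{L2.1}; the only extra ingredient, $\langle\hbx^2,\{E_\alpha,E_\beta\}\rangle_{\mathrm{F}}=2x_\alpha x_\beta+2\delta_{\alpha\beta}\theta^2$, again falls out of the rank-one form via $E_\alpha E_\beta=e_\beta e_\alpha^\top-\delta_{\alpha\beta}I$. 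I checked the arithmetic ($\theta^4-2\theta^4+3\theta^4=2\theta^4$, $\partial_\alpha(2\theta^4)=8\theta^2x_\alpha$, $\partial_\beta(8\theta^2x_\alpha)=16x_\alpha x_\beta+8\delta_{\alpha\beta}\theta^2$) and it all closes. Your route is shorter and avoids the $\epsilon$-symbol gymnastics entirely, at the cost of introducing the auxiliary identity $\hbx^2=\bx\bx^\top-\theta^2 I$, which is easy to verify but not stated in the paper; the paper's route is more mechanical but stays entirely within the lemmas it has already recorded. Either is acceptable.
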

\begin{proof}
(i)~By direct calculation, one can find
\[ \langle E_\alpha, E_\beta\rangle_{\mathrm{F}}=2\delta_{\alpha\beta}. \]
Then, one has 
\[
\langle\widehat{\mathbf{x}},\widehat{\mathbf{x}}\rangle_{\mathrm{F}}= x_\alpha x_\beta\langle E_\alpha, E_\beta\rangle_{\mathrm{F}}=2x_\alpha x_\alpha=2\theta ^2,
\]
and
\[
\langle \widehat{\mathbf{x}}, E_\alpha\rangle_{\mathrm{F}}= \langle x_\beta E_\beta, E_\alpha \rangle_{\mathrm{F}}=x_\beta \langle E_\beta, E_\alpha\rangle_{\mathrm{F}}=2x_\alpha.
\]
(ii) Since the derivations for identities in (ii) are rather lengthy, we present them in Appendix  \ref{appendix:L2.3}.
\end{proof}

\begin{proposition} \label{P2.2}
For $\bx \in \bbr^3$, let $\widehat{\mathbf{x}}$ be given by  \eqref{B-4-0} and $\theta = \|\bx\|$. Then, the metric tensor and its inverse are given as follows.
\begin{align*}
\begin{aligned}
& (i)~g_{\alpha \beta}(\mathbf{x}) =\left(\frac{2\cos\theta -2+\theta ^2}{\theta ^4}\right) x_\alpha x_\beta  +\frac{2(1-\cos\theta )}{\theta ^2}\delta_{\alpha \beta }, \\
& (ii)~g^{\alpha\beta}(\mathbf{x}) =\left( \frac{2\cos\theta -2+\theta ^2}{2\theta ^2(\cos\theta -1)}\right) x_\alpha x_\beta+\frac{\theta ^2}{2(1-\cos\theta )} \delta^{\alpha\beta}.
\end{aligned}
\end{align*}
\end{proposition}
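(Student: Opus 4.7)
The plan is to compute $g_{\alpha\beta}(\mathbf{x})$ directly from the coordinate basis vectors $\partial_\alpha R(\mathbf{x})$ already written in \eqref{B-6-1}, and then invert the resulting $3\times 3$ symmetric matrix algebraically using an educated ansatz. As a first step, I would exploit left-invariance of the Riemannian metric: writing $\partial_\alpha R(\mathbf{x}) = R_0 U_\alpha$ with $U_\alpha := \partial_\alpha \exp(x_\gamma E_\gamma)$, the definition \eqref{eqn:Rmetric} together with orthogonality of $R_0$ gives
\[
g_{\alpha\beta}(\mathbf{x}) = \frac{1}{2}\langle R_0 U_\alpha,\, R_0 U_\beta\rangle_{\mathrm{F}} = \frac{1}{2}\langle U_\alpha,\, U_\beta\rangle_{\mathrm{F}},
\]
so the problem reduces to a bilinear pairing among the four summands of \eqref{B-6-1}.

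The key structural observation that avoids expanding all sixteen cross terms is that the four summands split according to symmetry: the multiples of $\widehat{\mathbf{x}}$ and of $E_\alpha$ are skew-symmetric, while the multiples of $\widehat{\mathbf{x}}^2$ and of $\{E_\alpha, \widehat{\mathbf{x}}\}$ are symmetric. Since the Frobenius pairing of a symmetric with a skew-symmetric matrix vanishes, only eight terms survive, and they are precisely the inner products tabulated in Lemma \ref{L2.3}. Collecting coefficients of $x_\alpha x_\beta$ and $\delta_{\alpha\beta}$, the $\delta_{\alpha\beta}$ coefficient simplifies immediately to $(\sin^2\theta + (1-\cos\theta)^2)/\theta^2 = 2(1-\cos\theta)/\theta^2$ via $\sin^2\theta + \cos^2\theta = 1$, while the $x_\alpha x_\beta$ coefficient, after several pairwise cancellations of terms of the form $\pm 2\theta\cos\theta\sin\theta$ and $\pm 4\theta\sin\theta(1-\cos\theta)$, collapses to $(\theta^2 - \sin^2\theta - (1-\cos\theta)^2)/\theta^4 = (2\cos\theta - 2 + \theta^2)/\theta^4$. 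This proves part (i).

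For part (ii), I would set $A(\theta) := (2\cos\theta - 2 + \theta^2)/\theta^4$ and $B(\theta) := 2(1-\cos\theta)/\theta^2$, and use the ansatz $g^{\alpha\beta}(\mathbf{x}) = C(\theta)\,x_\alpha x_\beta + D(\theta)\,\delta^{\alpha\beta}$; this is the only form consistent with the rotational invariance of the metric in $\mathbf{x}$. Contracting $g_{\alpha\beta}\,g^{\beta\gamma}$ and using $x_\beta x_\beta = \theta^2$ yields
\[
g_{\alpha\beta}\,g^{\beta\gamma} = BD\,\delta_\alpha^\gamma + \bigl[(A\theta^2 + B)\,C + AD\bigr]\,x_\alpha x_\gamma.
\]
A crucial algebraic identity at this point is $A(\theta)\theta^2 + B(\theta) = 1$, which reflects geometrically the fact that in Riemannian normal coordinates the radial unit vector $\mathbf{x}/\theta$ has unit length. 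This reduces the consistency equations to $D = 1/B$ and $C = -A/B$, which immediately produce the stated formulas.

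The main obstacle is purely bookkeeping in part (i): the $x_\alpha x_\beta$ coefficient is an alternating sum of five trigonometric expressions built from $\theta\cos\theta - \sin\theta$, $\sin\theta$, $\theta\sin\theta - 2(1-\cos\theta)$, and $1-\cos\theta$, and one must expand carefully to see the telescoping. Once the skew/symmetric orthogonality has eliminated half the terms and the identity $A\theta^2 + B = 1$ has been noted, no further conceptual difficulty remains.
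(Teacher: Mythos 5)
Your proposal is correct and follows essentially the same route as the paper: part (i) reduces to the pairing of the skew-symmetric and symmetric parts of $\partial_\alpha\big(\exp(x_\beta E_\beta)\big)$ via the vanishing of the Frobenius product between symmetric and skew-symmetric matrices, then invokes the inner products of Lemma \ref{L2.3}; part (ii) uses the same ansatz $g^{\alpha\beta}=\mathcal{A}x_\alpha x_\beta+\mathcal{B}\delta^{\alpha\beta}$ and solves the resulting linear conditions. Your explicit observation that $A(\theta)\theta^2+B(\theta)=1$ is a pleasant shortcut the paper does not state, but it does not change the substance of the argument.
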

\begin{proof}
(i)~It follows from \eqref{eqn:Rmetric} and 
\[
 \partial_{\alpha}R(\mathbf{x})=R_0\partial_{\alpha}\big(\exp(x_\beta E_\beta)\big), \quad \mbox{where}~\partial_\alpha=\partial_{x_\alpha},
\]
that
\begin{align}
\begin{aligned} \label{NNN-1}
g_{\alpha \beta}(\bx) &=g(\partial_\alpha R(\mathbf{x}), \partial_\beta R(\mathbf{x})) = \frac{1}{2} \Big \langle\partial_\alpha R(\mathbf{x}), \partial_\beta R(\mathbf{x}) \Big \rangle_{\mathrm{F}}  \\
&= \frac{1}{2} \Big \langle \partial_{\alpha}\big(\exp(x_\beta E_\beta)\big), \partial_{\alpha}\big(\exp(x_\beta E_\beta)\big) \Big \rangle_{\mathrm{F}}
\end{aligned}
\end{align}
On the other hand, since  $E_\alpha$ and $\widehat{\mathbf{x}}$ are skew-symmetric, $\{E_\alpha, \widehat{\mathbf{x}}\}$ is also symmetric. Now, we decompose  \eqref{B-6-1} in skew-symmetric and symmetric parts:
\begin{equation} \label{NNN-2}
\partial_{\alpha}\big(\exp(x_\beta E_\beta)\big)=A_\alpha+S_\alpha,
\end{equation}
where $A_\alpha$ and  $S_\alpha$ are skew-symmetric and symmetric matrices, respectively. One finds:
\begin{align*}
A_\alpha&=\left(\frac{ \theta \cos\theta -\sin\theta }{\theta ^3}\right)x_\alpha\widehat{\mathbf{x}}+\frac{\sin\theta }{\theta } E_\alpha,\\[5pt]
S_\alpha&=\left(\frac{\theta \sin\theta -2(1-\cos\theta )}{\theta ^4}\right)x_\alpha\widehat{\mathbf{x}}^2+\frac{1-\cos\theta }{\theta ^2}\{E_\alpha, \widehat{\mathbf{x}}\}.
\end{align*}
Next, we combine \eqref{NNN-1} and \eqref{NNN-2} to find
\begin{equation} \label{eqn:gab}
g_{\alpha\beta}(\bx) =\frac{1}{2} \langle\partial_\alpha R(\mathbf{x}), \partial_\beta R(\mathbf{x})\rangle_{\mathrm{F}} =\frac{1}{2} \langle A_\alpha, A_\beta\rangle_{\mathrm{F}}+ \frac{1}{2}\langle S_\alpha, S_\beta\rangle_{\mathrm{F}},
\end{equation}
where we used that $\langle A, S\rangle_{\mathrm{F}}=0$ for any skew-symmetric matrix $A$ and symmetric matrix $S$.  For the estimation of the two terms in the R.H.S. of \eqref{eqn:gab}, we use Lemma \ref{L2.3}.  \newline

\noindent $\bullet$~Estimate of $\langle A_\alpha, A_\beta\rangle_{\mathrm{F}}$:  We use Lemma \ref{L2.3} (i) to find:
\begin{align}
\begin{aligned} \label{eqn:A-prod}
&\langle A_\alpha, A_\beta\rangle_{\mathrm{F}} \\
&=\left(\frac{\theta \cos\theta -\sin\theta}{\theta ^3}\right)^2x_\alpha x_\beta\langle \widehat{\mathbf{x}}, \widehat{\mathbf{x}}\rangle_{\mathrm{F}}  \\
&\quad+\left(\frac{\theta \cos\theta -\sin\theta}{\theta ^3}\right) \frac{\sin\theta }{\theta }\big(x_\alpha\langle \widehat{\mathbf{x}}, E_\beta\rangle_{\mathrm{F}}+x_\beta\langle \widehat{\mathbf{x}}, E_\alpha\rangle_{\mathrm{F}} \big) +\left(\frac{\sin\theta }{\theta }\right)^2\langle E_\alpha, E_\beta\rangle_{\mathrm{F}} \\
&=2\left(\frac{\theta \cos\theta -\sin\theta}{\theta ^2}\right)^2x_\alpha x_\beta
+4\left(\frac{\theta \cos\theta -\sin\theta}{\theta ^3}\right)\frac{\sin\theta }{\theta } x_\alpha x_\beta
+2\left(\frac{\sin\theta }{\theta }\right)^2\delta_{\alpha \beta} \\
& = 2\left(\frac{\theta^2 \cos^2 \theta - \sin^2 \theta}{\theta^4}\right) x_\alpha x_\beta + 2\left(\frac{\sin\theta }{\theta }\right)^2\delta_{\alpha \beta}.
\end{aligned}
\end{align}

\vspace{0.2cm}

\noindent $\bullet$~Estimate of $\langle S_\alpha, S_\beta\rangle_{\mathrm{F}}$:~We use Lemma \ref{L2.3} (ii)  to find:
\begin{align*}
\begin{aligned}
&\langle S_\alpha, S_\beta\rangle_{\mathrm{F}} \\
&=\left(\frac{\theta \sin\theta -2(1-\cos\theta )}{\theta ^4}\right)^2x_\alpha x_\beta \langle\widehat{\mathbf{x}}^2,\widehat{\mathbf{x}}^2\rangle_{\mathrm{F}} +\left(\frac{1-\cos\theta }{\theta ^2}\right)^2\langle \{E_\alpha, \widehat{\mathbf{x}}\}, \{E_\beta, \widehat{\mathbf{x}}\}\rangle_{\mathrm{F}}\\
&\quad +\left(\frac{\theta \sin\theta -2(1-\cos\theta )}{\theta ^4}\right)\frac{1-\cos\theta }{\theta ^2}(x_\alpha\langle \widehat{\mathbf{x}}^2, \{E_\beta, \widehat{\mathbf{x}}\}\rangle_{\mathrm{F}}+x_\beta\langle \widehat{\mathbf{x}}^2, \{E_\alpha, \widehat{\mathbf{x}}\}\rangle_{\mathrm{F}})\\
&=2\left(\frac{\theta \sin\theta -2(1-\cos\theta )}{\theta ^2}\right)^2x_\alpha x_\beta + \left(\frac{1-\cos\theta }{\theta ^2}\right)^2(6x_\alpha x_\beta+2\delta_{\alpha \beta}\theta ^2) \\
& \quad + \left(\frac{\theta \sin\theta -2(1-\cos\theta )}{\theta ^2}\right)\frac{1-\cos\theta }{\theta ^2}(8x_\alpha x_\beta).
\end{aligned}
\end{align*}
By grouping similar terms, one has 
\begin{equation}
\label{eqn:S-prod}
\langle S_\alpha, S_\beta\rangle_{\mathrm{F}} = 2\left(\frac{\theta^2 \sin^2 \theta - (1-\cos \theta)^2}{\theta^4}\right) x_\alpha x_\beta + 2\, \frac{(1-\cos \theta)^2}{\theta^2} \delta_{\alpha \beta}
\end{equation}
Finally, we combine \eqref{eqn:gab}, \eqref{eqn:A-prod} and \eqref{eqn:S-prod} to obtain 
\begin{equation}
\label{eqn:metric}
g_{\alpha \beta}(\mathbf{x})=\left(\frac{2\cos\theta -2+\theta ^2}{\theta ^4}\right) x_\alpha x_\beta  +\frac{2(1-\cos\theta )}{\theta ^2}\delta_{\alpha \beta }.
\end{equation}
(ii)~We postpone the calculation of the inverse of $(g_{\alpha \beta}(\bx))$ to Appendix \ref{appendix:metric-inv}. 
\end{proof}
%%%%%

\subsection{Christoffel symbols and geodesic equations.} \label{sec:2.4}
A direct calculation in Appendix \ref{appendix:Christoffel} and formula for the Christoffel symbols
\[
\Gamma^\gamma_{\alpha \beta}=\frac{1}{2}g^{\gamma \kappa}(\partial_\beta g_{\kappa \alpha}+\partial_\alpha g_{\kappa \beta}-\partial_\kappa g_{\alpha \beta}),
\]
yield
\begin{align}
\begin{aligned}
\label{eqn:Christoffel}
\Gamma^\gamma_{\alpha \beta}(\bx)&=  \frac{(\sin \theta + \theta)(\cos \theta -1) + \theta^2 \sin \theta}{\theta^5(\cos \theta -1)}  x_\alpha x_\beta x_\gamma \\[3pt]
&\quad + \frac{2\cos\theta -2+\theta \sin\theta}{2 \theta^2 (1-\cos\theta )}(x_\alpha\delta_\beta^\gamma+x_\beta\delta_\alpha^\gamma) - \frac{\sin\theta -\theta}{\theta ^3} \delta_{\alpha\beta}x_\gamma.
\end{aligned}
\end{align}
One advantage of working with exponential coordinates is that geodesics are images of straight lines under the parametrization map. To check the expressions \eqref{eqn:Christoffel}, it can be shown that for a fixed vector $\bu \in \bbr^3$, the straight line $\bx(t) = t \bu$ satisfies the geodesic equations
\[
\frac{d^2 x_\gamma}{dt^2}+\Gamma^\gamma_{\alpha \beta} (\bx(t))\frac{d x_\alpha}{dt}\frac{d x_\beta}{dt}=0, 
\]
for all $\gamma=1,2,3$. The second derivative is zero, so we only need to check 
\begin{equation}
\label{eqn:geod-check}
\Gamma^\gamma_{\alpha \beta} (\bx(t)) u_\alpha u_\beta = 0.
\end{equation}
Detailed arguments will be given in Appendix \ref{appendix:geod}.
 
%%%%%%%%%%%%%%%
 
\section{Parallel transport on $\mathrm{SO}(3)$} \label{sec:3}
\setcounter{equation}{0}
 
In this section we derive an explicit formula for the parallel transport on $\mathrm{SO}(3)$ along a geodesic, and provide its geometric interpretation. 

%%%%%

\subsection{Parallel transport} \label{subsect:pt-coord} Consider a geodesic curve on $\mathrm{SO}(3)$ given in exponential coordinates by $x_\alpha (t)=t u_\alpha$, for a fixed $\bu =(u_1,u_2,u_3) \in \bbr^3$, and the parallel transport along this geodesic of a vector of components $v_\alpha(t)$.  Then, the equations for parallel transport are given by
\[
\frac{dv_\gamma}{dt}+\Gamma_{\alpha \beta}^\gamma(\bx(t)) \dot{x}_\alpha v_\beta=0, \qquad \gamma=1,2,3.
\]
Since $\dot{x}_\alpha = u_\alpha$ along the geodesic, we can write the equations as
\begin{equation}
\label{eqn:pt-comp}
\dot{v}_\gamma+\Gamma_{\alpha \beta}^\gamma(\bx(t)) u_\alpha v_\beta=0, \qquad \gamma = 1,2,3.
\end{equation}
 Next, we calculate $\Gamma^\gamma_{\alpha \beta}(\bx(t)) u_\alpha$ with the Christoffel symbols given by \eqref{eqn:Christoffel}. By direct substitution of $\bx(t)=t \bu$ in \eqref{eqn:Christoffel} (in particular, use $\|\bx(t)\| = \|\bu\| t$), we obtain
\begin{align}
\begin{aligned} \label{AC-1}
&\Gamma^\gamma_{\alpha \beta}(\bx(t)) u_\alpha \\
& \hspace{0.2cm} =  \frac{(\sin (\|\bu\| t) +  t \|\bu\|)(\cos  (\|\bu\| t) -1) + t^2 \|\bu\|^2 \sin  (\|\bu\| t)}{t^5 \|\bu\|^5 (\cos  (\|\bu\| t) -1)}  t^3 u_\alpha u_\beta u_\gamma u_\alpha\\[3pt]
& \hspace{0.2cm}  + \frac{2\cos  (\|\bu\| t) -2+ t \|\bu\| \sin  (\|\bu\| t)}{2 t^2 \|\bu\|^2 (1-\cos  (\|\bu\| t) )}(t u_\alpha\delta_\beta^\gamma+t u_\beta\delta_\alpha^\gamma) u_\alpha- \frac{\sin  (\|\bu\| t) - t\|\bu\|}{t^3 \|\bu\|^3} \delta_{\alpha\beta} t u_\gamma u_\alpha \\[3pt]
& \hspace{0.2cm} = \frac{2\cos(\|\bu\| t)-2+t \|\bu\|\sin(\|\bu\| t)}{2t(1-\cos(\|\bu\| t))} \left(\delta_\beta^\gamma-\frac{u_\beta u_\gamma}{\| \bu \|^2}\right),
\end{aligned}
\end{align}
where the second equal sign follows from elementary algebra and $u_\alpha u_\alpha = \|\bu\|^2$. 

For calculations in this section we reset the notation for $\theta$ used in Section \ref{sec:3}, and denote:
\[
\theta:=\|\mathbf{u}\|,\quad \mathbf{n}:=\frac{\mathbf{u}}{\theta}.
\]
With this notation, rewrite \eqref{AC-1} as
\[
\Gamma_{\alpha \beta}^\gamma (\bx(t) )u_\alpha=\frac{2\cos(\theta t)-2+(\theta t)\sin(\theta t)}{2t(1-\cos(\theta t))}\left(\delta_\beta^\gamma-\frac{u_\beta u_\gamma}{\theta^2}\right).
\]
Then, we use the above expression in the equations \eqref{eqn:pt-comp} for parallel transport to find
\begin{align}
\begin{aligned}\label{B-9}
0&=\dot{v}_\gamma+\left(\frac{2\cos(\theta t)-2+(\theta t)\sin(\theta t)}{2t(1-\cos(\theta t))}\right)\left(\delta_\beta^\gamma-\frac{u_\beta u_\gamma}{\theta^2}\right)v_\beta\\
&=\dot{v}_\gamma+\left(\frac{2\cos(\theta t)-2+(\theta t)\sin(\theta t)}{2t(1-\cos(\theta t))}\right)\left(v_\gamma-\frac{u_\gamma}{\theta^2}(u_\beta v_\beta)\right).
\end{aligned}
\end{align}

Multiply both sides of \eqref{B-9} by $u_\gamma$ and sum  up the resulting relation over $\gamma$ to obtain
\begin{align*}
0&=\dot{v}_\gamma u_\gamma+\left(\frac{2\cos(\theta t)-2+(\theta t)\sin(\theta t)}{2t(1-\cos(\theta t))}\right) \underbrace{\left(v_\gamma u_\gamma-\frac{u_\gamma u_\gamma}{\theta^2}(u_\beta v_\beta)\right)}_{=0}.
%&=\frac{d}{dt}(v_\gamma u_\gamma)+\left(\frac{2\cos(\theta t)-2+(\theta t)\sin(\theta t)}{2t(1-\cos(\theta t))}\right)\left(v_\gamma u_\gamma-u_\beta v_\beta \right)\\
%&=\frac{d}{dt}(v_\gamma u_\gamma).
\end{align*}
The second term in the R.H.S. above vanishes, and the vector $\bu$ is constant. This leads to 
\[ \frac{d}{dt}(v_\gamma u_\gamma) =0, \]
and hence $v_\gamma(t)u_\gamma$ is a conserved quantity:
\begin{equation}
\label{eqn:calC}
\mathcal{C}=v_\gamma(t)u_\gamma=v_\gamma(0)u_\gamma,
\end{equation}
for a constant $\calC$. The conservation property in \eqref{B-9} implies
\begin{align}\label{B-10}
0=\dot{v}_\gamma(t)+\left(\frac{2\cos(\theta t)-2+(\theta t)\sin(\theta t)}{2t(1-\cos(\theta t))}\right)\left(v_\gamma(t)-\frac{\mathcal{C}u_\gamma}{\theta^2}\right).
\end{align}

To find the exact solution of \eqref{B-10}, we define the function $f$ by
\begin{equation}
\label{eqn:f}
f(t) :=\begin{cases}
\displaystyle\sqrt{\frac{1-\cos(\theta t)}{t^2}},\quad&\text{if}\quad t>0, \\[5pt]
\displaystyle\frac{\theta}{\sqrt{2}},&\text{if}\quad t=0.
\end{cases}
\end{equation}
Here we can easily check that $f$ is a $C^1$ function defined on $[0, \infty)$, and $f$ satisfies
\[
\frac{f'(t)}{f(t)}=\frac{2\cos(\theta t)-2+(\theta t)\sin(\theta t)}{2t(1-\cos(\theta t)},
\]
so one can write \eqref{B-10} as
\[
0=f(t)\dot{v}_\gamma(t)+f'(t)\left(v_\gamma(t)-\frac{\mathcal{C}u_\gamma}{\theta^2}\right).
\]
Furthermore, we have
\[
0=\frac{d}{dt}\left(f(t)v_\gamma(t)-\frac{\mathcal{C}u_\gamma f(t)}{\theta^2}\right).
\]
This implies 
\[
f(t)v_\gamma(t)-\frac{\mathcal{C}u_\gamma f(t)}{\theta^2}=\lim_{\tau\to 0}\left(f(\tau)v_\gamma(\tau)-\frac{\mathcal{C}u_\gamma f(\tau)}{\theta^2}\right)=\frac{\theta}{\sqrt{2}}v_\gamma(0)-\frac{\mathcal{C} u_\gamma}{\sqrt{2}\theta},
\]
which yields the explicit solution of the parallel transport equations \eqref{eqn:pt-comp}:
\begin{equation}
\label{eqn:pt-sol}
v_\gamma(t)=\frac{1}{f(t)}\left(\frac{\theta}{\sqrt{2}}v_\gamma(0)-\frac{\mathcal{C} u_\gamma}{\sqrt{2}\theta}\right)+\frac{\mathcal{C}u_\gamma}{\theta^2},
\end{equation} 
with $\theta = \|\bu \|$ and $\mathcal{C}$ given by \eqref{eqn:calC}.

%%%%%

\subsection{Coordinate-free closed form expression} \label{subsect:pt-coord-free} We use the explicit formula \eqref{eqn:pt-sol} to derive a coordinate free expression for parallel transport along a geodesic. We fix $R_0,R_1 \in \mathrm{SO}(3)$, with $d(R_0,R_1) <\pi$, and consider the unique geodesic path between them. We set exponential coordinates centered at $R_0$, and also set $\bu \in \bbr^3$ by
\[
\hbu = \log(R_0^\top  R_1).
\]
Equivalently one has 
\[ \log_{R_0}R_1= R_0 \hbu. \]
The geodesic curve between the two matrices is given by 
\[ \calR:[0,1] \to \mathrm{SO}(3), \quad  \calR(t) = R_0 \exp(t \hbu). \]
 In particular, $\theta = \| \bu\|$ represents the geodesic distance between $R_0$ and $R_1$ (see \eqref{eqn:geod-dist}):
\[
 \theta =  d(R_0,R_1) = \operatorname{acos}\left(\frac{\operatorname{tr}(R_0^\top  R_1) -1}{2} \right).
\]
Also, by \eqref{eqn:Rtotv}, the unit vector $\bn=\frac{\bu}{\theta}$ satisfies:
\[
\hbn = \frac{1}{2\sin\theta}(R_0^\top  R_1-R_1^\top  R_0).
\]

For notational convenience, we adopt the following notation for the tangent vectors in exponential coordinates (see \eqref{eqn:pR-alpha} and \eqref{B-6-1}):
\[
X_\alpha(R(\bx)) := \partial_\alpha R(\bx).
\]
Then, it follows from \eqref{eqn:pR-alpha} and \eqref{B-6-1} that 
\begin{align} 
\begin{aligned} \label{eqn:XalphaR1}
& X_\alpha(R_0)=R_0E_\alpha \quad \mbox{and} \\[3pt]
& X_\alpha(R_1)  =R_0\partial_\alpha \left(\exp\left(x_\beta E_\beta\right)\right)\big|_{\mathbf{x}=\mathbf{u}}\\[3pt]
& \hspace{0.2cm}=\left(\frac{-\sin\theta+\theta\cos\theta}{\theta^3}\right)u_\alpha R_0\widehat{\mathbf{u}}+\frac{\sin\theta}{\theta} R_0 E_\alpha +\left(\frac{\theta\sin\theta-2(1-\cos\theta)}{\theta^4}\right)u_\alpha R_0\widehat{\mathbf{u}}^2 \\
& \hspace{0.2cm} +\frac{1-\cos\theta}{\theta^2}R_0\{E_\alpha, \widehat{\mathbf{u}}\}.
\end{aligned}
\end{align}

Now take a tangent vector $V(0) \in T_{R_0}  \mathrm{SO}(3)$ given in components by
\begin{equation}
\label{eqn:V0}
V(0)=v_\alpha(0)X_\alpha(R_0).
\end{equation}
According to the calculations performed in Section \ref{subsect:pt-coord} (see equation \eqref{eqn:pt-sol}), the parallel transport of $V(0)$ from $R_0$ to $R_1$ along the geodesic curve is  
\begin{equation}
\label{eqn:V1}
V(1)=v_\alpha(1)X_\alpha(R_1),
\end{equation}
where
\[
v_\alpha(1)=\frac{1}{f(1)}\left(\frac{\theta}{\sqrt{2}}v_\alpha(0)-\frac{\mathcal{C} u_\alpha}{\sqrt{2}\theta}\right)+\frac{\mathcal{C}u_\alpha}{\theta^2}.
\]
Here, $\calC$ is the constant of motion given by \eqref{eqn:calC}. Using  \eqref{eqn:f} and some simple manipulations we write the components of $V(1)$ as
\begin{align}
\label{eqn:v1-mod}
v_\alpha(1) =\sqrt{\frac{1}{1-\cos\theta}}\left(\frac{\theta}{\sqrt{2}}v_\alpha(0)-\frac{\mathcal{C} u_\alpha}{\sqrt{2}\theta}\right)+\frac{\mathcal{C}u_\alpha}{\theta^2} =\frac{\theta v_\alpha(0)}{2\sin\frac{\theta }{2}}+\mathcal{C}u_\alpha\left(\frac{1}{\theta^2}-\frac{1}{2\theta\sin\frac{\theta}{2}}\right).
\end{align}

One can then use $\eqref{eqn:XalphaR1}_2$ and \eqref{eqn:v1-mod} in \eqref{eqn:V1} (see calculations in Appendix \ref{appendix:pt-closed}) to find that the parallel-transported vector $V(1)$ has the following coordinate-free expression:
\begin{align}
\begin{aligned}\label{B-11}
V(1)&=\mathcal{C}R_0\widehat{\mathbf{u}}\left(\frac{\cos\theta-\cos\frac{\theta}{2}}{\theta^2} \right)+\mathcal{C}R_0\widehat{\mathbf{u}}^2\left(\frac{\sin\theta-2\sin\frac{\theta}{2}}{\theta^3}\right)\\
&\quad +\frac{\sin\frac{\theta}{2}}{\theta}\left(V(0)\widehat{\mathbf{u}}+R_0\widehat{\mathbf{u}}R_0^\top V(0)\right)+\cos\frac{\theta }{2}V(0).
\end{aligned}
\end{align}

%%%%%

\subsection{A geometric interpretation}\label{sec:3.3}
By \eqref{eqn:tspace}, the tangent vectors $V(0)$ and $V(1)$ take the following form: 
\[
V(0) = R_0 A_0, \qquad \text{ and } \qquad V(1) = R_1 A_1,
\]
where $A_0$ and $A_1$ are skew-symmetric matrices.  To provide better insight, which leads to a nice geometric interpretation of parallel transport, we now express \eqref{B-11} in terms of these skew-symmetric matrices. 

Therefore, we set
\begin{equation}
\label{eqn:A0A1}
A_0=R_0^\top V(0), \qquad A_1=R_1^\top V(1),
\end{equation}
and also define $\veeA_0$ and $\veeA_1$ such that $\widehat{\veeA}_0=A_0$ and $\widehat{\veeA}_1=A_1$
For convenience of notation, denote
\[
R=R_0^\top R_1.
\]
Then, it follows from \eqref{B-11} that 
\begin{align}
\label{eqn:A1-int}
\begin{aligned}
A_1&=\mathcal{C}R^\top \widehat{\mathbf{u}}\left(\frac{\cos\theta-\cos\frac{\theta}{2}}{\theta^2} \right)+\mathcal{C}R^\top \widehat{\mathbf{u}}^2\left(\frac{\sin\theta-2\sin\frac{\theta}{2}}{\theta^3}\right)\\
&\quad +\frac{\sin\frac{\theta}{2}}{\theta}R^\top \left(A_0\widehat{\mathbf{u}}+\widehat{\mathbf{u}}A_0\right)+\cos\frac{\theta }{2}R^\top A_0.
\end{aligned}
\end{align}
We take  the transpose in Rodrigues's formula \eqref{eqn:tvtoR} (applied to $R=R_0^\top  R_1$) to get 
\begin{equation}
\label{eqn:RT}
R^\top =I-\frac{\sin\theta}{\theta}\widehat{\mathbf{u}}+\frac{1-\cos\theta}{\theta^2}\widehat{\mathbf{u}}^2.
\end{equation}
Hence, one has
\begin{equation}
\label{eqn:RT-cont}
R^\top \widehat{\mathbf{u}}=\cos\theta \widehat{\mathbf{u}}-\frac{\sin\theta}{\theta}\widehat{\mathbf{u}}^2, \qquad \text{ and } \qquad
R^\top \widehat{\mathbf{u}}^2=\cos\theta \widehat{\mathbf{u}}^2+\theta\sin\theta\widehat{\mathbf{u}},
\end{equation}
where we used that $\widehat{\mathbf{u}}^3=-\theta^2\widehat{\mathbf{u}}$ (see Lemma \ref{L2.1}).  

By using \eqref{eqn:RT} and  \eqref{eqn:RT-cont} in \eqref{eqn:A1-int}, after some manipulations (see Appendix \ref{appendix:A1}), we derive the following expression for $A_1$:
\begin{align}
\begin{aligned} \label{B-12}
A_1&=\mathcal{C}\left(\frac{1-\cos\theta\cos\frac{\theta}{2}-2\sin\theta\sin\frac{\theta}{2}}{\theta^2}\right)\widehat{\mathbf{u}}
+\frac{\sin\frac{\theta}{2}}{\theta}\left(A_0\widehat{\mathbf{u}}-\widehat{\mathbf{u}}A_0\right) \\
&\quad -\frac{2\sin^2\frac{\theta}{2}\cos\frac{\theta}{2}}{\theta^2}\widehat{\mathbf{u}}A_0\widehat{\mathbf{u}}
+\cos\frac{\theta }{2}A_0.
\end{aligned}
\end{align}

The expression for $A_1$ can be further simplified using the following lemma.
\begin{lemma}
\label{lem:various}
Let $\bx, \by \in \bbr^3$ and their corresponding skew-symmetric matrices $\hbx,\hby \in \mathfrak{so}(3)$. The following identities hold:
\begin{equation} \label{eqn:id1}
\Tr(\hbx \hby) = - 2 \, \bx \cdot \by, \qquad \hbx \hby - \hby \hbx = \widehat{\bx \times \by}, \qquad \hbx \hby \hbx = - (\bx \cdot \by) \hbx.
\end{equation}
\end{lemma}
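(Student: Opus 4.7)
The three identities are all classical consequences of the isomorphism between $(\bbr^3, \times)$ and $(\mathfrak{so}(3), [\cdot,\cdot])$ given by the hat map \eqref{B-4-0}, so my plan is to reduce each one to a short direct computation in the basis $\{E_1, E_2, E_3\}$ or, where cleaner, to the action $\hbx \bv = \bx \times \bv$ on $\bbr^3$.

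For the first identity, I would write $\hbx = x_\alpha E_\alpha$ and $\hby = y_\beta E_\beta$ and compute
\[
\Tr(\hbx \hby) = x_\alpha y_\beta \Tr(E_\alpha E_\beta).
\]
Since $E_\alpha^\top = -E_\alpha$, the Frobenius inner product from Lemma \ref{L2.3}(i) gives $\Tr(E_\alpha E_\beta) = -\Tr(E_\alpha^\top E_\beta) = -\langle E_\alpha, E_\beta\rangle_{\mathrm{F}} = -2\delta_{\alpha\beta}$, from which $\Tr(\hbx\hby) = -2 x_\alpha y_\alpha = -2\, \bx\cdot\by$ follows immediately.

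For the second identity, I would verify the Lie bracket relations $[E_\alpha, E_\beta] = \epsilon_{\alpha\beta\gamma} E_\gamma$ (which is an immediate entry-by-entry check using $[E_\alpha]_{\beta\gamma} = -\epsilon_{\alpha\beta\gamma}$) and then extend bilinearly:
\[
\hbx \hby - \hby \hbx = x_\alpha y_\beta [E_\alpha, E_\beta] = x_\alpha y_\beta\, \epsilon_{\alpha\beta\gamma} E_\gamma = (\bx \times \by)_\gamma E_\gamma = \widehat{\bx\times \by}.
\]

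For the third identity, rather than expanding in components (which is doable but tedious), the slickest route is to apply both sides to an arbitrary test vector $\bv \in \bbr^3$ and use the fact that $\hbx \bv = \bx \times \bv$. The right-hand side acts as $-(\bx\cdot\by)(\bx\times\bv)$. The left-hand side is a nested cross product, which I would unwind via the BAC$-$CAB rule:
\[
\hbx \hby \hbx \bv = \bx \times \bigl(\by \times (\bx\times \bv)\bigr) = \bx \times \bigl[(\by\cdot \bv)\bx - (\by\cdot\bx)\bv\bigr] = -(\bx\cdot\by)(\bx\times\bv),
\]
using that $\bx\times\bx = 0$. Since this equality of vectors holds for every $\bv$, the matrices $\hbx\hby\hbx$ and $-(\bx\cdot\by)\hbx$ agree. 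None of the three steps presents a real obstacle; the only thing to be careful about is the sign in the trace identity, which comes precisely from the skew-symmetry of the $E_\alpha$'s and differentiates $\Tr(E_\alpha E_\beta)$ from the Frobenius pairing computed in Lemma \ref{L2.3}(i).
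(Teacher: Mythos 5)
Your proof is correct, but it takes a genuinely different route from the one in the paper. The paper's Appendix \ref{proofL:various} proceeds by brute force: it writes out the full $3\times 3$ matrix $\hbx\hby$ entry by entry, reads the trace off the diagonal for the first identity, subtracts the reversed product for the second, and multiplies by $\hbx$ once more for the third. You instead work structurally: the trace identity is reduced to the orthogonality relation $\langle E_\alpha,E_\beta\rangle_{\mathrm{F}}=2\delta_{\alpha\beta}$ already recorded in Lemma \ref{L2.3}(i) (with the correct sign flip $\Tr(E_\alpha E_\beta)=-\Tr(E_\alpha^\top E_\beta)$ coming from skew-symmetry), the commutator identity follows from the structure constants $[E_\alpha,E_\beta]=\epsilon_{\alpha\beta\gamma}E_\gamma$ by bilinearity, and the triple product is handled by letting both sides act on a test vector via $\hbx\bv=\bx\times\bv$ and invoking the BAC$-$CAB expansion together with $\bx\times\bx=0$. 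All three steps check out (in particular $x_\alpha y_\beta\epsilon_{\alpha\beta\gamma}=(\bx\times\by)_\gamma$ and the sign bookkeeping in the vector triple product are right). What your approach buys is brevity, reuse of machinery the paper has already established, and an explanation of \emph{why} each identity holds — it exhibits the hat map as a Lie algebra isomorphism intertwining $\times$ with the commutator, and $\hbx$ as the cross-product operator. What the paper's computation buys is complete self-containedness with no reliance on the Levi-Civita calculus or the identity $\hbx\bv=\bx\times\bv$; the only ingredient you use that is not literally stated in the paper is that last fact, which is an immediate one-line consequence of the definition \eqref{eqn:hat} and is harmless to cite.
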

\begin{proof}
See Appendix \ref{proofL:various}.
\end{proof}

%We set vectors $\az,\ao \in \bbr^3$ such that \[ A_0 = \widehat{\az} \quad \mbox{and} \quad A_1=\widehat{\ao}. \]

First note that by $\eqref{eqn:XalphaR1}_1$, \eqref{eqn:V0} and \eqref{eqn:A0A1}, we have:
\[
A_0 = v_\alpha(0) R_0^\top X_\alpha(R_0) = v_\alpha(0) E_\alpha,
\]
and hence, by \eqref{eqn:calC},
\[
\mathcal{C} = u _\alpha v_\alpha(0) =  \frac{1}{2} \langle \hbu, A_0 \rangle_{\mathrm{F}}.
\]
Then, it follows from $\eqref{eqn:id1}_1$ and $\eqref{eqn:id1}_3$ that 
\[
\calC = -\frac{1}{2} \Tr(\hbu A_0) = \bu \cdot \veeA_0, \qquad \text{ and } \qquad
\hbu A_0 \hbu = - ( \bu \cdot \veeA_0) \bu.
\]
%Now, we use the formulas
%\[
%\hbu A_0 \hbu = - (\bu \cdot \az) \hbu, \quad \calC = \bu \cdot \az,
%\%]
Using the equations above, combine the first and third terms in the R.H.S. of \eqref{B-12} to get
\begin{align}
\begin{aligned} \label{B-12-mod}
A_1&= \mathcal{C}\left(\frac{1-\cos\theta\cos\frac{\theta}{2}-\sin\theta\sin\frac{\theta}{2}}{\theta^2}\right)\widehat{\mathbf{u}}
+\frac{\sin\frac{\theta}{2}}{\theta}\left(A_0\widehat{\mathbf{u}}-\widehat{\mathbf{u}}A_0\right)
+\cos\frac{\theta }{2}A_0 \\
&= (\bn \cdot \veeA_0) \left(1-\cos\frac{\theta}{2}\right)\widehat{\bn}
+\sin\frac{\theta}{2} \left(A_0 \hbn-\hbn A_0\right)
+\cos\frac{\theta }{2}A_0,
\end{aligned}
\end{align}
where we also used
\[
\cos\theta\cos\frac{\theta}{2}+\sin\theta\sin\frac{\theta}{2}= \cos \frac{\theta}{2}, \quad \text{ and } \quad \bn = \frac{\bu}{\theta}.
\]

Using $\eqref{eqn:id1}_2$, we can write \eqref{B-12-mod} as:
\begin{equation}
\label{B-12-simple}
A_1 =\left(1-\cos\frac{\theta}{2}\right)  (\bn \cdot \veeA_0)\, \widehat{\mathbf{n}}
+\sin\frac{\theta}{2}\,  \widehat{\veeA_0 \times \bn} +\cos\frac{\theta }{2} A_0.
\end{equation}
By applying the vee operator to \eqref{B-12-simple} we then find:
\begin{equation}
\label{B-12-geom}
\veeA_1=\left(1-\cos\frac{\theta}{2}\right)  (\bn \cdot \veeA_0)\, \mathbf{n}
+\sin\frac{\theta}{2}\, \veeA_0 \times \bn +\cos\frac{\theta }{2} \, \veeA_0.
\end{equation}

\begin{remark} 
\label{rmk:pt-geom}Remarkably, \eqref{B-12-geom} is in the form of Rodrigues' rotation formula, with $\veeA_1$ representing the rotation (according to the right-hand-rule) of $\veeA_0$ about the axis $-\bn$ by an angle $\frac{\theta}{2}$. For the case in which $\veeA_0$ is in the direction of $\bn$, we have $ \veeA_1=\veeA_0, $ as expected.
\end{remark}
\medskip

{\em Parallel transport by general theory on Lie groups.} In \cite[Chapter 2]{He}, listed as an exercise, one can find a general expression for parallel transport along geodesics on connected Lie groups. Specifically, consider a connected Lie group $G$ with Lie algebra $\mathfrak{g}$, endowed with an affine connection that is invariant under left and right translations and under the map $g \mapsto g^{-1}$. For $g\in G$, denote by $L_g$ and $R_g$ the left and right translations by $g$, respectively. Take $X,Y \in \mathfrak{g}$. The parallel translate of $X$ along the geodesic $\exp(t Y)$, $0 \leq t \leq 1$, is given by:
\[
DL_{\exp \left(\frac{1}{2}Y\right)} DR_{\exp \left(\frac{1}{2}Y \right)} X.
\]

The rotation group and its connection satisfy the assumptions to apply the formula for parallel transport above. Adapting it to the setup of $\mathrm{SO}(3)$, we can find that the parallel transport of $A_0 \in \mathfrak{so(3)} = T_I \mathrm{SO}(3)$ from $R_0=I$ to $R_1 = \exp(\hbu)$ is given by:
\[
R_1 A_1 = \exp\left(\frac{1}{2} \hbu\right) A_0  \exp\left(\frac{1}{2} \hbu\right).
\]
This is equivalent to
\[
A_1 = \exp\left(-\frac{1}{2} \hbu\right) A_0  \exp\left(\frac{1}{2} \hbu\right),
\]
or using Rodrigues' formula:
\begin{equation}
\label{eqn:He-1}
A_1 = \left( I - \sin \left(\frac{\theta}{2}\right) \hbn + \left(1- \cos\left(\frac{\theta}{2}\right)\right) \hbn^2 \right) A_0 \left( I + \sin \left(\frac{\theta}{2}\right) \hbn + \left(1- \cos\left(\frac{\theta}{2}\right)\right) \hbn^2 \right),
\end{equation}
where $\hbu = \theta \hbn$.

Next, we expand the R.H.S. of \eqref{eqn:He-1} and show that it is the same as that of \eqref{B-12-mod}. Indeed, we have
\begin{align}
\label{eqn:He-1-rhs}
\begin{aligned}
 &\left( A_0 - \sin \left(\frac{\theta}{2}\right) \hbn A_0+ \left(1- \cos\left(\frac{\theta}{2}\right)\right) \hbn^2 A_0 \right) \left( I + \sin \left(\frac{\theta}{2}\right) \hbn + \left(1- \cos\left(\frac{\theta}{2}\right)\right) \hbn^2 \right) \\
 &\hspace{1cm}  = A_0 + \sin \frac{\theta}{2} (A_0 \hbn - \hbn A_0) + \left( 1- \cos \frac{\theta}{2} \right)(A_0 \hbn^2 + \hbn^2 A_0) \\
 & \hspace{1.2cm} - \sin^2\left( \frac{\theta}{2} \right) \hbn A_0 \hbn + \left( 1- \cos \frac{\theta}{2} \right)^2 \hbn^2 A_0 \hbn^2,
 \end{aligned}
 \end{align}
 where the terms containing $\hbn A_0 \hbn^2$ and $\hbn^2 A_0 \hbn$ have been cancelled out; this cancellation comes from the fact that $ \hbn A_0 \hbn^2 = \hbn^2 A_0 \hbn$, since $\hbn A_0 \hbn = - (\bn \cdot \az) \hbn$ by $\eqref{eqn:id1}_3$.

By direct calculation, one can show
\begin{align*}
\begin{aligned}
& A_0 \hbn^2 + \hbn^2 A_0 = -A_0 - (\az \cdot \bn) \hbn, \\[2pt]
&\hbn^2 A_0 \hbn^2 = \hbn (-(\bn \cdot \az) \hbn) \hbn = - (\bn \cdot \az) \hbn^3 = (\bn \cdot \az) \hbn.
\end{aligned}
\end{align*}
Using these identities in \eqref{eqn:He-1-rhs}, \eqref{eqn:He-1} becomes:
\begin{align*}
A_1 &= A_0 + \sin \frac{\theta}{2} (A_0 \hbn - \hbn A_0) - \left( 1- \cos \frac{\theta}{2} \right)(A_0 + (\az \cdot \bn) \hbn) \\
&\hspace{0.2cm}  +\sin^2\left( \frac{\theta}{2} \right) (\bn \cdot \az) \hbn  + \left( 1- \cos \frac{\theta}{2} \right)^2 (\bn \cdot \az) \hbn.
\end{align*}
Finally, combine the terms containing $(\az \cdot \bn) \hbn$ to get  \eqref{B-12-mod}.  

%\begin{remark}
%In fact, an explicit parallel transport formula for a general Lie group was introduced in p148 of \cite{He} as follows. Let $G$ be a connected Lie group with Lie algebra $\mathfrak{g}$. Then there exists a unique affine connection $\nabla$ on $G$ invariant under all left and right translations and under the map $J: g\to g^{-1}$. Let $X, Y\in\mathfrak{g}$. Then the parallel translate of $X$ along the curve $\gamma(t)=\exp(tY)$, $(0\leq t\leq 1)$ is given by
%\[
%dL\left(\exp\frac{1}{2}Y\right)dR\left(\exp\frac{1}{2}Y\right)X.
%\]

While our expression for parallel transport is consistent with this general result on Lie groups, it presents an elementary and 
hands-on approach to its derivation. We are not aware of any other works presenting the explicit forms of the metric tensor, Christoffel symbols, geodesics and parallel transport in exponential coordinates, as done in Sections \ref{sec:2} and \ref{sec:3}. These explicit calculations would be valuable tools for researchers on rigid-body dynamics.

%\end{remark}

%%%%%

%Before we close this section, we give two comments on \eqref{B-12-simple} and \eqref{B-12-geom}. 
%\begin{remark}
%(i)~By direct calculation,  we also have
%\begin{equation*}
%\veeA_0=\left(1-\cos\frac{\theta}{2}\right)  (\bn \cdot \veeA_1)\, \mathbf{n}
%+\sin\frac{\theta}{2}\, \veeA_1 \times \bn +\cos\frac{\theta }{2} \, \veeA_1.
%\end{equation*}
%This means, if $A_0$ is parallel transported from $T_{R_0} \mathrm{SO}(3)$ to $T_{R_1} \mathrm{SO}(3)$ along the geodesic $\gamma$ and parallel transport to $T_{R_0} \mathrm{SO}(3)$ along $\gamma$ backward, then it is initial tangent vector $A_0$ on $T_{R_0} \mathrm{SO}(3)$.  \newline
%
%\noindent (ii)~Let $A_0$ be an anti-symmetric matrix in $\mathrm{SO}(3)$. Then we can easily check that $A_1$ also anti-symmetric.
%\end{remark}

%%%%%%%%%%

\section{The CS model on $\mathrm{SO}(3)$} \label{sec:4}
\setcounter{equation}{0}
In this section, we provide a derivation of the CS model on $\mathrm{SO}(3)$ by gathering calculations from previous sections, and investigate its emergent dynamics. We also study the reduction of the model to $\bbs^1$.
 
\subsection{Formulation of the CS model on $\mathrm{SO}(3)$} \label{sec:4.1}
Let $R_i(t) \in \mathrm{SO}(3)$ be the position of the $i$-th CS particle, and we set its velocity as 
\begin{equation} \label{D-0}
 V_i(t) :=\frac{dR_i(t)}{dt}. 
\end{equation} 
Note that we require $(R_i, V_i)$ to satisfy (see \eqref{A-1}):
\begin{equation} \label{D-0-0}
\frac{DV_i}{dt}  =\frac{\kappa}{N}\sum_{k=1}^N\phi_{ik} (P_{ki} V_k-V_i).
 \end{equation}
Since $V_i(t)$ lies in the tangent space of $\mathrm{SO}(3)$ at $R_i(t)$, there exists a unique skew-symmetric matrix $A_i(t)$ such that
\[
V_i(t)=R_i(t)A_i(t).
\]
This yields
\begin{align}
\begin{aligned} \label{D-0-1}
\frac{d}{dt} V_i(t)&= \frac{d}{dt}(R_i(t)A_i(t))=\left(\frac{d}{dt}R_i(t)\right)A_i(t)+R_i(t)\left(\frac{d}{dt}A_i(t)\right)\\
&=R_i(t)A_i(t)^2+R_i(t)\left(\frac{d}{dt}A_i(t)\right).
\end{aligned}
\end{align}

We will rewrite \eqref{D-0-0} in terms of $A_i$. By definition of the covariant derivative (using the embedding of $\mathrm{SO}(3)$ in $\bbr^{3 \times 3}$), one has:
\begin{equation} \label{D-0-2}
\frac{DV_i(t)}{dt}=\mathrm{Proj}_{T_{R_i(t)} \mathrm{SO}(3)}\left(\frac{d}{dt}V_i(t)\right).
\end{equation}
Then, it follows from \eqref{D-0-1} and \eqref{D-0-2} that 
\begin{align*} \label{D-0-3}
\frac{DV_i(t)}{dt}&=\mathrm{Proj}_{T_{R_i(t)}\mathrm{SO}(3)}\left(R_i(t)A_i(t)^2+R_i(t)\left(\frac{d}{dt}A_i(t)\right)\right) \\
&=R_i(t)\left(\frac{d}{dt}A_i(t)\right),
\end{align*}
where we used the fact that $A_i(t)^2$ is symmetric and $\frac{d}{dt}A_i(t)$ is skew-symmetric. This and \eqref{D-0-0} yield
\begin{equation} \label{D-0-4}
\frac{dA_i(t)}{dt} =\frac{\kappa}{N}\sum_{k=1}^N\phi(R_i, R_k)\left(R_i(t)^\top  P_{ki}V_k(t)-R_i(t)^\top V_i(t)\right).
\end{equation}

We use now the calculations in Section \ref{subsect:pt-coord-free}. Here, $R_k$ and $V_k$ play the roles of $R_0$ and $V(0)$ from these calculations, while $R_i$ and $P_{ki}V_k$ play the roles of $R_1$ and $V(1)$. In particular, $A_k = R_k^\top  V_k$ plays the role of $A_0$, and $R_i^\top  P_{ki}V_k$ plays the role of $A_1$. Also, analogous to $\bu$, $\theta$ and $\bn$ from Section \ref{subsect:pt-coord-free}, we have $ \bu_{ki}, \theta_{ki}$ and $ \bn_{ki}$, defined by:
\begin{align}
\label{eqn:utn-ki}
\begin{aligned} 
& \hbu_{ki}=\log(R_k^\top R_i)=\frac{\theta_{ki}}{2\sin\theta_{ki}}\big(R_k^\top R_i-R_i^\top R_k\big), \\
& \theta_{ki}=\|\mathbf{u}_{ki}\|=\arccos\left(\frac{\mathrm{tr}(R_k^\top R_i)-1}{2}\right), \qquad \hbn_{ki}=\frac{\hbu_{ki}}{\theta_{ki}}.
\end{aligned}
\end{align}

From this framework, apply the parallel transport formula \eqref{B-12-simple} to get
\begin{equation}
\label{eqn:PT-ki}
R_i(t)^\top (P_{ki}V_k(t))=\left(1-\cos\frac{\theta_{ki}}{2}\right)(\mathbf{n}_{ki}\cdot \veeA_k)\widehat{\mathbf{n}}_{ki}+\sin\frac{\theta_{ki}}{2} \, \widehat{\veeA_k\times\mathbf{n}_{ki}}+\cos\frac{\theta_{ki}}{2}A_k,
\end{equation} 
where we denoted 
\begin{equation}
\label{eqn:ak}
\veeA_k = \widecheck{A}_k, \qquad \text { for } k=1,\dots,N. 
\end{equation}
Then, using \eqref{eqn:PT-ki} in \eqref{D-0-4}  yields
\[
\frac{d A_i}{dt}=\frac{\kappa}{N}\sum_{k=1}^N\phi_{ik}\left[ \left(1-\cos\frac{\theta_{ki}}{2}\right)(\mathbf{n}_{ki}\cdot \veeA_k)\widehat{\mathbf{n}}_{ki}+\sin\frac{\theta_{ki}}{2}\widehat{\veeA_k\times \mathbf{n}_{ki}}+\cos\frac{\theta_{ki}}{2}A_k-A_i\right ],
\]
or equivalently, by removing the hats,
\begin{equation} \label{D-0-5}
\frac{d \veeA_i}{dt} =\frac{\kappa}{N}\sum_{k=1}^N\phi_{ik}\left[ \left(1-\cos\frac{\theta_{ki}}{2}\right)(\mathbf{n}_{ki}\cdot \veeA_k)\mathbf{n}_{ki}+\sin\frac{\theta_{ki}}{2}\veeA_k\times \mathbf{n}_{ki}+\cos\frac{\theta_{ki}}{2}\veeA_k-\veeA_i\right].
\end{equation}

Finally, we combine \eqref{D-0} and \eqref{D-0-5} to obtain the CS model  on $\mathrm{SO}(3)$:
\begin{align}
\begin{aligned} \label{D-1}
\frac{dR_i}{dt} &=R_iA_i, \\
\frac{d\veeA_i}{dt} &=\frac{\kappa}{N}\sum_{k=1}^N\phi_{ik}\left[ \left(1-\cos\frac{\theta_{ki}}{2}\right)(\mathbf{n}_{ki}\cdot \veeA_k)\mathbf{n}_{ki}+\sin\frac{\theta_{ki}}{2}\veeA_k\times \mathbf{n}_{ki}+\cos\frac{\theta_{ki}}{2}\veeA_k-\veeA_i\right ],
\end{aligned}
\end{align}
for $t >0$, $1\leq i\leq N$, where we used notations \eqref{eqn:utn-ki} and \eqref{eqn:ak}, and  $\phi_{ik}=\phi(R_i,R_k)$. 

Since parallel transport between two cut locus points cannot be well defined, we should impose some conditions on $\phi$ or a priori conditions (see $({\hyp}_A)$ and   $({\hyp}_B)$).  Some examples of communication weight functions $\phi(R, Q) = \tilde{\phi}(d(R, Q)) $ that satisfy condition $({\hyp}_A)$ are:
\[
\tilde{\phi}(d(R,Q))=\sin(d(R, Q)),\qquad \cos\left(\frac{d(R, Q)}{2}\right),\qquad \cos(d(R, Q))+1.
\]
The well-posedness of the CS model on $\mathrm{SO}(3)$ is given by the following proposition.
\begin{proposition}
Suppose either $({\hyp}_A)$ or $({\hyp}_B)$ holds. Then, the Cauchy problem to system \eqref{D-1}  has a unique global solution.
\end{proposition}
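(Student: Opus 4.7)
The plan is to recast \eqref{D-1} as an autonomous ODE on the smooth manifold $\mathrm{SO}(3)^N \times (\bbr^3)^N$, invoke Cauchy--Lipschitz for local well-posedness, and then extend to a global solution using the kinetic energy estimate \eqref{A-3}. That $R_i(t)\in\mathrm{SO}(3)$ is preserved by the flow is automatic: since $A_i=\widehat{\veeA}_i$ is skew-symmetric, $\eqref{D-1}_1$ yields $\frac{d}{dt}(R_i^\top R_i)=0$ and $\frac{d}{dt}\det R_i=0$, so both the orthogonality and determinant constraints are preserved.

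For local existence and uniqueness I introduce the open set
\[
\Omega := \{(R_1,\dots,R_N) \in \mathrm{SO}(3)^N : d(R_i,R_k) < \pi \text{ for all } i \neq k\} \times (\bbr^3)^N.
\]
On $\Omega$, the quantities $\theta_{ki}$ and $\bn_{ki}$ defined in \eqref{eqn:utn-ki} are smooth in $(R_i,R_k)$ since $\sin\theta_{ki}$ is bounded away from zero; combined with the smoothness of $\phi$, this makes the RHS of \eqref{D-1} a smooth vector field on $\Omega$, and Cauchy--Lipschitz gives local well-posedness there. Under $({\hyp}_B)$ the trajectory stays a priori in $\Omega$, so no further discussion is required. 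Under $({\hyp}_A)$ I extend the RHS by zero on the cut locus: each summand in $\eqref{D-1}_2$ has the form $\phi_{ik}F_{ki}$ with $F_{ki}$ bounded in magnitude by $C(\|\veeA_k\|+\|\veeA_i\|)$ but discontinuous at cut locus through $\bn_{ki}$, while smoothness of $\tilde\phi$ together with $\tilde\phi(\pi)=0$ forces $\phi_{ik}=O(\pi - d(R_i,R_k))$ near cut locus. This vanishing compensates for the $1/(\pi-d)$ blow-up of the derivatives of $\bn_{ki}$ in difference quotients, yielding a globally continuous, locally Lipschitz vector field on $\mathrm{SO}(3)^N \times (\bbr^3)^N$ to which Cauchy--Lipschitz applies.

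To pass from local to global solutions, I invoke the dissipation identity \eqref{A-3} already recorded in the paper. Using $\|V_i\|_{R_i}^2=\tfrac12\|A_i\|_{\mathrm F}^2=\|\veeA_i\|^2$ (from \eqref{eqn:length} and Lemma \ref{L2.3}(i)), this yields the uniform a priori bound
\[
\sum_{i=1}^N \|\veeA_i(t)\|^2 \;=\; \mathcal{E}(t) \;\leq\; \mathcal{E}(0) \qquad \text{for all } t\geq 0.
\]
Combined with compactness of $\mathrm{SO}(3)^N$, the trajectory stays in a bounded subset of the phase space. Under $({\hyp}_B)$ this subset is compactly contained in $\Omega$, so the standard continuation principle rules out finite-time breakdown. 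Under $({\hyp}_A)$ the a priori bound together with the globally defined Lipschitz vector field immediately yields continuation to all $t\geq 0$.

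I expect the main technical obstacle to be the Lipschitz extension across the cut locus under $({\hyp}_A)$: the bracket $F_{ki}$ genuinely fails to be continuous there, so the argument rests on matching the order of vanishing of $\tilde\phi$ at $\pi$ against the rate at which $\bn_{ki}$ and its derivatives become singular. Once this regularity balance is verified, the rest of the proof reduces to classical ODE continuation.
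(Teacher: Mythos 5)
Your argument follows essentially the same route as the paper's: local well-posedness by Cauchy--Lipschitz, with the vanishing of $\phi$ at the cut locus (case $({\hyp}_A)$) or the a priori avoidance of the cut locus (case $({\hyp}_B)$) supplying the needed Lipschitz regularity, and the kinetic-energy dissipation \eqref{A-3} providing the uniform bound for continuation to all $t\geq 0$. In fact you give considerably more detail than the paper, which merely asserts Lipschitz continuity in both cases; your order-matching heuristic at the cut locus under $({\hyp}_A)$ (the jump of $\bn_{ki}$ across the cut locus produces difference quotients of size $1/(\pi-d)$, compensated by $\phi_{ik}=O(\pi-d)$) is the right mechanism behind that assertion.

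One step of your justification is wrong as written: on $\Omega$ the quantity $\sin\theta_{ki}$ is \emph{not} bounded away from zero, since $\Omega$ contains configurations with $R_i=R_k$, where $\theta_{ki}=0$, $\bn_{ki}$ is undefined, and $\theta_{ki}=\arccos\bigl((\mathrm{tr}(R_k^\top R_i)-1)/2\bigr)$ has a square-root singularity. The right-hand side of $\eqref{D-1}_2$ is nonetheless smooth there, but to see this one must rewrite the bracket in terms of $\hbu_{ki}=\log(R_k^\top R_i)$, which is smooth on $\{d<\pi\}$ and vanishes at coincidence, and observe that $(1-\cos(\theta/2))/\theta^2$ and $\sin(\theta/2)/\theta$ extend smoothly through $\theta=0$; this removable-singularity argument is needed because coinciding particles are a perfectly admissible configuration. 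A smaller caveat: under $({\hyp}_B)$ the hypothesis only gives $d(R_i(t),R_k(t))<\pi$ pointwise in $t$, not a uniform gap, so the trajectory need not be compactly contained in $\Omega$; the continuation argument still closes because $({\hyp}_B)$ is by fiat an assumption on the whole solution, but this looseness is inherited from the paper's own formulation rather than introduced by you. With the smoothness-at-coincidence repair, your proof is correct and matches the paper's.
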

\begin{proof}
\noindent $\bullet$~Case A:~Suppose the first condition $({\hyp}_A)$ holds. Then $\frac{dR_i}{dt}$ and $\frac{d\veeA_i}{dt}$ are Lipschitz continuous functions in the whole domain. Therefore, standard Cauchy-Lipschitz theory and continuity argument based on uniform a priori bound yield the global well-posedness of a smooth solution.  \newline

\noindent $\bullet$~ Case B:~Suppose the second condition $({\hyp}_B)$ holds.  Then  $\frac{dR_i}{dt}$ and $\frac{d\veeA_i}{dt}$ are Lipschitz continuous functions in the whole domain, except at cut locus pairs. Then, similar to Case A, one can guarantee the global unique solvability of the Cauchy problem to \eqref{D-1}.
\end{proof}

\subsection{Reduction from $\mathrm{SO}(3)$ to  $\bbs^1$} \label{sec:4.2} 
In this subsection, we study the relation between the CS model on $\mathrm{SO}(3)$ and the CS model on the circle $\bbs^1$.

Suppose that the initial data are given as follows.
\[
R_i^0=\begin{bmatrix}
\cos\theta_i^0&-\sin\theta_i^0&0\\
\sin\theta_i^0&\cos\theta_i^0&0\\
0&0&1
\end{bmatrix},\quad V_i^0=R_i^0\begin{bmatrix}
0&-\nu_i^0&0\\
\nu_i^0&0&0\\
0&0&0
\end{bmatrix},\quad
A_i^0=\begin{bmatrix}
0&-\nu_i^0&0\\
\nu_i^0&0&0\\
0&0&0
\end{bmatrix}.
\]
Then, by the isomorphism between $\bbr^3$ and $\mathfrak{so}(3)$ in Section \ref{sec:2.1.1}, one has 
\[
\veeA_i^0=(0, 0, \nu_i^0).
\]

Now take the following ansatz for the solution $R_i(t)$ and $\veeA_i(t)$:
\begin{align}\label{D-1-1}
R_i(t)=\begin{bmatrix}
\cos\theta_i(t)&-\sin\theta_i(t)&0\\
\sin\theta_i(t)&\cos\theta_i(t)&0\\
0&0&1
\end{bmatrix},\quad \veeA_i(t)=(0, 0, \nu_i(t)),\quad A_i(t)=\nu_i(t)E_3,
\end{align}
where
\[ \theta_i(0)=\theta_i^0 \qquad \mbox{and} \qquad \nu_i(0)=\nu_i^0. \]
Then we have
\[
\begin{cases}
\displaystyle \theta_{ki}&=\arccos(\cos(\theta_i-\theta_k)),\\
\displaystyle\widehat{\mathbf{u}}_{ki}&=\frac{\theta_{ki}}{2\sin\theta_{ki}}(R_k^\top R_i-R_i^\top R_k)=\frac{\theta_{ki}}{\sin\theta_{ki}}\begin{bmatrix}
0&-\sin(\theta_i-\theta_k)&0\\
\sin(\theta_i-\theta_k)&0&0\\
0&0&0
\end{bmatrix}\\
&\displaystyle=\theta_{ki}\left(\frac{\sin(\theta_i-\theta_k)}{\sin\theta_{ki}}\right) E_3.
\end{cases}
\]
From here, one has
\[
\widehat\bn_{ki}=\left(\frac{\sin(\theta_i-\theta_k)}{\sin\theta_{ki}}\right) E_3\quad\Longrightarrow\quad \bn_{ki}=\left(0, 0,\frac{\sin(\theta_i-\theta_k)}{\sin\theta_{ki}}\right).
\]

By direct calculation, we have
\begin{align*}
\dot{R}_i(t)=\dot{\theta}_i(t)\begin{bmatrix}
-\sin\theta_i(t)&-\cos\theta_i(t)&0\\
\cos\theta_i(t)&-\sin\theta_i(t)&0\\
0&0&0
\end{bmatrix}=\dot{\theta}_i(t)R_i(t) E_3.
\end{align*}
So we set 
\[ \dot{\theta}_i(t)=\nu_i(t) \]
to satisfy $\dot{R}_i=R_iA_i$. Since $\veeA_k$ and $\bn_{ki}$ are parallel, and $\bn_{ki}$ is a unit vector, it holds that 
\[
(\mathbf{n}_{ki}\cdot \veeA_k)\mathbf{n}_{ki}=\veeA_k,\qquad \veeA_k\times \bn_{ki}=0.
\]
From these relations, we have
\[ \frac{\kappa}{N}\sum_{k=1}^N\phi_{ik}\left(\left(1-\cos\frac{\theta_{ki}}{2}\right)(\mathbf{n}_{ki}\cdot \veeA_k)\mathbf{n}_{ki}+\sin\frac{\theta_{ki}}{2}\veeA_k\times \mathbf{n}_{ki}+\cos\frac{\theta_{ki}}{2}\veeA_k-\veeA_i\right)  = \frac{\kappa}{N}\sum_{k=1}^N\phi_{ik}(\veeA_k-\veeA_i). 
\]
Hence, if $(\theta_i, \nu_i)$ satisfy the CS model on the circle, given by:
\begin{align*}
\begin{cases}
\displaystyle\frac{d\theta_i}{dt} =\nu_i,\\
\displaystyle\frac{d\nu_i}{dt} =\frac{\kappa}{N}\sum_{k=1}^N\phi_{ik}(\nu_k-\nu_i),
\end{cases}
\end{align*}
then $(R_i, \veeA_i)$ defined by \eqref{D-1-1} satisfy \eqref{D-1}. This shows that in this case, the CS model on $\mathrm{SO}(3)$ can be reduced to the CS model on the circle.

\subsection{Emergent dynamics} \label{sec:4.3} 
In this subsection, we study the emergent behavior of the CS model \eqref{D-1} by using an explicit Lyapunov functional and LaSalle's invariance principle.  For a given ensemble $\{ R_i, V_i = R_i A_i \}_{i=1}^N$, we define a Lyapunov functional $\mathcal{E}$ which corresponds to the total kinetic energy:
\[
\mathcal{E}(t) :=\sum_{i=1}^N {\| V_i(t)\|}_{R_i(t)}^2.
\]

Then, by direct calculations, one has
\begin{align}
\begin{aligned} \label{D-2}
\frac{d\mathcal{E}(t)}{dt} &=2\sum_{i=1}^N\left\langle \frac{DV_i}{dt}, V_i\right\rangle_{\hspace{-1.5mm}{R_i}} =2\sum_{i=1}^N\left\langle \frac{\kappa}{N}\sum_{k=1}^N\phi(R_i, R_k) (P_{ki}V_k-V_i), V_i \right\rangle_{\hspace{-1.5mm}{R_i}}\\
&=\frac{2\kappa}{N}\sum_{i, k=1}^N \phi(R_i, R_k) {\langle P_{ki}V_k- V_i, V_i\rangle}_{R_i} \\
& =\frac{2\kappa}{N}\sum_{i, k=1}^N\phi(R_i, R_k) \left( {\langle P_{ki} V_k, V_i\rangle}_{R_i}-{\|V_i\|}_{R_i}^2\right)\\
&=\frac{\kappa}{N}\sum_{i, k=1}^N\phi(R_i, R_k)\left(2 {\langle P_{ki}V_k, V_i\rangle}_{R_i}- {\|V_i\|}_{R_i}^2-{\|V_k\|}_{R_k}^2\right)\\
&=-\frac{\kappa}{N}\sum_{i, k=1}^N\phi(R_i, R_k){\|P_{ki}V_k-V_i\|}_{R_i}^2\leq0.
\end{aligned}
\end{align}
For the last two equalities in \eqref{D-2}, we used the symmetry of the communication function, $ \phi(R_i, R_k)=\phi(R_k, R_i)$, and the fact that parallel transport preserves lengths, specifically,
\[
{\|V_k\|}^2_{R_k}={\|P_{ki}V_k\|}^2_{R_i}. 
\]

Let $\{(R_i, A_i)\}_{i=1}^N\in(\mathrm{SO}(3) \times \mathfrak{so}(3))^N$ be a solution of system \eqref{D-1}. From \eqref{D-2}, we know that $\mathcal{E}(t)$ is decreasing, and hence,
\[
\sup_{t\in[0, \infty)}\|V_i(t)\|^2_{R_i(t)}\leq \sup_{t\in[0, \infty)}\mathcal{E}(t)\leq \mathcal{E}(0),
\]
which implies the boundedness of $\frac{1}{2}{\|A_i(t)\|}_{\mathrm{F}}^2={\|V_i(t)\|}_{R_i(t)}^2$ for all $i=1,\dots,N$ and $t\geq 0$. Then, by defining the subset of $\mathfrak{so}(3)$,
\[
\mathcal{D}:=\{A\in\mathfrak{so}(3): {\|A\|}_{\mathrm{F}} \leq \sqrt{2\mathcal{E}(0)}\},
\]
we can restrict the domain of states as follows:
\[
\{(R_i, A_i)\}_{i=1}^N\in(\mathrm{SO}(3) \times \mathcal{D})^N.
\]

Since this domain is compact, we can apply LaSalle's invariance principle to \eqref{D-2} to get
\begin{equation} \label{D-3}
\lim_{t\to\infty}\frac{d\mathcal{E}(t)}{dt} =0, \quad \mbox{i.e.,} \quad \lim_{t\to\infty}\phi(R_i, R_k){\|P_{ki} V_k-V_i\|}_{R_i}^2=0, \qquad\forall~i, k \in \{1, \cdots, N \}.
\end{equation}
On the other hand, using \eqref{B-12-simple} and the fact that the hat operator is an isomorphism between $\mathfrak{so}(3)$ and $\bbr^3$, one has:
\begin{equation}
%\begin{aligned} 
\label{D-4}
 {\|P_{ki}V_k-V_i\|}_{R_i} =\left\|\left(1-\cos\frac{\theta_{ki}}{2}\right)(\mathbf{n}_{ki}\cdot \veeA_k)\mathbf{n}_{ki}+\sin\frac{\theta_{ki}}{2}\veeA_k\times \mathbf{n}_{ki}+\cos\frac{\theta_{ki}}{2}\veeA_k-\veeA_i\right\|.
%\end{aligned}
\end{equation}

Finally, we combine \eqref{D-3} and \eqref{D-4} to derive the following theorem on emergent dynamics. 
\begin{theorem} \label{T4.1}
Suppose that either $({\hyp}_A)$ or $({\hyp}_B)$ in Section 1 holds, and  let $\{ (R_i, A_i) \}_{i=1}^N$ be a solution to \eqref{D-1}. Then, one has the following assertions.
\begin{enumerate}
\item
Suppose that the condition $({\hyp}_A)$ holds. Then, for each $1\leq i\neq k\leq N$ we have
\begin{equation}
\label{eqn:asymptotic-A}
\lim_{t\to\infty}\phi(R_i, R_k) {\|P_{ki} V_k-V_i\|}_{R_i}^2=0,
\end{equation}
or equivalently, 
\[
\lim_{t\to\infty}\phi(R_i, R_k)\left[ \left(1-\cos\frac{\theta_{ki}}{2}\right)(\mathbf{n}_{ki}\cdot \veeA_k)\mathbf{n}_{ki}+\sin\frac{\theta_{ki}}{2}\veeA_k\times \mathbf{n}_{ki}+\cos\frac{\theta_{ki}}{2}\veeA_k-\veeA_i\right ]= 0.
\]
\item
Suppose that the condition $({\hyp}_B)$ holds. Then, for each $1\leq i\neq k\leq N$ we have
\[
\lim_{t\to\infty} {\|P_{ki} V_k-V_i\|}_{R_i}^2=0,
\] 
or equivalently,
\[
\lim_{t\to\infty}\left[ \left(1-\cos\frac{\theta_{ki}}{2}\right)(\mathbf{n}_{ki}\cdot \veeA_k)\mathbf{n}_{ki}+\sin\frac{\theta_{ki}}{2}\veeA_k\times \mathbf{n}_{ki}+\cos\frac{\theta_{ki}}{2}\veeA_k-\veeA_i\right ]= 0.
\]
\end{enumerate}
\end{theorem}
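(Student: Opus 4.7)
The overall plan is to read off the conclusion from the Lyapunov-type dissipation identity \eqref{D-2} together with the explicit expression \eqref{D-4} for the parallel transport difference. The bulk of the argument is already laid out in the excerpt: \eqref{D-2} shows that $\mathcal{E}$ is non-increasing, and the application of LaSalle's invariance principle in \eqref{D-3} produces $\phi(R_i,R_k)\|P_{ki}V_k - V_i\|_{R_i}^2 \to 0$ for every pair $(i,k)$. The theorem then reduces to assembling these ingredients carefully under each of the two hypotheses.

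For part (1), I would first observe that monotone decay of $\mathcal{E}$ confines the state $\{(R_i, A_i)\}_{i=1}^N$ to the compact set $(\mathrm{SO}(3) \times \mathcal{D})^N$, and that under $({\hyp}_A)$ the right-hand side of \eqref{D-1} extends continuously across the cut locus (because $\phi$ vanishes there). LaSalle's principle therefore applies directly, delivering $\lim_{t\to\infty}\phi(R_i,R_k)\|P_{ki}V_k - V_i\|_{R_i}^2 = 0$ as stated. The equivalent vectorial form is obtained by substituting \eqref{D-4} and using that $\|\cdot\|_{R_i}$ is a genuine norm, so the limit of the norm being zero is equivalent to the limit of the bracketed $\bbr^3$-vector being zero, multiplied by $\phi(R_i,R_k)$.

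For part (2), the same LaSalle argument yields $\phi(R_i,R_k)\|P_{ki}V_k - V_i\|_{R_i}^2 \to 0$. To promote this to $\|P_{ki}V_k - V_i\|_{R_i}^2 \to 0$, I would argue that hypothesis $({\hyp}_B)$ together with precompactness of the trajectory in $(\mathrm{SO}(3)\times \mathcal{D})^N$ forces the pairwise geodesic distances to stay bounded above by some $\bar{\theta} < \pi$ uniformly in time. Continuity of $\phi$ as a function of geodesic distance then produces a uniform positive lower bound $\phi(R_i(t), R_k(t)) \geq c > 0$, which can be divided out to conclude $\|P_{ki}V_k - V_i\|_{R_i}^2 \to 0$, and rewriting via \eqref{D-4} gives the stated equivalent form.

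The main technical delicacy lies precisely in this last step: the pointwise condition $d(R_i(t), R_k(t)) < \pi$ built into $({\hyp}_B)$ does not by itself preclude distances approaching $\pi$ along some sequence $t_n \to \infty$. One natural way around this is to pass to the $\omega$-limit set, which is nonempty and compact by precompactness of the trajectory, and to verify that $({\hyp}_B)$ persists on this limit set by a continuity/invariance argument for the flow; compactness of the limit set then supplies the required uniform separation from the cut locus. Once this uniform separation is in place, the remaining manipulations are routine assembly of the ingredients already developed in Section \ref{sec:4.3}.
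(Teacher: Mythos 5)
Your overall route is the paper's own: the dissipation identity \eqref{D-2}, confinement of the state to the compact set $(\mathrm{SO}(3)\times\mathcal{D})^N$, LaSalle's invariance principle yielding \eqref{D-3}, and the explicit formula \eqref{D-4} to convert the norm statement into the vectorial one. Part (1) reproduces the paper's argument exactly and is fine.

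For part (2) you correctly flag the one step the paper leaves implicit --- dividing out $\phi$ --- but the fix you propose does not work as stated. The condition $d(R_i,R_k)<\pi$ in $({\hyp}_B)$ defines an \emph{open} set, and openness is not inherited by the $\omega$-limit set: a trajectory with $d(R_i(t),R_k(t))<\pi$ for all $t$ can perfectly well have limit points at distance exactly $\pi$, and no ``continuity/invariance argument'' excludes this without additional input. So compactness of the $\omega$-limit set does not deliver the uniform separation $\bar\theta<\pi$ your argument needs. The justification the paper intends is simpler and bypasses this entirely: in case $({\hyp}_B)$ the weight is not required to vanish at the cut locus (that is the defining feature of case $({\hyp}_A)$), so $\tilde{\phi}$ is implicitly a continuous, strictly positive function on the compact interval $[0,\pi]$ of attainable distances, hence bounded below by a positive constant along the entire trajectory; the division in \eqref{D-3} is then immediate, with no need to keep the particles uniformly away from the cut locus. (If $\tilde{\phi}(\pi)=0$, one is in case $({\hyp}_A)$ and only the weighted conclusion of part (1) is claimed.) With that replacement your proof coincides with the paper's; as written, the $\omega$-limit-set step is the one link that would fail.
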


%%%%%

\subsection{The $\omega$-limit set} \label{sec:4.4} 
In this subsection, we investigate the structure of the $\omega$-limit set of the Cucker-Smale model on $\mathrm{SO}(3)$. By Theorem \ref{T4.1}, each point in the $\omega$-limit set satisfies \eqref{eqn:asymptotic-A}. Moreover, since the $\omega$-limit set is positively invariant, each trajectory that starts in the $\omega$-limit set satisfies $\eqref{D-0-0}$. Therefore, each particle is either at rest or moves along a constant speed geodesic.
\medskip

{\em Geodesic loops in $\mathrm{SO}(3)$.} 
Let us first specify what is a closed geodesic curve on $\mathrm{SO}(3)$.  Consider the unit-speed geodesic curve starting at $R_0$ in the direction $R_0 \hbn$, where $\bn\in\bbr^3$, with $\|\bn\|=1$. By the explicit representation of the exponential map in \eqref{eqn:exp-map}, this curve is given as 
\[
\calR(t) = R_0 \exp(t \hbn) = I + \sin t \, \hbn + (1-\cos t) \hbn^2, \qquad t\in [0,\pi].
\]
At $t=\pi$, $\calR(\pi) = R_0 (2 \bn \bn^\top  - I)$ is a point in the cut locus of $R_0$, at distance $\pi$ from $R_0$. 

Now consider the unit-speed geodesic curve $\calQ(t)$ starting from $R_0$ in the opposite direction $-R_0 \hbn$, as given by
\[
\calQ(t) = R_0 \exp(-t \hbn) = I - \sin t \, \hbn + (1-\cos t) \hbn^2, \qquad t \in [0,\pi].
\]
This curve reaches the same point $\calQ(\pi) = R_0 (2 \bn \bn^\top  - I) = \calR(\pi)$ at time $\pi$.  By construction, 
\[ \calR'(t) = \calR(t) \hbn, \qquad \mbox{and} \qquad \calQ'(t) = - \calQ(t) \hbn, \quad  \mbox{for all $t \in [0,\pi]$}. \]
In particular, the tangents to the two curves at $t=\pi$ are opposite to each other, that is,
\[
\calR'(\pi) = \calR(\pi) \hbn =  \calQ(\pi) \hbn = -\calQ'(\pi).
\]

To obtain a geodesic loop starting and ending at $R_0$ we patch together the two geodesic arcs, and consider the curve $\tilde{\calR}(t)$ on $[0,2\pi]$ defined by:
\begin{equation*}
\tilde{\calR}(t) 
 = \begin{cases}
          \calR(t),          & \text{for } t \in [0,\pi], \\
          \calQ(2 \pi -t), & \text{for } t \in (\pi,2\pi].
      \end{cases}
\end{equation*} 
Then, we can extend the curve for all $t >2 \pi$, with the image of the curve being this closed geodesic loop. 

In the visualization of $\mathrm{SO}(3)$ as a ball of radius $\pi$ with center at $R_0$ (see also Section \ref{sec:5}), this geodesic loop consists of two parts. The first part is the curve $\calR(t)$ starting at the center and reaching the boundary (the sphere of radius $\pi$) at $t=\pi$. The second part of the loop starts at the antipodal of the point where the first curve ended at $t=\pi$, and ends at the center $R_0$. So visually, it seems like  that a jump has occurred at $t=\pi$, however, antipodal points on the boundary are identified, and the curve is indeed smooth.
\medskip

{\em Structure of the $\omega$-limit set.} Consider $N$ constant speed geodesic loops $R_i(t)$ on $\mathrm{SO}(3)$, as described above, initialized in the $\omega$-limit set of \eqref{D-1}. The loops start at $R_i^0 \in \mathrm{SO}(3)$, in the direction $R_i^0 A_i$, where $A_i = \widehat{\veeA}_i \in \mathfrak{so}(3)$, $i=1,\dots,N$, are constant matrices. Note that $\veeA_i$ have not been assumed of unit length, so a generic loop $R_i(t)$ closes after $t=2\pi/\|\veeA_i\|$ units of time.  For all $i$, we have 
\[
\dot{R}_i(t) = R_i(t) A_i.
\]

For the moment, we ignore  the issue of points reaching the cut locus of each other dynamically.  To characterize the $\omega$-limit set of the CS model \eqref{D-1}, we require 
\begin{equation}
\label{eqn:PT}
P_{ki} \dot{R}_k(t) = \dot{R}_i(t), \qquad\text{for all } i, k\in\{1, 2, \cdots, N\} \quad \text{ and } t\geq0,
\end{equation}
such that $\theta_{ki}(t) = d(R_i(t),R_k(t))<\pi$. The goal is to show that the only possibility for this to happen is to have a single geodesic loop along which all particles move. 
%We assume that either $({\hyp}_A)$ or (${\hyp}_B)$ holds. If we assume (${\hyp}_B)$, then we do not have to worry about the case $\theta_{ki}=\pi$, whereas if we assume $({\hyp}_A$), then we know that if $\theta_{ki}=\pi$ then $\phi_{ik}=0$ at that instant. This implies  that we do not have to impose condition \eqref{eqn:PT} for the indices $i, k$ at that moment. With these facts in mind, we find the structure of the $\omega$-limit set of \eqref{D-1}. 

Fix two arbitrary indices $i$ and $k$ and suppose that $\theta_{ki}(0)\neq \pi$. Then there exists $\epsilon>0$ such that
\[
\theta_{ki}(t) \neq \pi,\quad \forall~t\in[0, \epsilon).
\] 
Consider $\gamma_{ki}(t,\cdot)$, the length-minimizing geodesic between $R_k(t)$ and $R_i(t)$, and the parallel transport of $\dot{R}_k(t)$ along $\gamma_{ki}$, from $R_k(t)$ to $R_i(t)$. Using the calculations detailed in Section \ref{sec:4.1} (see equation \eqref{eqn:PT-ki}), the parallel transport relationship \eqref{eqn:PT} can be written as:
\begin{equation}
\label{PT-ij-simple}
A_i=\left(1-\cos\frac{\theta_{ki}}{2}\right)  (\bn_{ki} \cdot \veeA_k)\, \hbn_{ki}
+\sin\frac{\theta_{ki}}{2}\,  \widehat{\veeA_k \times \bn_{ki}} +\cos\frac{\theta_{ki}}{2} A_k,
\end{equation}
or without the hats,
\begin{equation}
\label{PT-ij-geom}
\veeA_i =\left(1-\cos\frac{\theta_{ki}}{2}\right)  (\bn_{ki} \cdot  \veeA_k)\, \bn_{ki}
+\sin\frac{\theta_{ki}}{2}\,   \veeA_k\times \bn_{ki} +\cos\frac{\theta_{ki}}{2} \, \veeA_k,
\end{equation}
where we used notations \eqref{eqn:utn-ki} and \eqref{eqn:ak}. Note that $\theta_{ki}$ and $\bn_{ki}$ depend on time, while $\ba_i$ and $\ba_k$ are fixed. Equations \eqref{PT-ij-simple} (and \eqref{PT-ij-geom}) need to hold for all times.

First, since parallel transport preserves lengths, we have
\[  {\|R_i A_i\|}_{R_i} = {\| R_k A_k\|}_{R_k}, \]
or equivalently, $\| \veeA_i\| = \| \veeA_k\|$. We denote by $\theta$ the common length, and hence we can write
\begin{equation}
\label{eqn:aiak}
\veeA_i= \theta \bn_i \quad \text{and}\quad \veeA_k= \theta \bn_k,
\end{equation}
where $\bn_i$ and $\bn_k$ are unit vectors in $\bbr^3$.

Second, parallel transport preserves angles, so the angle that $\dot{R}_k(t)$ makes with the geodesic $\gamma_{ki}$ at $R_k(t)$ is the same as the angle between $\dot{R}_i(t)$ and $\gamma_{ki}$ at $R_i(t)$. Consequently,
\[
\dot{R}_k \cdot \log_{R_k} R_i = - \dot{R}_i \cdot \log_{R_i} R_k,
\]
and hence, we have
\begin{equation} \label{New-2}
R_k A_k \cdot R_k \log(R_k^\top  R_i) = - R_i A_i \cdot R_i \log(R_i^\top  R_k).
\end{equation}
By the expression of matrix logarithm (see \eqref{eqn:Rtotv}), we can write \eqref{New-2} as
\[
\frac{\theta_{ki}}{2 \sin \theta_{ki}} {\langle A_k , R_k^\top  R_i - R_i^\top  R_k \rangle}_{\mathrm{F}} = - \frac{\theta_{ki}}{2 \sin \theta_{ki}} {\langle A_i, R_i^\top  R_k - R_k^\top  R_i \rangle}_{\mathrm{F}},
\]
and furthermore, by restoring the dependence on $t$, 
\[
\frac{\theta_{ki}(t)}{2 \sin \theta_{ki}(t)} {\langle A_k - A_i , R_k(t)^\top  R_i(t) - R_i(t)^\top  R_k(t)\rangle}_{\mathrm{F}} =0, \qquad \text{ for all } t\geq 0.
\]
The expression $\frac{\theta_{ki}}{2 \sin \theta_{ki}}$ is well-defined when $\theta_{ki}\in(0, \pi)$. Since the function can be extended to $\theta_{ki}\in[0, \pi)$ continuously, we define the value at $\theta_{ki}=0$ as $\frac{1}{2}$.

Now we work out the following equation explicitly:
\begin{equation}
\label{eqn:orthog}
\langle A_k - A_i , R_k(t)^\top  R_i(t) - R_i(t)^\top  R_k(t) \rangle_{\mathrm{F}} =0.
\end{equation}
Note that for any skew-symmetric matrix $A$ and rotation matrix $R$, one has
\[
\langle A,R^\top  \rangle_{\mathrm{F}} = \Tr(A R) = \Tr(R A) = - \Tr(RA^\top ) = - \langle A,R \rangle_{\mathrm{F}}
\]
Hence, since $A_k - A_i$ is skew-symmetric and $R_k(t)^\top  R_i(t) \in \mathrm{SO}(3)$, \eqref{eqn:orthog} can be written as
\begin{equation*}
{\langle A_k - A_i , R_k(t)^\top  R_i(t) \rangle}_{\mathrm{F}} =0,
\end{equation*}
or equivalently,
\begin{equation}
\label{eqn:orthog1}
\langle  A_i , R_k(t)^\top  R_i(t) \rangle_{\mathrm{F}} = \langle  A_k , R_k(t)^\top  R_i(t) \rangle_{\mathrm{F}}.
\end{equation}
Since $A_i =\widehat{\veeA}_i = \theta \hbn_i$ and $A_k = \widehat{\veeA}_k = \theta \hbn_k$ (see \eqref{eqn:aiak}), we can cancel $\theta$ and write \eqref{eqn:orthog1} as:
\begin{equation}
\label{eqn:orthog2}
\langle  \hbn_i, R_k(t)^\top  R_i(t) \rangle_{\mathrm{F}} = \langle  \hbn_k , R_k(t)^\top  R_i(t) \rangle_{\mathrm{F}}.
\end{equation}

In the following lemma, we calculate ${\langle  \hbn_i , R_k(t)^\top  R_i(t) \rangle}_{\mathrm{F}}$ and ${\langle  \hbn_k , R_k(t)^\top  R_i(t) \rangle}_{\mathrm{F}}$.
\begin{lemma}\label{L4.1}
The following relations hold:
\begin{align*}
\begin{aligned}
& (i)~\langle  \hbn_i , R_k(t)^\top  R_i(t) \rangle_{\mathrm{F}}  =  \Tr \left( -\frac{1}{2} \hbn_i \Rz \hbn_k^2- \frac{1}{2} \hbn_i^2 \Rz \hbn_k \right) \\[2pt]
& \hspace{2cm} + \Tr \left(- \hbn_i^2 \Rz - \hbn_i^2 \Rz \hbn_k^2 \right) \sin(\theta t) + \Tr \left(\hbn_i \Rz + \hbn_i \Rz \hbn_k^2\right) \cos(\theta t)  \\[2pt]
& \hspace{2cm}+ \Tr \left(\frac{1}{2} \hbn_i \Rz \hbn_k + \frac{1}{2} \hbn_i^2 \Rz \hbn_k^2 \right) \sin(2 \theta t) + \Tr \left(-\frac{1}{2} \hbn_i \Rz \hbn_k^2+ \frac{1}{2} \hbn_i^2 \Rz \hbn_k \right) \cos(2 \theta t), \\[5pt]
& (ii)~\langle  \hbn_k , R_k(t)^\top  R_i(t) \rangle_{\mathrm{F}} = \Tr \left( -\frac{1}{2} \hbn_i \Rz \hbn_k^2 -\frac{1}{2} \hbn_i^2 \Rz \hbn_k \right) \\[2pt]
& \hspace{2cm} + \Tr \left(\Rz \hbn_k^2 +  \hbn_i^2 \Rz \hbn_k^2 \right) \sin(\theta t) + \Tr \left( \Rz \hbn_k + \hbn_i^2 \Rz \hbn_k \right) \cos(\theta t) \\[2pt]
& \hspace{2cm}+ \Tr \left(-\frac{1}{2} \hbn_i \Rz \hbn_k -\frac{1}{2} \hbn_i^2 \Rz \hbn_k^2 \right) \sin(2 \theta t) + \Tr \left(\frac{1}{2} \hbn_i \Rz \hbn_k^2 - \frac{1}{2} \hbn_i^2 \Rz \hbn_k \right) \cos(2 \theta t),
\end{aligned}
\end{align*}
where we used the notation $\Rz = {R_i^0}^\top  R_k^0$.
\end{lemma}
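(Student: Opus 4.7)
\textbf{Proof proposal for Lemma \ref{L4.1}.}

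The setup guarantees that in the $\omega$-limit set each particle follows a constant-speed geodesic, so by Section \ref{sec:2.1.3} we have the explicit representations
\[
R_i(t) = R_i^0 \exp(t\theta \hbn_i), \qquad R_k(t) = R_k^0 \exp(t\theta \hbn_k),
\]
and therefore
\[
R_k(t)^\top R_i(t) = \exp(-t\theta \hbn_k)\, \Rz^\top \exp(t\theta \hbn_i),
\]
since $(R_k^0)^\top R_i^0 = \Rz^\top$ under the convention $\Rz = (R_i^0)^\top R_k^0$. The plan is to expand both exponentials by Rodrigues' formula \eqref{eqn:tvtoR}, multiply out, and then take Frobenius inner products with $\hbn_i$ and $\hbn_k$ using
\[
\langle \hbn, B \rangle_{\mathrm{F}} = \Tr(\hbn^\top B) = -\Tr(\hbn B),
\]
since $\hbn$ is skew-symmetric.

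First, using $\|\bn_i\|=\|\bn_k\|=1$, apply Rodrigues to write
\[
\exp(-t\theta \hbn_k) = I - s\,\hbn_k + (1-c)\,\hbn_k^2,\qquad \exp(t\theta \hbn_i) = I + s\,\hbn_i + (1-c)\,\hbn_i^2,
\]
with $s := \sin(\theta t)$, $c := \cos(\theta t)$. Expanding the product yields a linear combination with nine terms having coefficients built from $1$, $s$, $(1-c)$, $s^2$, $s(1-c)$, and $(1-c)^2$. The next step is to re-express these scalar factors in the trigonometric basis $\{1,\sin(\theta t),\cos(\theta t),\sin(2\theta t),\cos(2\theta t)\}$ via the double-angle formulas
\[
s^2 = \tfrac{1-\cos(2\theta t)}{2},\quad sc = \tfrac{\sin(2\theta t)}{2},\quad (1-c)^2 = \tfrac{3}{2} - 2\cos(\theta t) + \tfrac{\cos(2\theta t)}{2},
\]
so that $R_k(t)^\top R_i(t)$ becomes an explicit linear combination of five matrix-valued coefficients times these five trigonometric functions.

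Then I take $-\Tr(\hbn_i\, \cdot\,)$ and $-\Tr(\hbn_k\,\cdot\,)$ of each matrix coefficient to obtain the coefficients asserted in the lemma. At this stage, four identities do all the work: the cyclic invariance of trace, the skew-symmetry identities $\hbn_i^\top = -\hbn_i$ and $\hbn_k^\top = -\hbn_k$, the symmetry of $\hbn^2$, and the key relation $\hbn_i^3 = -\hbn_i$ (which follows from $\|\bn_i\|=1$ and Lemma \ref{L2.2}, and similarly for $\hbn_k$). For instance, in the constant term these identities cause cancellation of the pieces $\Tr(\hbn_i M)$ and $\Tr(\hbn_i M \hbn_i^2)$ (with $M=\Rz^\top$), and convert the surviving pieces to the form $-\tfrac12\Tr(\hbn_i \Rz \hbn_k^2) - \tfrac12\Tr(\hbn_i^2 \Rz \hbn_k)$ via the trace-transpose trick $\Tr(N_i N_k^2 \Rz^\top) = -\Tr(\hbn_i \Rz \hbn_k^2)$. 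Computing $\langle \hbn_k , R_k(t)^\top R_i(t)\rangle_{\mathrm{F}}$ proceeds identically, with $\hbn_k$ absorbing into the \emph{left} exponential factor instead.

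The main obstacle is bookkeeping: there are $9\times 5 = 45$ scalar terms before simplification, many of which vanish by $\hbn^3 = -\hbn$ or collapse under the cyclic/transpose identities, and one must carefully track the sign from $\hbn_i^\top = -\hbn_i$ and from converting $\Rz^\top$ back to $\Rz$. Organizing the computation coefficient-by-coefficient in the basis $\{1,\sin(\theta t),\cos(\theta t),\sin(2\theta t),\cos(2\theta t)\}$, and verifying the cancellations term-by-term, is therefore the critical (though entirely routine) part of the argument.
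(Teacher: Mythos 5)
Your proposal is correct and follows essentially the same route as the paper's proof: expand via Rodrigues' formula, reorganize in the basis $\{1,\sin(\theta t),\cos(\theta t),\sin(2\theta t),\cos(2\theta t)\}$, and reduce the resulting traces using $\hbn^3=-\hbn$ together with cyclic invariance, skew-symmetry and the trace--transpose identity. The only difference is organizational: the paper first rewrites $\langle \hbn_i, R_k(t)^\top R_i(t)\rangle_{\mathrm{F}}=\Tr(\hbn_i R_i(t)^\top R_k(t))$ so that $\Rz$ (rather than $\Rz^\top$) appears, and absorbs $\hbn_i$ (resp.\ $\hbn_k$) into the adjacent exponential via $\hbn_i\bigl(I-\sin(\theta t)\hbn_i+(1-\cos(\theta t))\hbn_i^2\bigr)=\cos(\theta t)\hbn_i-\sin(\theta t)\hbn_i^2$, which cuts your nine-product expansion down to six and removes the final conversion from $\Rz^\top$ back to $\Rz$.
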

\begin{proof}
Since the proof of this lemma is rather lengthy, we present it in Appendix \ref{proofL4.1}.
\end{proof}

We substitute the result of Lemma \ref{L4.1} into \eqref{eqn:orthog2} and equate the coefficients of the trigonometric functions $\sin(\theta t)$, $\cos(\theta t)$, $\sin(2\theta t)$, and $\cos(2\theta t)$ to obtain
\begin{equation} \label{J-1}
\begin{cases}
\displaystyle  \Tr(- \hbn_i^2 \Rz - \hbn_i^2 \Rz \hbn_k^2)  = \Tr(\Rz \hbn_k^2 +  \hbn_i^2 \Rz \hbn_k^2),\vspace{0.2cm} \\
\displaystyle  \Tr(\hbn_i \Rz + \hbn_i \Rz \hbn_k^2)  = \Tr( \Rz \hbn_k + \hbn_i^2 \Rz \hbn_k),\vspace{0.2cm} \\
\displaystyle \Tr \left(\frac{1}{2} \hbn_i \Rz \hbn_k + \frac{1}{2} \hbn_i^2 \Rz \hbn_k^2\right) = \Tr\left(-\frac{1}{2} \hbn_i \Rz \hbn_k -\frac{1}{2} \hbn_i^2 \Rz \hbn_k^2\right),\vspace{0.2cm}\\
\displaystyle \Tr\left(-\frac{1}{2} \hbn_i \Rz \hbn_k^2+ \frac{1}{2} \hbn_i^2 \Rz \hbn_k \right)  =  \Tr\left(\frac{1}{2} \hbn_i \Rz \hbn_k^2 - \frac{1}{2} \hbn_i^2 \Rz \hbn_k\right).
\end{cases}
\end{equation}
From $\eqref{J-1}_4$, we get
\begin{equation}
\label{rel1}
\Tr( \hbn_i \Rz \hbn_k^2) = \Tr( \hbn_i^2 \Rz \hbn_k).
\end{equation}
Then, \eqref{rel1} and   $\eqref{J-1}_2$ yield
\begin{equation}
\label{rel2}
\Tr(\Rz \hbn_i ) = \Tr( \Rz \hbn_k).
\end{equation}
Note that $\eqref{J-1}_3$ can be rewritten as
\begin{equation}
\label{rel3}
 \Tr(\hbn_i \Rz \hbn_k + \hbn_i^2 \Rz \hbn_k^2) = 0.
\end{equation}
Finally,  $\eqref{J-1}_1$ leads to
\begin{equation}
\label{rel4}
\Tr(\Rz \hbn_k^2 + \Rz \hbn_i^2 + 2  \hbn_i^2 \Rz \hbn_k^2) = 0.
\end{equation}

Without any loss of generality, we assume that $\Rz$ is a rotation about the $x_1$-axis. Since $\Rz = {R_i^0}^\top  R_k^0$, the angle of rotation is $\theta_{ki}^0$. For simplicity of notation, set 
\[ \alpha = \theta_{ki}^0. \]
Then $\Rz$ becomes 
\begin{align*}
\label{Rz}
\Rz=\begin{bmatrix}
1 &0 &0\\
0& \cos \alpha&-\sin \alpha\\
0& \sin \alpha& \cos \alpha
\end{bmatrix}.
\end{align*}
Also for ease of notation, we set 
\[ \bn_i = (x_1,x_2,x_3), \qquad \mbox{and} \qquad \bn_k = (y_1,y_2,y_3). \]

By direct calculation, one finds
\[
\Tr(\Rz (\hbn_i - \hbn_k)) = 2 \sin \alpha (y_1 - x_1).
\]
For $\alpha < \pi$, \eqref{rel2} implies 
\[ y_1=x_1. \]
By direct calculations  one finds
\begin{align*}
\begin{aligned}
& \Tr(\hbn_i \Rz \hbn_k) = -((\cos \alpha +1)x_2+\sin \alpha \, x_3) y_2 + (\sin \alpha \, x_2 - (\cos \alpha +1) x_3) y_3 - 2 x_1 y_1 \cos \alpha, \\[3pt]
& \Tr(\Rz \hbn_i^2) = (1-\cos \alpha) x_1^2 -1 - \cos \alpha.
\end{aligned}
\end{align*}
Note that since $\Tr(\Rz \hbn_i^2)$ depends only on $x_1$, one has
\[ \Tr(\Rz \hbn_i^2) = \Tr(\Rz \hbn_k^2). \]
Hence, we can combine \eqref{rel3} and \eqref{rel4} to get
\[
\Tr( \Rz \hbn_i^2) = \Tr(\hbn_i \Rz \hbn_k),
\]
and by using the coordinate expressions for the two traces, we find
\begin{equation}
\label{eqn:eqn1}
 -((\cos \alpha +1)x_2+\sin \alpha \, x_3) y_2 + (\sin \alpha \, x_2 - (\cos \alpha +1) x_3) y_3 = (\cos \alpha +1)(x_1^2 -1).
\end{equation}

By using $y_1=x_1$ one finds that equation \eqref{rel1} leads to an identity, with both sides being equal to
\begin{align*}
%begin{aligned}
& x_1(\sin \alpha \, x_2 + (1-\cos \alpha) x_3) y_2 + x_1((\cos \alpha -1)x_2 + \sin \alpha \, x_3) y_3  \\
& \hspace{5cm} + x_1(1+x_1^2) \sin \alpha + ((x_3^2-1) \cos \alpha - x_2 x_3 \sin \alpha.
%\end{aligned}
\end{align*}
Finally, \eqref{rel3} yields
\begin{equation}
\label{eqn:eqn2}
\begin{aligned}
& ((x_2^2-1) \cos \alpha + x_2 x_3 \sin \alpha) y_2^2 + ((x_3^2-1) \cos \alpha - x_2 x_3 \sin \alpha) y_3^2 \\
& \qquad \qquad + (-\sin \alpha \, x_2^2 + \sin \alpha \, x_3^2 + 2 \cos \alpha \, x_2 x_3) y_2 y_3 = x_1^2 (x_1^2-1) \cos \alpha.
\end{aligned}
\end{equation}

We solve equations \eqref{eqn:eqn1} and \eqref{eqn:eqn2} for unknowns $y_2$ and $y_3$. By direct (but tedious) calculations, the two sets of solutions turn out to be:
\[
\begin{cases}
\displaystyle y_2 = x_2, \quad y_3 = x_3; \quad \mbox{and} \\
\displaystyle y_2 = \cos \alpha \, x_2 + \sin \alpha \, x_3, \quad y_3 = - \sin \alpha x_2 + \cos \alpha x_3,
\end{cases}
\]
or in vector form,
\[
\begin{cases}
\bn_k = \bn_i; \quad \mbox{and} \\
\bn_k = {\Rz}^\top  \bn_i.
%\qquad \text{ or } \qquad \bn_i = \Rz \bn_k.
\end{cases}
\]
Note that the latter solution satisfies
\[
R_k^0 \bn_k = R_i^0 \bn_i.
\]
Next we study the two cases separately. \newline

\noindent $\bullet$~Case A~ ($\bn_i = \bn_k:=\bn$; also, $\veeA_i = \veeA_k = \theta \bn$, and $A_i= A_k = \theta \hbn:= A$): Recall from the geometric interpretation of \eqref{PT-ij-geom} that at all times, $\veeA_i$ can be obtained as  the rotation by right-hand-rule of $\veeA_k$ about the axis $-\bn_{ki}$, by an angle $\frac{\theta_{ki}}{2}$. In particular, this statement holds at $t=0$. Provided $\theta_{ki}^0:= \theta_{ki}(0) \neq 0$, a rotation keeps a vector unchanged only if the vector is parallel to the rotation axis. Hence, $\bn \parallel \bn_{ki}^0$, which implies that the particles move along the same geodesic loop, which includes the geodesic curve between $R_i^0$ and $R_k^0$. In the trivial case $\theta_{ki}^0 = 0$, one gets $R_i^0=R_k^0$ and $R_i(t) = R_k(t)$ for all $t$. Note that moving along the same geodesic loop, the distance between the two particles remains constant in time, that is, $\theta_{ki}(t) = \theta_{ki}^0$ for all $t$. 

\medskip
\noindent $\bullet$ Case B~ ($\bn_i =  {R_i^0}^\top  R_k^0 \, \bn_k $, or equivalently $\veeA_i =  {R_i^0}^\top  R_k^0 \veeA_k $): Note that ${R_i^0}^\top  R_k^0$ is a rotation about $-\bn_{ki}^0$ by angle $\theta_{ki}^0$. This can only be consistent with the geometric interpretation of \eqref{PT-ij-geom} provided $\theta_{ki}^0 =0$ or equivalently $R_i^0=R_k^0$, in which case we get $R_i(t) = R_k(t)$ for all $t$.

The considerations above lead to the following proposition.
\begin{proposition}\label{P4.3}
Let $\{(R_i, A_i)\}_{i=1}^N$ be a solution of \eqref{D-1} which lies in the $\omega$-limit set. Then there exists a constant matrix $A = \widehat{\veeA}\in\mathfrak{so}(3)$ such that 
\begin{align*}
A_i=A,\qquad \text{ for all }i=1, 2, \dots, N.
\end{align*}
Furthermore, if $\theta_{ki}^0<\pi$ for all $i,k$, then $\bu_{ki}$ is parallel to $\veeA$ (see notations \eqref{eqn:utn-ki}), and this implies that all particles rotate along the same closed geodesic curve.
\end{proposition}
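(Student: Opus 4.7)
The strategy is to package the case analysis already developed in Section \ref{sec:4.4}. That analysis reduces the $\omega$-limit-set condition to the polynomial system \eqref{eqn:eqn1}--\eqref{eqn:eqn2}, whose only solutions split into Case A ($\bn_k = \bn_i$) and Case B ($\bn_k = {\Rz}^\top \bn_i$). The proposition then follows by collecting both cases for every pair $(i,k)$ and invoking the geometric interpretation of parallel transport given in Remark \ref{rmk:pt-geom}.

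First, for each fixed pair $(i,k)$, I would check that $\veeA_i = \veeA_k$ in either case. In Case A this is immediate from $\veeA_i = \theta \bn_i = \theta \bn_k = \veeA_k$. In Case B the text has already observed that the parallel-transport constraint forces $R_i^0 = R_k^0$; substituting this into $\bn_k = {\Rz}^\top \bn_i$ gives $\Rz = I$ and hence $\bn_k = \bn_i$, yielding $\veeA_i = \veeA_k$ again. Since $i,k$ are arbitrary, all velocity vectors coincide with a single constant $\veeA$, and the corresponding skew-symmetric matrix $A = \widehat{\veeA}$ satisfies $A_i \equiv A$, proving the first assertion.

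Second, assume the additional hypothesis $\theta_{ki}^0 < \pi$ for all pairs. Case B then forces $\theta_{ki}^0 = 0$, so for any distinct initial positions only Case A is available. Remark \ref{rmk:pt-geom} describes \eqref{PT-ij-geom} as the rotation of $\veeA_k$ about $-\bn_{ki}$ by angle $\theta_{ki}/2$; since $\veeA_i = \veeA_k = \veeA$, the vector $\veeA$ must be fixed by this rotation. Evaluating at $t=0$, the rotation angle $\theta_{ki}^0/2$ lies in $(0,\pi/2)$, so the only nonzero fixed vectors lie on the rotation axis. This yields $\veeA \parallel \bn_{ki}^0 \parallel \bu_{ki}$, and hence $R_k^0 = R_i^0 \exp(\theta_{ki}^0 \hbn_{ki}^0)$ lies on the geodesic through $R_i^0$ with generator $R_i^0 \widehat{\veeA}$. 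Since every particle evolves by $\dot R_j = R_j A$ with the common constant $A$, all particles traverse the same closed geodesic loop described at the start of Section \ref{sec:4.4}.

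The main obstacle I anticipate is the collapse of Case B to the trivial coincidence $R_i^0 = R_k^0$. The excerpt asserts this briefly, but the underlying point is that \eqref{PT-ij-geom} must hold for \emph{every} $t$, while $\bn_i$, $\bn_k$ and $\Rz$ are fixed and the geodesic distance $\theta_{ki}(t)$ together with $\bn_{ki}(t)$ varies with $t$. Reconciling a static relation $\bn_k = {\Rz}^\top \bn_i$ with the one-parameter family of rotations prescribed by Remark \ref{rmk:pt-geom} — checking in particular that no nontrivial choice of $\theta_{ki}^0 \in (0,\pi)$ can sustain the identity throughout the trajectory — is the subtlest step, and it is precisely where the injectivity-radius restriction $\theta_{ki}^0 < \pi$ is used to rule out anomalous fixed-vector configurations.
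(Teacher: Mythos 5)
Your overall strategy is the paper's own: reduce to the Case A / Case B dichotomy already derived in Section \ref{sec:4.4}, observe that Case B collapses to $R_i^0=R_k^0$, and use the rotation interpretation of \eqref{PT-ij-geom} to get $\veeA\parallel\bu_{ki}$ under the hypothesis $\theta_{ki}^0<\pi$. That part matches the paper and is fine.

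There is, however, a genuine gap in your proof of the \emph{first} assertion. The conclusion $A_i=A$ for all $i$ is stated unconditionally for points of the $\omega$-limit set, whereas the entire Case A / Case B analysis you invoke was derived only for pairs with $\theta_{ki}^0<\pi$: the derivation starts from the parallel-transport identity \eqref{eqn:PT}, which is imposed only where $d(R_i(t),R_k(t))<\pi$, and explicitly fixes a pair with $\theta_{ki}(0)\neq\pi$ so that the length-minimizing geodesic (and hence $\bn_{ki}$, $\theta_{ki}/\sin\theta_{ki}$, etc.) is well defined on a small time interval. Your argument therefore says nothing about a pair of particles that starts at the cut-locus distance $\theta_{ki}^0=\pi$, and for such a pair you cannot conclude $A_i=A_k$ from Cases A and B alone. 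The paper closes this case separately: if $\theta_{ki}^0=\pi$ and $A_i\neq A_k$, then along the (constant-coefficient) dynamics $\dot R_i=R_iA_i$, $\dot R_k=R_kA_k$ one has $0<\theta_{ki}(t)<\pi$ for small $t>0$; restarting time at such a $t$ and applying the first case (which is legitimate because the $\omega$-limit set is invariant) yields $A_i=A_k$, a contradiction. Hence $A_i=A_k$ also when $\theta_{ki}^0=\pi$. Without some argument of this kind, your claim that ``since $i,k$ are arbitrary, all velocity vectors coincide'' does not follow; note also that you cannot simply chain through intermediate particles, since nothing prevents a configuration in which some pairs sit exactly at distance $\pi$. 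Adding this cut-locus case would complete the proof; the rest of your write-up (including the closing discussion of why the static relation $\bn_k=\Rz^\top\bn_i$ must be tested against the full one-parameter family) is consistent with the paper's reasoning.
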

\begin{proof}
Fix an index $i \in \{1,\dots,N\}$ and take another generic index $k \neq i$. If $\theta_{ki}^0<\pi$, the discussion above applies and we have 
\begin{align*}
A_i=A_k :=A ,\qquad  \bu_{ki} \parallel \veeA,\qquad\text{and}\quad \theta_{ki}(t)=\theta_{ki}^0<\pi\quad\forall t\geq0,
\end{align*}
where $\widehat{\bu}_{ki}=\log(R_k^\top R_i)$ and $\hba = A$. Furthermore, $R_i$ and $R_k$ rotate on the same closed geodesic curve. Note that when $\theta_{ki}^0=0$ this reduces to the trivial case $R_i(t) = R_k(t)$ and $\hbu_{ki} = 0$, for all $t$.

Consider now the case $\theta_{ki}^0=\pi$. The matrices $R_i$ and $R_k$ are governed by the dynamics
\[
\dot{R}_i=R_i A_i,\quad \dot{R}_k=R_k A_k,
\]
for constant matrices $A_i, A_k\in\mathfrak{so}(3)$. If $A_i\neq A_k$, then $0<\theta_{ki}(t)<\pi$ for $0<t<\epsilon$, where $\epsilon$ is a sufficiently small positive constant. Then we can restart the time and apply the result for the first case, to obtain that $A_i=A_k$, which gives a contradiction. Therefore, we have $A_i=A_k$ in this case as well.

When particles satisfy $\theta_{ki}^0<\pi$ for all $i,k$, the first case applies to the entire group and all particles rotate along the same closed geodesic curve.
\end{proof}

The main result of this section is the following dichotomy on the long time behavior of the CS model on $\textrm{SO(3)}$. %This result will be investigated numerically in Section \ref{sec:5}.
\begin{theorem}\label{T4.2}
Let $\{(R_i(t), A_i(t))\}_{i=1}^N$ be a solution of \eqref{D-1}. Then, we have the following dichotomy for its asymptotic dynamics:
\begin{enumerate}
\item either the kinetic energy tends to zero:
\[
\lim_{t\to\infty}\mathcal{E}(t)=0,
\]
\item or the kinetic energy converges to a nonzero positive value  $\mathcal{E}^\infty$, and 
\begin{equation}
\label{VAM}
%\lim_{t\to\infty}(\ba_i(t)-\ba_k(t))=\mathbf{0},\qquad \lim_{t\to\infty}\mathrm{proj}_{\ba_i^\top}(\bu_{ki})=\mathbf{0}\quad\forall~1\leq i, k\leq N, \lim_{t\to\infty}\bu_{ki} \times \ba_i
\lim_{t\to\infty}(A_i(t)-A_k(t))=0,\qquad  \text{ for all }1\leq i, k\leq N,
\end{equation}
\begin{equation}
\label{eqn:lim-norm}
\lim_{t \to \infty} {\|A_i(t)\|}_\mathrm{F} = \frac{2}{N}\mathcal{E}^\infty, \qquad \text{ for all } 1 \leq i \leq N.
\end{equation}
%\[
%\lim_{t\to\infty}\mathrm{proj}_{\ba_i^\perp}(\bu_{ki})=\mathbf{0}, \qquad 1\leq i, k\leq N,
%\]
%where $A_i=\widehat{\ba}_i$ and $\widehat{\bu}_{ki}=\log(R_k^\top R_i)$. Here, $\mathrm{proj}_{\ba_i^\perp}(\bu_{ki})$ denotes the projection of $\bu_{ki}$ on the orthogonal complement of $\ba_i$. 
\end{enumerate}
\end{theorem}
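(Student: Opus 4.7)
The plan is to combine the dissipation identity \eqref{D-2} with LaSalle's invariance principle and the characterization of the $\omega$-limit set from Proposition \ref{P4.3}. Since $\mathcal{E}(t)$ is non-increasing by \eqref{D-2} and bounded below by zero, it converges to a limit $\mathcal{E}^\infty \geq 0$. Alternative (1) is immediate when $\mathcal{E}^\infty = 0$; the substance lies in the regime $\mathcal{E}^\infty > 0$.

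Assume $\mathcal{E}^\infty > 0$. Because $\mathrm{SO}(3)$ is compact and $\|A_i(t)\|_\mathrm{F} \leq \sqrt{2\mathcal{E}(0)}$ for all $t$, the forward orbit lies in the compact product $(\mathrm{SO}(3)\times \mathcal{D})^N$, so LaSalle's invariance principle applies. The $\omega$-limit set $\Omega$ is nonempty, compact, and positively invariant, and any trajectory contained in $\Omega$ satisfies $\frac{d\mathcal{E}}{dt}\equiv 0$. By the explicit dissipation \eqref{D-2}, this forces
\[
\phi(R_i, R_k)\,\|P_{ki}V_k - V_i\|_{R_i}^2 = 0, \qquad \text{for all } i, k,
\]
at every time along such a trajectory. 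Consequently every summand $\phi_{ik}(P_{ki}V_k - V_i)$ in \eqref{D-0-0} vanishes termwise (either $\phi_{ik} = 0$, or $P_{ki}V_k = V_i$), hence $\frac{DV_i}{dt} \equiv 0$ on $\Omega$, and each particle moves along a constant-speed geodesic loop. This is precisely the setting in which Proposition \ref{P4.3} places the trajectory inside the consensus submanifold $\{A_1 = A_2 = \cdots = A_N\}$.

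Since the forward orbit converges to its $\omega$-limit set in the compact phase space and $\Omega$ sits inside the consensus submanifold, we obtain $A_i(t) - A_k(t) \to 0$, establishing \eqref{VAM}. From the identity $\mathcal{E}(t) = \frac{1}{2}\sum_{i=1}^N \|A_i(t)\|_\mathrm{F}^2$ together with $\mathcal{E}(t) \to \mathcal{E}^\infty$ and the reverse triangle inequality applied to the alignment, each $\|A_i(t)\|_\mathrm{F}^2$ shares the same asymptotic behavior, and we conclude $\|A_i(t)\|_\mathrm{F}^2 \to 2\mathcal{E}^\infty/N$, which is \eqref{eqn:lim-norm}.

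The main obstacle is the second step, namely verifying the hypothesis of Proposition \ref{P4.3} on the entire $\omega$-limit set. The delicate point is that under $({\hyp}_A)$ the weight $\phi$ may vanish at cut-locus pairs, so the dissipation identity alone does not force $P_{ki}V_k = V_i$ for such pairs; however, the covariant derivative \eqref{D-0-0} is still a sum of products $\phi_{ik}(P_{ki}V_k - V_i)$, each vanishing termwise on $\Omega$, so the geodesic structure required by Proposition \ref{P4.3} is in place. The proposition's own continuity/restart argument then takes care of degenerate initial configurations with $\theta_{ki}^0 = \pi$, completing the chain.
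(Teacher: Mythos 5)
Your proof is correct and follows essentially the same route as the paper: monotonicity of $\mathcal{E}$, LaSalle's invariance principle on the compact set $(\mathrm{SO}(3)\times\mathcal{D})^N$, Proposition \ref{P4.3} to place the $\omega$-limit set inside the consensus set $\{A_1=\dots=A_N\}$, and the energy identity $\mathcal{E}=\frac{1}{2}\sum_k\|A_k\|_{\mathrm{F}}^2$ combined with \eqref{VAM} to extract the individual norm limits. One remark: your conclusion $\|A_i(t)\|_{\mathrm{F}}^2\to\frac{2}{N}\mathcal{E}^\infty$ is the correct statement; the unsquared norm in \eqref{eqn:lim-norm} and in the paper's final display is a typo, since the paper's own expansion yields $2\mathcal{E}^\infty=N\lim_{t\to\infty}\|A_i(t)\|_{\mathrm{F}}^2$.
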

\begin{proof}
It follows from \eqref{D-2} that $\mathcal{E}(t)$ is non-increasing. Since $\mathcal{E}(t)\geq0$, there exists a limit:
\begin{align}\label{EC}
\lim_{t\to\infty}\mathcal{E}(t)=\mathcal{E}^\infty.
\end{align}
If $\mathcal{E}^\infty=0$, we can obtain the first case. Now we assume that $\mathcal{E}^\infty>0$.

By LaSalle's invariance principle we infer that
\begin{align}\label{Las}
\lim_{t\to\infty}\mathrm{dist}(\mathcal{X}(t), \mathcal{S})=0,
\end{align}
where $\mathcal{X}(t)=\{(R_i(t), A_i(t))\}_{i=1}^N$ and $\mathcal{S}$ is the $\omega$-limit set of system \eqref{D-1}. From Proposition \ref{P4.3}, we know that any state
%$\{(\tilde{R}_i, \tilde{A}_i)\}_{i=1}^N$ in $\mathcal{S}$ satisfies 
$\{(R_i^\ast, A_i^\ast)\}_{i=1}^N$ in $\mathcal{S}$ satisfies 
%\begin{align}\label{AA}
%\tilde{A}_i\equiv \tilde{A}=\widehat{\tilde{\ba}}\in\mathfrak{so}(3),\quad \tilde{\bu}_{ki}~\|~\tilde{\ba}\qquad\forall~1\leq  i, k\leq N,
%\end{align}
\begin{align*}
% \label{AA}
{A}_i^\ast \equiv A^\ast \in\mathfrak{so}(3), \qquad \textrm{ for all } i=1,2,\dots.N.
%\qquad \bu_{ki}^\ast~\|~\ba^\ast, \qquad\forall~1\leq  i, k\leq N,
\end{align*}
%where $\widehat{\tilde{\bu}}_{ki}=\log(\tilde{R}_k^\top \tilde{R}_i)$.
%where $\widehat{\bu_{ki}^\ast}=\log({R_k^\ast}^\top R_i^\ast)$.
If we combine this fact with \eqref{Las}, we can then obtain \eqref{VAM}.
%\begin{align}\label{VAM}
%\lim_{t\to\infty}(A_i(t)-A_k(t))=0,\qquad \text{ for all } 1\leq i, k\leq N.
%\end{align}
%which is also equivalent to 
%\[
%\lim_{t\to\infty}(\ba_i(t)-\ba_k(t))=0,\qquad\forall~1\leq i, k\leq N.
%\]

Note that the asymptotic behavior in \eqref{VAM} only states that the differences between $A_i(t)$ and $A_k(t)$ converge to zero for all $i,k$. In particular, \eqref{VAM} does not imply that each $A_i(t)$ converges to some fixed $A\in \mathfrak{so}(3)$ as $t \to \infty$. On the other hand, one can show that the norms ${\|A_i(t)\|}_\mathrm{F}$ have a common fixed limit as $t \to \infty$.
%Also, by \eqref{AA} and \eqref{Las} we can also infer that for any two indices $i,k$,
%\[
%\lim_{t \to \infty} \bu_{ki}(t) \times \ba_i = 0.
%\]
To show this, express the energy functional $\mathcal{E}$ as:
\[
\mathcal{E}(t)=\sum_{k=1}^N {\|V_k(t)\|}_{R_k(t)}^2=\frac{1}{2}\sum_{k=1}^N {\|A_k(t)\|}_\mathrm{F}^2.
\]
By \eqref{EC} we then have
\begin{align}\label{ZZ-1}
\lim_{t\to\infty}\sum_{k=1}^N {\|A_k(t)\|}_\mathrm{F}^2=2\mathcal{E}^\infty.
\end{align}

For a fixed index $1\leq i\leq N$, we have 
\begin{align*}
\sum_{k=1}^N {\|A_k\|}_\mathrm{F}^2&=\sum_{k=1}^N {\|A_i+(A_k-A_i)\|}_\mathrm{F}^2\\
&=\sum_{k=1}^N\left({\|A_i\|}_\mathrm{F}^2+2 \langle A_i, A_k-A_i\rangle_\mathrm{F}+ {\|A_i-A_k\|}_\mathrm{F}^2\right)\\
&=N {\|A_i\|}_\mathrm{F}^2+2\sum_{k=1}^N\langle A_i, A_k-A_i\rangle_\mathrm{F}+\sum_{k=1}^N {\|A_i-A_k\|}_\mathrm{F}^2.
\end{align*}
Letting $t\to\infty$ in the equation above and using \eqref{ZZ-1}, we get
\begin{align}\label{ZZ-2}
2\mathcal{E}^\infty=\lim_{t\to\infty}\left( N {\|A_i\|}_\mathrm{F}^2+2\sum_{k=1}^N\langle A_i, A_k-A_i\rangle_\mathrm{F}+\sum_{k=1}^N {\|A_i-A_k\|}_\mathrm{F}^2 \right).
\end{align}
Now combine \eqref{VAM} and \eqref{ZZ-2} to obtain
\[
2\mathcal{E}^\infty=N\lim_{t\to\infty}{\|A_i(t)\|}_\mathrm{F},
\]
which yields \eqref{eqn:lim-norm}.

%Finally, by \eqref{AA}, all states in the $\omega$-limit set with $\ba^\ast \neq \mathbf{0}$ satisfy $\mathrm{proj}_{{\ba^\ast}^\perp}(\bu^\ast_{ki})=\mathbf{0}$. This implies the second part of the asymptotic behavior in part 2), and concludes the proof.

%Then we can well-define the orthogonal projection of $\bu_{ki}$ onto a space which orthogonal to a vector $\ba_i$ as $\mathrm{proj}_{\ba_i^\top}(\bu_{ki})$. Finally, we have
%\[
%\lim_{t\to\infty}\mathrm{proj}_{\ba_i^\top}(\bu_{ki})=\mathbf{0}\quad\forall~1\leq i, k\leq N
%\]
%from \eqref{Las} and \eqref{AA}.
\end{proof}
\begin{remark}
As noted above, \eqref{VAM} does not imply that each $A_i(t)$ converges  to a fixed skew-symmetric matrix as $t \to \infty$. Consequently, the directions of motion of the particles are not guaranteed to have a limit as $t \to \infty$. By \eqref{eqn:lim-norm} however, the particles' speeds $\|V_i(t)\|_{R_i(t)}=\frac{1}{2}{\|A_i(t)\|}_\mathrm{F}$ become equal asymptotically. Numerical simulations suggest (see Section \ref{sec:5}) that the directions of motion of the particles have in fact a common limit as $t \to \infty$, meaning that asymptotically, particles approach and rotate with constant speed along a common geodesic.
\end{remark}
%%%%%%%%%%

\section{Numerical Simulations} \label{sec:5}
\setcounter{equation}{0}
We solve numerically the CS model on $\mathrm{SO}(3)$ using the 4th order Runge-Kutta method. In the simulations presented below we initialize the rotation matrices $R_i$ in the angle-axis representation (see Section \ref{sec:2.1.2}). Specifically, the rotation angles $\theta_i$ ($i=1,\dots,N$) were initialized randomly in the interval $(0,\pi/2)$, while the unit vectors $\mathbf{v}_i$ were generated in spherical coordinates, with the polar and azimuthal angles drawn randomly in the intervals $(0,\pi)$ and $(0,2 \pi)$, respectively. For the initial velocities $R_i A_i$ ($i=1,\dots,N$), we generate the components of $\veeA_i$ (here, $\widehat{\veeA_i} = A_i$) randomly in the interval $(-1,1)$.

For plotting purposes, $\mathrm{SO}(3)$ is identified with the ball in $\bbr^3$ of radius $\pi$, centered at the origin. The center of the ball corresponds to the identity matrix $I$. A generic point within the ball represents a rotation matrix, with rotation angle given by the distance from the point to the center, and axis given by the ray from the center to the point. Antipodal points on the surface of the ball are identified, as they represent the same rotation matrix (rotation by $\pi$ about a ray gives the same result as rotation by $\pi$ about the opposite ray).

The numerical results we present correspond to the communication function
\[
\phi(R,Q) = \cos\left( \frac{d(R,Q)}{2}\right).
\]
Note that this function vanishes for pairs of cut points, i.e., for $d(R,Q)=\pi$. Similar results were obtained with other communication functions as well.

Figure \ref{fig:stationary} corresponds to a simulation where all particles come to a stop asymptotically. In Figure \ref{fig:stationary}(a) the initial and final locations of the particles are indicated by black dots and red diamonds, respectively. At $t=10,000$ particles have reached a configuration with an energy $\mathcal{E}$ of order $10^{-9}$. Also note the decay to $0$ over time of the energy, as shown in Figure \ref{fig:stationary}(b).
\begin{figure}[thb]
 \begin{center}
 \begin{tabular}{cc}
 \includegraphics[width=0.55\textwidth]{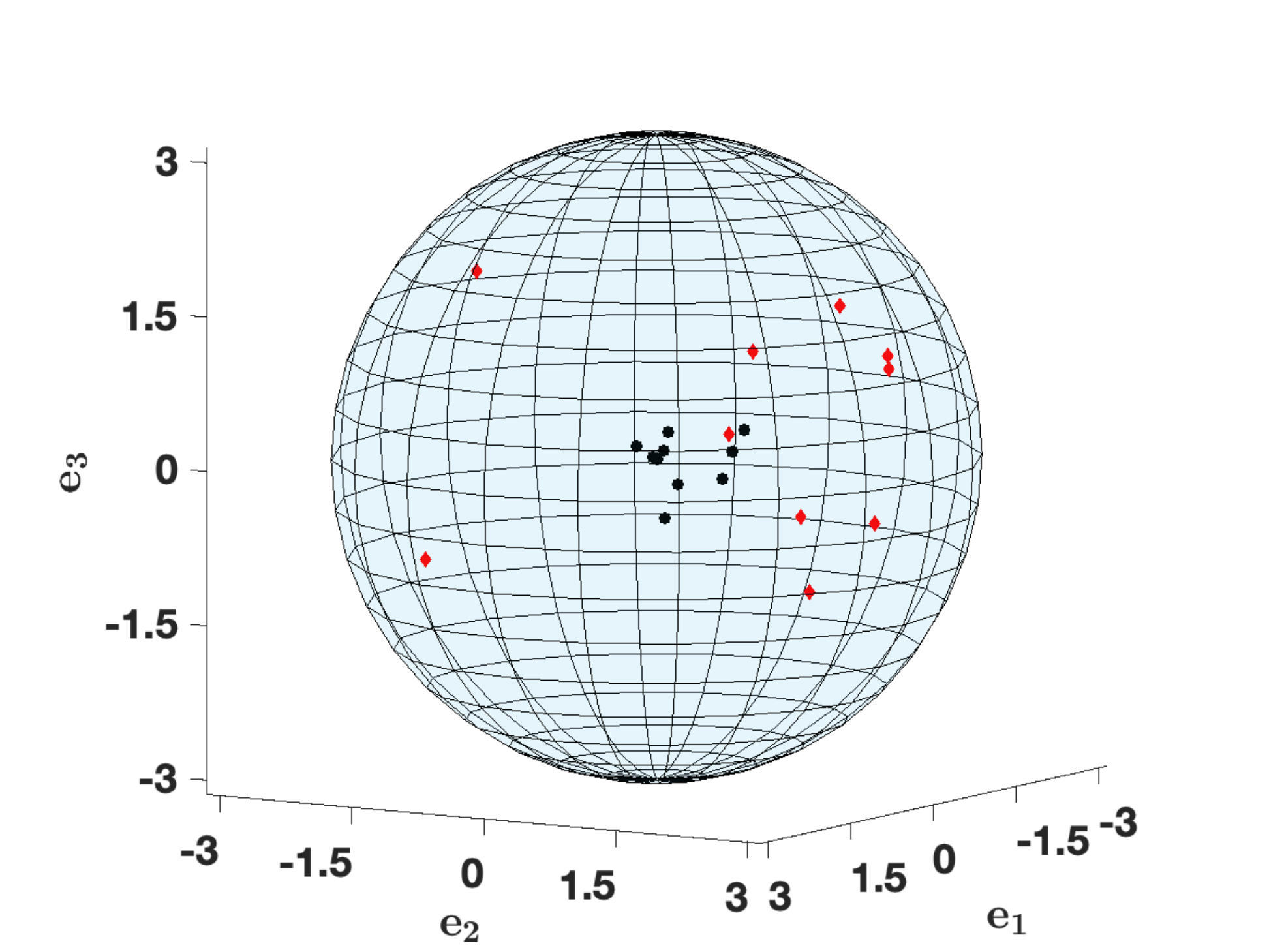}  &
 \includegraphics[width=0.45\textwidth]{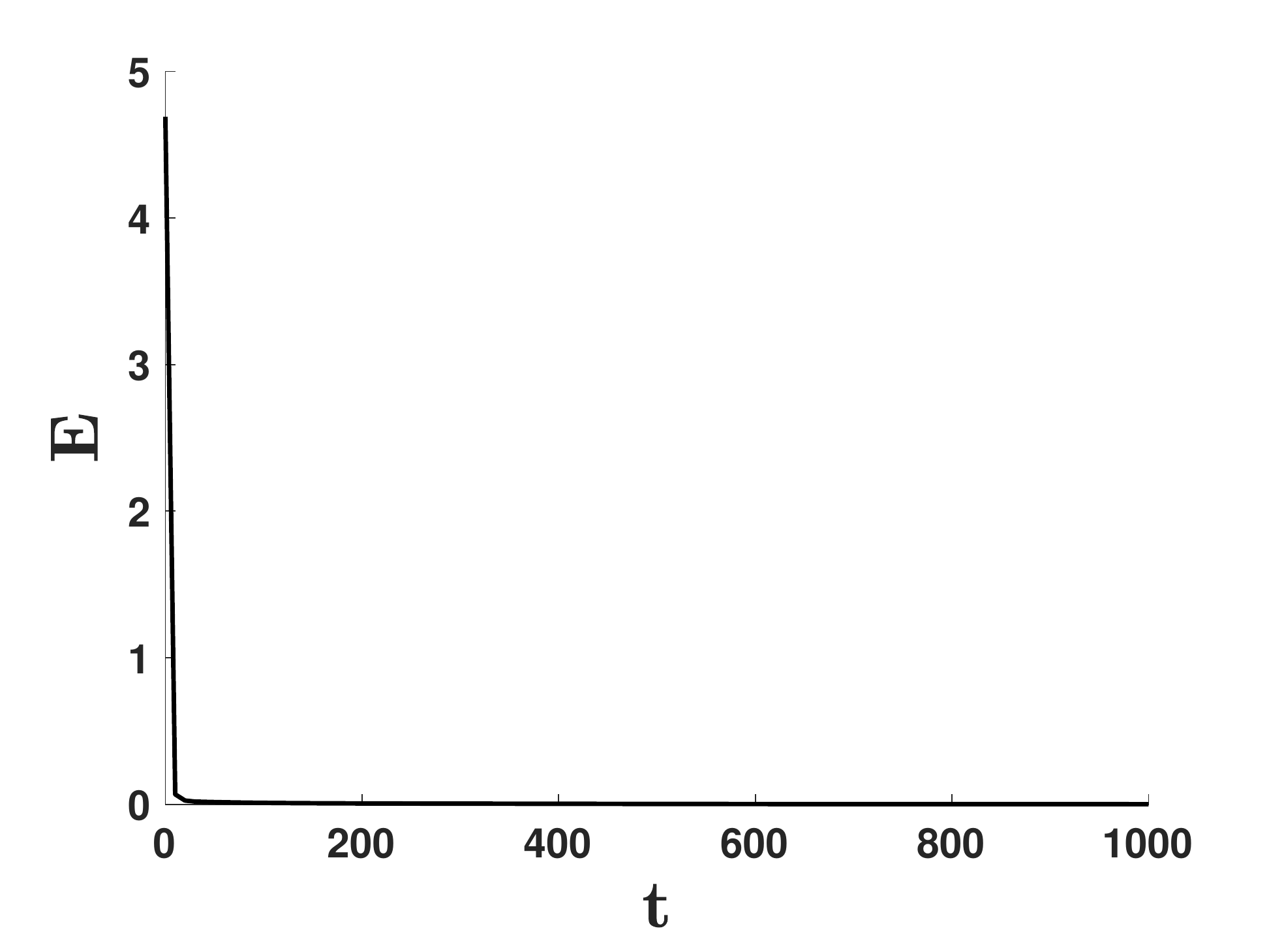}  \\
 (a) & (b) 
 \end{tabular}
 \begin{center}
 \end{center}
\caption{A group of $N=10$ particles reach a stationary state where all particles are at rest. (a) The initial state is indicated by black dots. Red diamonds indicate the particle locations at $t=10000$, where the particle configuration has reached an energy of order $10^{-9}$.  (b) Energy decaying to $0$ over time.}
\label{fig:stationary}
\end{center}
\end{figure}

Figure \ref{fig:flocking} shows a simulation in which particles reach asymptotically a flocking state. In Figure \ref{fig:flocking}(a) the initial locations of the particles are indicated by black dots. At $t=2,000$ the particles (red diamonds) have aligned along the closed geodesic path indicated by small blue dots, and we will rotate on this closed geodesic loop indefinitely. Note that the initial and end points of the geodesic path are antipodal of each other and hence, by the visualization of $\mathrm{SO}(3)$ that we use, they represent the same rotation matrix. Therefore, the geodesic path is in fact closed, though it does not appear so in the figure.  Figure \ref{fig:flocking}(b) shows the energy decay over time. As opposed to the previous simulation, the energy now approaches a non-zero value as $t\to \infty$, as particles do not stop, but keep moving on the geodesic loop.

\begin{figure}[thb]
 \begin{center}
 \begin{tabular}{cc}
 \includegraphics[width=0.58\textwidth]{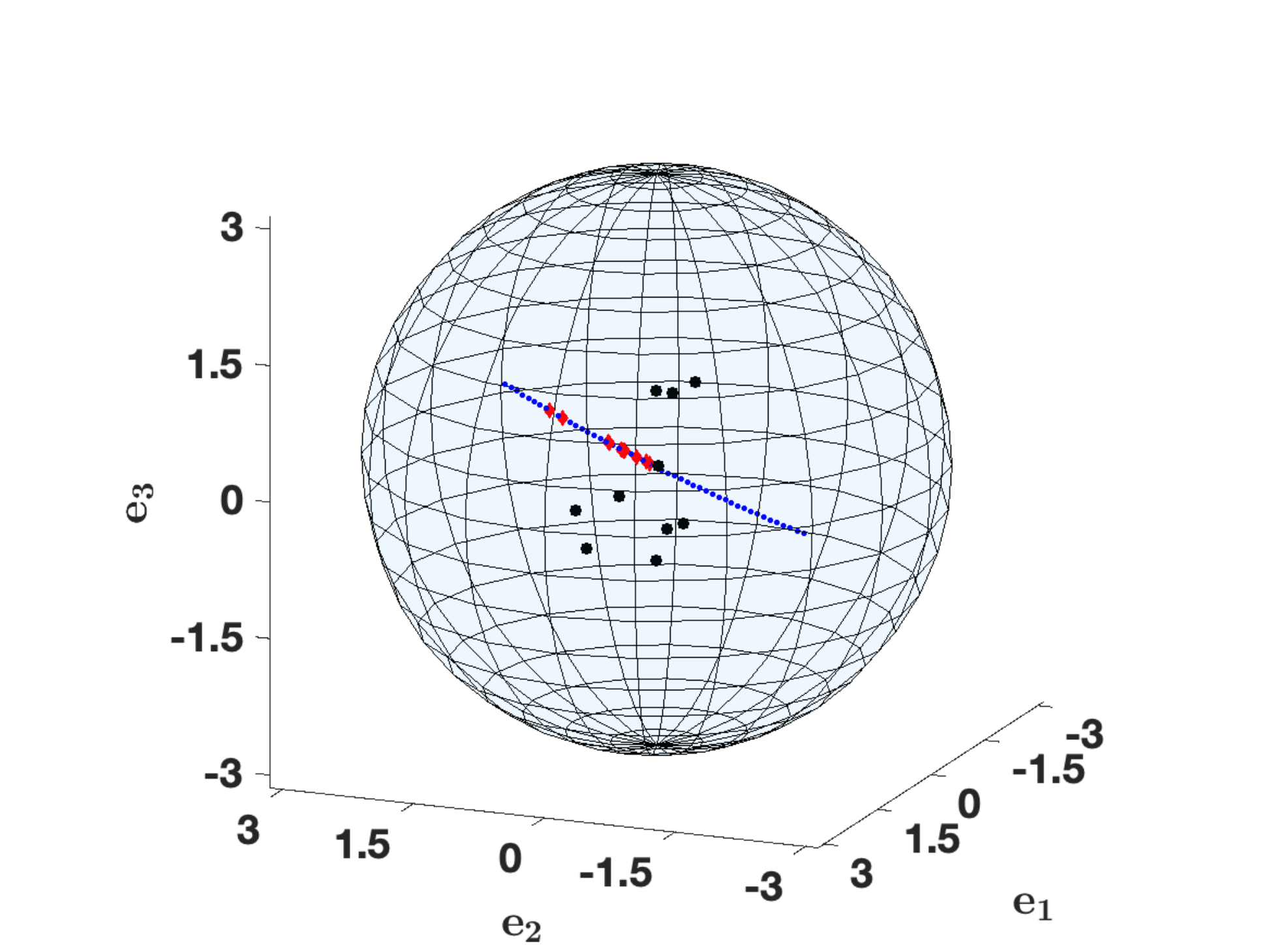} & 
 \includegraphics[width=0.42\textwidth]{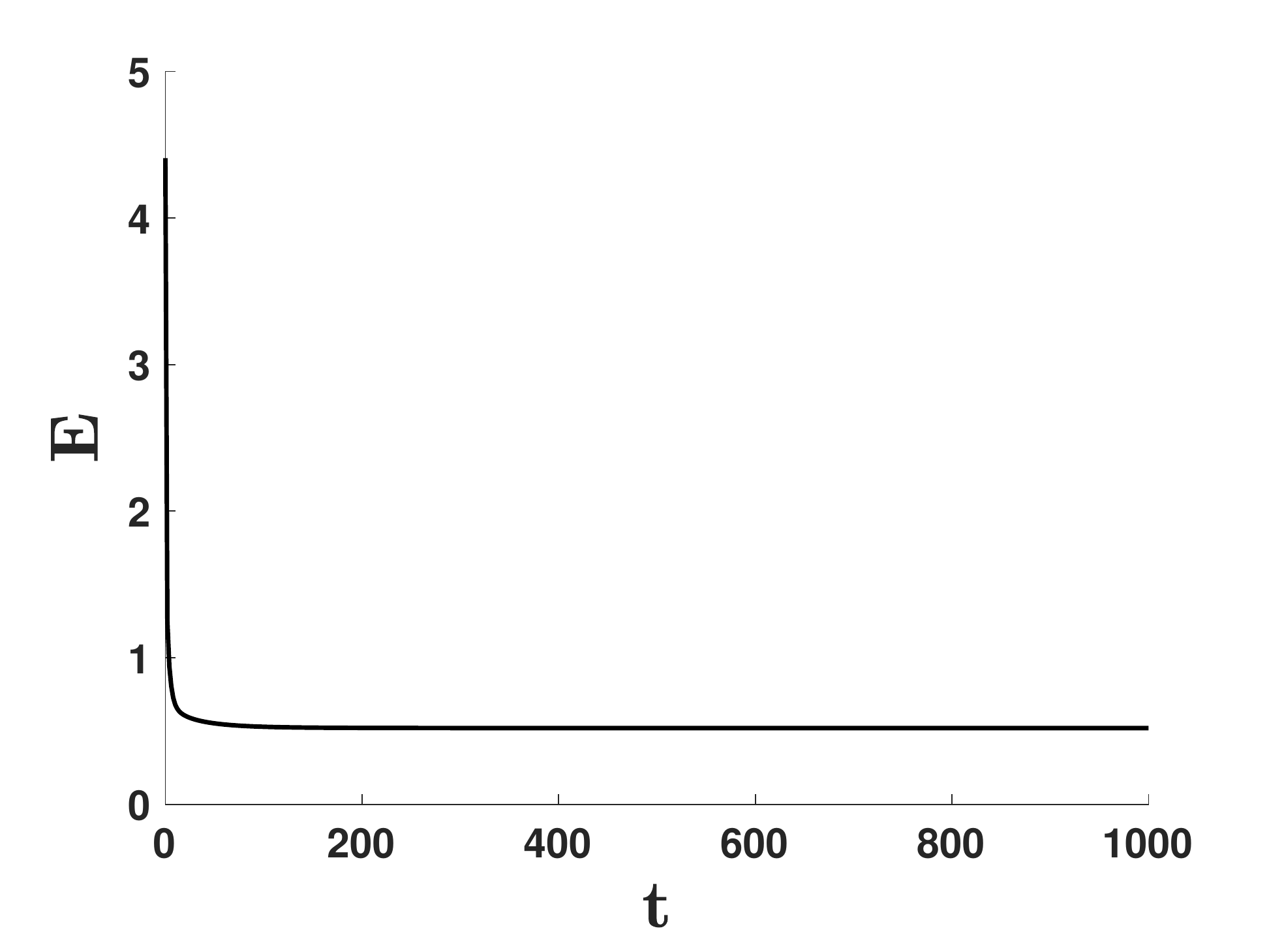}  \\
 (a) & (b) 
 \end{tabular}
 \begin{center}
 \end{center}
\caption{Asymptotic flocking of $N=10$ particles along a closed geodesic path. (a) The black dots and the red diamonds indicate  the particle locations at $t=0$ and $t=2000$, respectively. The particles have aligned their velocities (in the parallel transport sense) along the closed geodesic path shown by small blue dots, and will move along this path indefinitely. (b) Energy decay over time. Note that the energy approaches a non-zero value as $t \to \infty$.}
\label{fig:flocking}
\end{center}
\end{figure}

\section{Conclusion} \label{sec:6}
\setcounter{equation}{0}
In this paper, we have studied the Cucker-Smale model on $\mathrm{SO}(3)$ which exhibits  collective behaviors of a rotation matrix ensemble. In earlier work \cite{H-K-S}, a general and abstract Cucker-Smale model was proposed on a connected, complete and smooth Riemannian manifold using the covariant derivative and parallel transport of tangent vectors along length minimizing geodesics. 
%Also, the emergent behaviors of this general model were studied, using a priori boundedness on the second derivative of an energy functional. 
Thus, for the explicit dynamics and numerical simulations of particles lying on a given specific Riemannian manifold, one needs to find explicit forms for the covariant derivative and parallel transport in terms of state variables and the underlying geometric information. Fortunately, for the special orthogonal group, we can explicitly calculate the covariant derivative and parallel transport which results in the explicit representation of the Cucker-Smale model on $\mathrm{SO}(3)$. For the proposed model, we used a Lyapunov approach based on an energy functional. By deriving a dissipative estimate, we use La Salle's invariance principle to conclude the velocity alignment without any a priori conditions. 

Of course, there are still various issues that have not been addressed in the current work, for example, we do not have explicit information on the spatial structure of emerging asymptotic configurations and asymptotic velocity, due to the lack of enough conservation laws, etc. These interesting issues will be addressed in future work. 

\begin{appendix}
\section{Proofs of several lemmas} \label{App-A}
\setcounter{equation}{0}
In this appendix, we present proofs of several lemmas and details to various calculations employed in the main body of the paper. 

\subsection{Proof of  Lemma \ref{L2.2}.} \label{appendix:L2.2} By direct calculation, one has 
\begin{equation}
\label{eqn:xhat2}
\widehat{\mathbf{x}}^2=\begin{bmatrix}
-x_2^2-x_3^2&x_1x_2&x_1x_3\\
x_1x_2&-x_1^2-x_3^2&x_2x_3\\
x_1x_3&x_2x_3&-x_1^2-x_2^2
\end{bmatrix}.
\end{equation}
This yields
\begin{align*}
\widehat{\mathbf{x}}^3&=\begin{bmatrix}
-x_2^2-x_3^2&x_1x_2&x_1x_3\\
x_1x_2&-x_1^2-x_3^2&x_2x_3\\
x_1x_3&x_2x_3&-x_1^2-x_2^2
\end{bmatrix}\begin{bmatrix}
0&-x_3&x_2\\
x_3&0&-x_1\\
-x_2&x_1&0
\end{bmatrix}\\[3pt]
&=
\begin{bmatrix}
0&x_3(x_1^2+x_2^2+x_3^2)&-x_2(x_1^2+x_2^2+x_3^2)\\
-x_3(x_1^2+x_2^2+x_3^2)&0&x_1(x_1^2+x_2^2+x_3^2)\\
x_2(x_1^2+x_2^2+x_3^2)&-x_1(x_1^2+x_2^2+x_3^2)&0
\end{bmatrix} \\[3pt]
&=-\theta ^2\widehat{\mathbf{x}},
\end{align*}
which shows the first identity.

We verify the second identity for $\alpha=1$. The other cases can be treated similarly. 
\[
E_1\widehat{\mathbf{x}}=\begin{bmatrix}
0&0&0\\
0&0&-1\\
0&1&0
\end{bmatrix}\begin{bmatrix}
0&-x_3&x_2\\
x_3&0&-x_1\\
-x_2&x_1&0
\end{bmatrix}=\begin{bmatrix}
0&0&0\\
x_2&-x_1&0\\
x_3&0&-x_1
\end{bmatrix}.
\]
Furthermore,
\begin{align*}
x_1\widehat{\mathbf{x}}^2+\widehat{\mathbf{x}}^2E_1\widehat{\mathbf{x}}&=\widehat{\mathbf{x}}^2(x_1I+E_1\widehat{\mathbf{x}})=\widehat{\mathbf{x}}^2\begin{bmatrix}
x_1&0&0\\
x_2&0&0\\
x_3&0&0
\end{bmatrix}\\[3pt]
&=\begin{bmatrix}
-x_2^2-x_3^2&x_1x_2&x_1x_3\\
x_1x_2&-x_1^2-x_3^2&x_2x_3\\
x_1x_3&x_2x_3&-x_1^2-x_2^2
\end{bmatrix}\begin{bmatrix}
x_1&0&0\\
x_2&0&0\\
x_3&0&0
\end{bmatrix}
=\begin{bmatrix}
0&0&0\\
0&0&0\\
0&0&0
\end{bmatrix}.
\end{align*}
Similarly one check the identity for $\alpha=2$ and $\alpha=3$.
\medskip

%%%

\subsection{Proof of  Lemma \ref{L2.3}.} \label{appendix:L2.3} We provide derivations for the following three identities:
\begin{align*}
\begin{aligned}
& ~\langle \widehat{\mathbf{x}}^2,\widehat{\mathbf{x}}^2\rangle_{\mathrm{F}} =2\theta ^4. \\[2pt]
& ~\langle \widehat{\mathbf{x}}^2, \{E_\alpha, \widehat{\mathbf{x}}\}\rangle_{\mathrm{F}} =4 x_\alpha\theta ^2. \\[2pt]
& ~\langle \{E_\alpha, \widehat{\mathbf{x}}\}, \{E_\beta, \widehat{\mathbf{x}}\}\rangle_{\mathrm{F}} =6x_\alpha x_\beta+2\delta_{\alpha\beta}\theta ^2. 
\end{aligned}
\end{align*}

\noindent $\bullet$~(Derivation of the first identity):~we use \eqref{eqn:xhat2} to calculate
\begin{align*}
\langle \widehat{\mathbf{x}}^2, \widehat{\mathbf{x}}^2\rangle_{\mathrm{F}}&=(-x_2^2-x_3^2)^2+(x_1x_2)^2+(x_1x_3)^2+(x_1x_2)^2+(-x_1^2-x_3^2)^2\\
&+(x_2x_3)^2+
(x_1x_3)^2+(x_2x_3)^2+(-x_1^2-x_2^2)^2\\
&=2(x_1^2+x_2^2+x_3^2)^2=2\theta ^4.
\end{align*}

\vspace{0.1cm}

\noindent $\bullet$~(Derivation of the second identity):~we use the commutativity of the trace to get
\begin{equation} \label{Ap-1}
\langle \widehat{\mathbf{x}}^2, \{E_\alpha, \widehat{\mathbf{x}}\}\rangle_{\mathrm{F}}=\mathrm{tr}(\widehat{\mathbf{x}}^2(E_\alpha\widehat{\mathbf{x}}+\widehat{\mathbf{x}}E_\alpha))=2\mathrm{tr}(\widehat{\mathbf{x}}^3E_\alpha).
\end{equation}
Then use the first identity in Lemma \ref{L2.1} and the fact that $\hbx$ is skew-symmetric to find
\begin{equation} \label{Ap-2}
2\mathrm{tr}(\widehat{\mathbf{x}}^3E_\alpha)=-2\theta ^2\mathrm{tr}(\widehat{\mathbf{x}}E_\alpha)=2\theta ^2\langle \widehat{\mathbf{x}}, E_\alpha\rangle_{\mathrm{F}}.
\end{equation}
Finally, we combine \eqref{Ap-1}, \eqref{Ap-2} and  the third identity in Lemma \ref{L2.2} to obtain the desired second identity.
%\[
%\langle \widehat{\mathbf{x}}^2, \{E_\alpha, \widehat{\mathbf{x}}\}\rangle_{\mathrm{F}}=2\theta ^2\langle \widehat{\mathbf{x}}, E_\alpha \rangle_{\mathrm{F}}=4x_\alpha \theta ^2.
%\]

\vspace{0.1cm}

\noindent $\bullet$~(Derivation of the third identity):~Note that 
\begin{align}
\begin{aligned} \label{Ap-3}
&\langle \{E_\alpha, \widehat{\mathbf{x}}\}, \{E_\beta, \widehat{\mathbf{x}}\}\rangle_{\mathrm{F}} \\
& \hspace{0.5cm} =x_\kappa x_\lambda
\langle \{E_\alpha, E_\kappa\}, \{E_\beta, E_\lambda\}\rangle_{\mathrm{F}}  \\
& \hspace{0.5cm} =x_\kappa x_\lambda\langle E_\alpha E_\kappa+E_\kappa E_\alpha, E_\beta E_\lambda+E_\lambda E_\beta\rangle_{\mathrm{F}} \\
& \hspace{0.5cm} =x_\kappa x_\lambda\mathrm{tr}(E_\alpha E_\kappa E_\beta E_\lambda+E_\kappa E_\alpha E_\beta E_\lambda+E_\alpha E_\kappa E_\lambda E_\beta+E_\kappa E_\alpha E_\lambda E_\beta).
\end{aligned}
\end{align}
Next, we want to express
\[
\mathrm{tr}(E_\iota E_\kappa E_\mu E_\nu)
\]
for general $\iota, \kappa, \mu, \nu \in\{1,2, 3\}$. To simplify this term, we will use the notation introduced in Remark \ref{R2.1}. We use Einstein's convention (repeated index means summation) to find
\begin{align}
\begin{aligned} \label{Ap-4}
\mathrm{tr}(E_\iota E_\kappa E_\mu E_\nu)&=[E_\iota E_\kappa E_\mu E_\nu]_{\alpha\alpha}=[E_\iota]_{\alpha\beta}[E_\kappa]_{\beta\gamma}[E_\mu]_{\gamma\delta}[E_\nu]_{\delta\alpha}\\
&=\epsilon_{\iota\alpha\beta}\epsilon_{\kappa\beta\gamma}\epsilon_{\mu\gamma\delta}\epsilon_{\nu\delta\alpha}=(\epsilon_{\beta \iota\alpha}\epsilon_{\beta\gamma \kappa})(\epsilon_{\delta \mu\gamma}\epsilon_{\delta\alpha \nu})\\
&=(\delta_{\iota\gamma}\delta_{\alpha \kappa}-\delta_{\iota\kappa}\delta_{\alpha\gamma})(\delta_{\mu\alpha}\delta_{\gamma \nu}-\delta_{\mu \nu}\delta_{\gamma\alpha})\\
&=\delta_{\iota\gamma}\delta_{\alpha \kappa}\delta_{\mu\alpha}\delta_{\gamma \nu}-\delta_{\iota\gamma}\delta_{\alpha \kappa}\delta_{\mu \nu}\delta_{\gamma\alpha}-\delta_{\iota\kappa}\delta_{\alpha\gamma}\delta_{\mu\alpha}\delta_{\gamma \nu}+\delta_{\iota\kappa}\delta_{\alpha\gamma}\delta_{\mu \nu}\delta_{\gamma\alpha}\\
&=\delta_{\iota \nu}\delta_{\kappa \mu}-\delta_{\iota\kappa}\delta_{\mu \nu}-\delta_{\iota\kappa}\delta_{\mu \nu}+\delta_{\iota\kappa}\delta_{\mu \nu}\delta_{\alpha\alpha}\\
&=\delta_{\iota \nu}\delta_{\kappa \mu}+\delta_{\iota\kappa}\delta_{\mu \nu},
\end{aligned}
\end{align}
where we used $\delta_{\alpha\alpha}=3$ in last equality.  Now we combine \eqref{Ap-3} and \eqref{Ap-4} to get
\begin{align*}
&\langle \{E_\alpha, \widehat{\mathbf{x}}\}, \{E_\beta, \widehat{\mathbf{x}}\}\rangle_{\mathrm{F}} \\
&\qquad =x_\kappa x_\lambda\big( \delta_{\alpha \kappa}\delta_{\beta \lambda}+\delta_{\kappa\beta}\delta_{\alpha \lambda}+\delta_{\kappa\alpha}\delta_{\beta \lambda}+\delta_{\alpha \beta}\delta_{\lambda\kappa}+\delta_{\alpha \kappa}\delta_{\lambda\beta}+\delta_{\kappa \lambda}\delta_{\alpha \beta} +\delta_{\kappa\alpha}\delta_{\lambda\beta}+\delta_{\alpha \lambda}\delta_{\kappa\beta}\big)\\
&\qquad =x_\kappa x_\lambda\big(4\delta_{\alpha \kappa}\delta_{\beta \lambda}+2\delta_{\beta \kappa}\delta_{\alpha \lambda}+2\delta_{\alpha \beta}\delta_{\kappa \lambda}\big)\\
&\qquad =6x_\alpha x_\beta+2\delta_{\alpha \beta}\theta ^2.
\end{align*}

%%% 
\medskip

\subsection{Inverse of the metric tensor} \label{appendix:metric-inv} Let  $g_{\alpha\beta}$ be the metric tensor given in Proposition \ref{P2.2}(i). We look for the coefficients $g^{\beta \gamma}$ of the metric's inverse in the following ansatz:
\begin{equation}
\label{eqn:g-inv-decomp}
g^{\beta\gamma}=\mathcal{A}x_\beta x_\gamma+\mathcal{B}\delta^{\beta\gamma}.
\end{equation}
Now we calculate $g_{\alpha\beta}g^{\beta\gamma}$ using \eqref{eqn:metric} as follows:
\begin{align*}
g_{\alpha\beta}g^{\beta\gamma}&=\left(\left(\frac{2\cos\theta -2+\theta ^2}{\theta ^4}\right) x_\alpha x_\beta +\frac{2(1-\cos\theta )}{\theta ^2}\delta_{\alpha\beta} \right)(\mathcal{A}x_\beta x_\gamma+\mathcal{B}\delta^{\beta\gamma})\\
&=\mathcal{A}\left(\frac{2\cos\theta -2+\theta ^2}{\theta ^2}\right) x_\alpha x_\gamma+\mathcal{A}\left(\frac{2(1-\cos\theta )}{\theta ^2}\right) x_\alpha x_\gamma\\
&+\mathcal{B} \left(\frac{2\cos\theta -2+\theta ^2}{\theta ^4}\right) x_\alpha x_\gamma +\mathcal{B}\left(\frac{2(1-\cos\theta )}{\theta ^2}\right) \delta_\alpha^\gamma,
\end{align*}
where we used $x_\beta x_\beta = \theta^2$ in the second equality. \newline

Since we require to have $g_{\alpha\beta}g^{\beta\gamma}=\delta_{\alpha}^\gamma$, we simply set
\[ \mathcal{B}\left(\frac{2(1-\cos\theta )}{\theta ^2}\right)=1, \quad \mathcal{A}\left(\frac{2\cos\theta -2+\theta ^2}{\theta ^2}\right)+\mathcal{A}\left(\frac{2(1-\cos\theta )}{\theta ^2}\right)+\mathcal{B}\left(\frac{2\cos\theta -2+\theta ^2}{\theta ^4}\right)=0.
\]
Then, this yields
\begin{align}
\label{eqn:AB}
\begin{aligned}
\mathcal{A}=\frac{2\cos\theta -2+\theta ^2}{2\theta ^2(\cos\theta -1)},\quad
\mathcal{B}=\frac{\theta ^2}{2(1-\cos\theta )}.
\end{aligned}
\end{align}
Finally, \eqref{eqn:g-inv-decomp} and \eqref{eqn:AB} imply the desired relation for $g^{\alpha \beta}$ in Proposition \ref{P2.2}(ii). 

%%%
\medskip

\subsection{Christoffel symbols \eqref{eqn:Christoffel}}\label{appendix:Christoffel} Note that 
\[
g_{\alpha \beta}= \left(\frac{2\cos\theta -2+\theta ^2}{\theta ^4}\right) x_\alpha x_\beta +\frac{2(1-\cos\theta )}{\theta ^2}\delta_{\alpha\beta}.
\]
We differentiate  \eqref{eqn:metric} to find
\begin{align*}
\partial_\gamma g_{\alpha \beta}(\mathbf{x})&=\left(\frac{2\cos\theta -2+\theta ^2}{\theta ^4}\right) (x_\alpha\delta_{\beta\gamma}+x_\beta\delta_{\alpha\gamma})\\
&\quad +\left(\left(\frac{8-8\cos\theta -2\theta \sin\theta -2\theta ^2}{\theta ^5}\right) x_\alpha x_\beta+2 \left(\frac{-2+2\cos\theta +\theta \sin\theta }{\theta ^3}\right) \delta_{\alpha\beta} \right)\frac{x_\gamma}{\theta }\\
&= \left(\frac{2\cos\theta -2+\theta ^2}{\theta ^4}\right)(x_\alpha\delta_{\beta\gamma}+x_\beta\delta_{\alpha\gamma}+2x_\gamma\delta_{\alpha\beta})+2 \left(\frac{\theta \sin\theta -\theta ^2}{\theta ^4}\right) \delta_{\alpha\beta}x_\gamma\\
&\quad +\left(\frac{8-8\cos\theta -2\theta \sin\theta -2\theta ^2}{\theta ^6}\right)x_\alpha x_\beta x_\gamma.
\end{align*}
This yields
\begin{align}
\label{eqn:deriv-sum}
\begin{aligned}
&\partial_\beta g_{\gamma\alpha}+\partial_\alpha g_{\gamma\beta}-\partial_\gamma g_{\alpha\beta}  =2 \left(\frac{2\cos\theta -2+\theta ^2}{\theta ^4}\right) (x_\beta\delta_{\alpha\gamma}+x_\alpha\delta_{\beta\gamma})\\
&+2\left(\frac{\theta \sin\theta -\theta ^2}{\theta ^4}\right) (\delta_{\alpha\gamma}x_\beta+\delta_{\beta\gamma}x_\alpha-\delta_{\alpha\beta}x_\gamma)
+\left(\frac{8-8\cos\theta -2\theta \sin\theta -2\theta ^2}{\theta ^6}\right) x_\alpha x_\beta x_\gamma.
\end{aligned}
\end{align}
Now, we use  explicit relation of the inverse metric together with $x_\kappa x_\kappa = \theta^2$ to see
\begin{align*}
g^{\lambda\gamma} x_\gamma&=\left(\frac{2\cos\theta -2+\theta ^2}{2\theta ^2(\cos\theta -1)}\right) x_\lambda \theta ^2+\left(\frac{\theta ^2}{2(1-\cos \theta)}\right) x_\lambda=x_\lambda.
\end{align*}
Now use \eqref{eqn:deriv-sum} to find the Christoffel symbols:
\begin{align}
\begin{aligned} \label{Z-1}
\Gamma^\lambda_{\alpha\beta}& =\frac{1}{2}g^{\lambda \gamma}(\partial_\beta g_{\gamma \alpha}+\partial_\alpha g_{\gamma \beta}-\partial_\gamma g_{\alpha\beta}) \\
&=g^{\lambda\gamma}(x_\beta\delta_{\alpha\gamma}+x_\alpha\delta_{\beta\gamma})\left(\frac{2\cos\theta -2+\theta ^2}{\theta ^4}\right)+g^{\lambda\gamma}(\delta_{\alpha\gamma}x_\beta+\delta_{\beta\gamma}x_\alpha-\delta_{\alpha\beta}x_\gamma)\left(\frac{\theta \sin\theta -\theta ^2}{\theta ^4}\right) \\[2pt]
& \quad +g^{\lambda\gamma}x_\alpha x_\beta x_\gamma\left(\frac{4-4\cos\theta -\theta \sin\theta -\theta ^2}{\theta ^6}\right)\\
&= \left( g^{\lambda \alpha} x_\beta + g^{\lambda \beta} x_\alpha  \right) \left(\frac{2\cos\theta -2+\theta ^2}{\theta ^4}\right)+(g^{\lambda \alpha} x_\beta + g^{\lambda \beta} x_\alpha -\delta_{\alpha\beta}x_\lambda)\left(\frac{\theta \sin\theta -\theta ^2}{\theta ^4}\right)\\[2pt]
&= \left( g^{\lambda \alpha} x_\beta + g^{\lambda \beta} x_\alpha  \right) \left(\frac{2\cos\theta -2+\theta \sin \theta}{\theta ^4}\right) -\delta_{\alpha\beta}x_\lambda\left(\frac{\theta \sin\theta -\theta ^2}{\theta ^4}\right) \\
&\quad +x_\alpha x_\beta x_\lambda\left(\frac{4-4\cos\theta -\theta \sin\theta -\theta ^2}{\theta ^6}\right).
\end{aligned}
\end{align}
By \eqref{eqn:g-inv-decomp} and \eqref{eqn:AB}, write:
\begin{align}
\begin{aligned} \label{Z-2}
g^{\lambda \alpha} x_\beta + g^{\lambda \beta} x_\alpha &=2\mathcal{A}x_\lambda x_\alpha x_\beta+\mathcal{B}(x_\alpha\delta_\beta^\lambda+x_\beta\delta_\alpha^\lambda) \\[3pt]
&=\frac{2\cos\theta -2+\theta ^2}{\theta ^2(\cos\theta -1)}x_\lambda x_\alpha x_\beta+\frac{\theta ^2}{2(1-\cos\theta )}(x_\alpha\delta_\beta^\lambda+x_\beta\delta_\alpha^\lambda).
\end{aligned}
\end{align}
Finally, we combine \eqref{Z-1} and \eqref{Z-2} to derive \eqref{eqn:Christoffel} for the Christoffel coefficients.

%%%
\medskip

\subsection{Equation  \eqref{eqn:geod-check}  for geodesics} \label{appendix:geod} Let $x_\alpha(t) = t u_\alpha.$ Then, \eqref{eqn:geod-check} is equivalent to show
\[
\Gamma^\gamma_{\alpha \beta}(\bx(t)) x_\alpha(t) x_\beta(t) = 0.
\]
We multiply $\Gamma^\gamma_{\alpha \beta} (\bx)$ from \eqref{eqn:Christoffel} by $x_\alpha x_\beta$,  and then sum up the resulting relation over $\alpha$ and $\beta$, and use $x_\alpha x_\alpha = \| \bx\|^2 = \theta^2$ to obtain
\begin{align*}
\Gamma^\gamma_{\alpha \beta} (\bx) x_\alpha x_\beta& = \frac{(\sin \theta + \theta)(\cos \theta -1) + \theta^2 \sin \theta}{\theta^5 (\cos \theta -1)}  x_\alpha x_\beta x_\gamma x_\alpha x_\beta \\[3pt]
&\quad + \frac{2\cos\theta -2+\theta \sin\theta}{2 \theta^2 (1-\cos\theta )}(x_\alpha\delta_\beta^\gamma+x_\beta\delta_\alpha^\gamma) x_\alpha x_\beta - \frac{\sin\theta -\theta}{\theta^3} \delta_{\alpha\beta}x_\gamma x_\alpha x_\beta \\[3pt]
&= \frac{(\sin \theta + \theta)(\cos \theta -1) + \theta^2 \sin \theta}{\theta(\cos \theta -1)}  x_\gamma + \frac{2\cos\theta -2+\theta \sin\theta}{1-\cos\theta} x_\gamma - \frac{\sin\theta -\theta}{\theta} x_\gamma.
\end{align*}
Note that $x_\gamma$ factors out in the R.H.S. above. By a direct calculation, one can see that sum of coefficients equal to zero:
\[
 \frac{(\sin \theta + \theta)(\cos \theta -1) + \theta^2 \sin \theta}{\theta(\cos \theta -1)} + \frac{2\cos\theta -2+\theta \sin\theta}{1-\cos\theta} - \frac{\sin\theta -\theta}{\theta} =0.
\]
This establishes the desired result.

%%%
\medskip

\subsection{Closed form expression \eqref{B-11} of parallel transport} \label{appendix:pt-closed} 
We use \eqref{eqn:XalphaR1}, \eqref{eqn:v1-mod} and $v_\alpha(1) u_\alpha = \calC$ (see \eqref{eqn:calC}) to express $V(1)$ as follows:
\begin{align*}
V(1)&=v_\alpha(1)X_\alpha(R_1)\\
%&=\left(\frac{-\sin\theta+\theta\cos\theta}{\theta^3}\right)v_\alpha(1)u_\alpha R_1\widehat{\mathbf{u}}+\frac{\sin\theta}{\theta} v_\alpha(1)R_1E_\alpha\\
%&+\left(\frac{\theta\sin\theta-2(1-\cos\theta)}{\theta^4}\right)v_\alpha(1)u_\alpha R_1\widehat{\mathbf{u}}^2+\frac{1-\cos\theta}{\theta^2}v_\alpha(1)R_1\{E_i, \widehat{\mathbf{u}}\}\\
&=\left(\frac{-\sin\theta+\theta\cos\theta}{\theta^3}\right)\mathcal{C}R_0\widehat{\mathbf{u}}+\frac{\sin\theta}{\theta} \left(\frac{\theta v_\alpha(0)}{2\sin\frac{\theta }{2}}+\mathcal{C}u_\alpha\left(\frac{1}{\theta^2}-\frac{1}{2\theta\sin\frac{\theta}{2}}\right)\right)R_0E_\alpha\\
&\quad +\left(\frac{\theta\sin\theta-2(1-\cos\theta)}{\theta^4}\right)\mathcal{C}R_0\widehat{\mathbf{u}}^2 \\
&\quad +\frac{1-\cos\theta}{\theta^2}\left(\frac{\theta v_\alpha(0)}{2\sin\frac{\theta }{2}}+\mathcal{C}u_\alpha\left(\frac{1}{\theta^2}-\frac{1}{2\theta\sin\frac{\theta}{2}}\right)\right)(R_0E_\alpha\widehat{\mathbf{u}}+R_0\widehat{\mathbf{u}}E_\alpha).
\end{align*}
By \eqref{eqn:V0} and \eqref{eqn:XalphaR1}, one can write 
\[ V(0)=v_\alpha(0)R_0E_\alpha \quad \mbox{and} \quad \hbu = u_\alpha E_\alpha. \]
So one can continue the calculation above to get:
\begin{align*}
V(1)&=\left(\frac{-\sin\theta+\theta\cos\theta}{\theta^3}\right)\mathcal{C}R_0\widehat{\mathbf{u}}+\frac{\sin\theta}{\theta} \left(\frac{\theta V(0)}{2\sin\frac{\theta }{2}}+\mathcal{C}R_0\widehat{\mathbf{u}}\left(\frac{1}{\theta^2}-\frac{1}{2\theta\sin\frac{\theta}{2}}\right)\right)\\
&\quad +\left(\frac{\theta\sin\theta-2(1-\cos\theta)}{\theta^4}\right)\mathcal{C}R_0\widehat{\mathbf{u}}^2\\
&\quad +\frac{1-\cos\theta}{\theta^2}\left(\frac{\theta }{2\sin\frac{\theta }{2}}(V(0)\widehat{\mathbf{u}}+R_0\widehat{\mathbf{u}}R_0^\top V(0))+2R_0\widehat{\mathbf{u}}^2\mathcal{C}\left(\frac{1}{\theta^2}-\frac{1}{2\theta\sin\frac{\theta}{2}}\right)\right)\\
&=\mathcal{C}R_0\widehat{\mathbf{u}}\left(\frac{-\sin\theta+\theta\cos\theta}{\theta^3}+\frac{\sin\theta}{\theta}\left(\frac{1}{\theta^2}-\frac{1}{2\theta\sin\frac{\theta}{2}}\right) \right)\\
&\quad +\mathcal{C}R_0\widehat{\mathbf{u}}^2\left(\frac{\theta\sin\theta-2(1-\cos\theta)}{\theta^4}+2\frac{1-\cos\theta}{\theta^2}\left(\frac{1}{\theta^2}-\frac{1}{2\theta\sin\frac{\theta}{2}}\right)\right)\\
&\quad +\frac{1-\cos\theta}{\theta^2}\left(\frac{\theta }{2\sin\frac{\theta }{2}}(V(0)\widehat{\mathbf{u}}+R_0\widehat{\mathbf{u}}R_0^\top V(0))\right)+\frac{\sin\theta}{\theta} \left(\frac{\theta V(0)}{2\sin\frac{\theta }{2}}\right)\\
&=\mathcal{C}R_0\widehat{\mathbf{u}}\left(\frac{\cos\theta-\cos\frac{\theta}{2}}{\theta^2} \right)+\mathcal{C}R_0\widehat{\mathbf{u}}^2\left(\frac{\sin\theta}{\theta^3}-\frac{2\sin\frac{\theta}{2}}{\theta^3}\right)\\
&\quad +\frac{\sin\frac{\theta}{2}}{\theta}\left(V(0)\widehat{\mathbf{u}}+R_0\widehat{\mathbf{u}}R_0^\top V(0)\right)+\cos\frac{\theta }{2}V(0).
\end{align*}

%From the definition of the constant $\mathcal{C}$, we have
%\[
%\mathcal{C}=v_m(0)u_m=\frac{1}{2}\langle V(0), R_1\widehat{\mathbf{u}}\rangle_{\mathrm{F}}=\frac{\theta}{4\sin\theta}\langle V(0), R_2-R_1R_2^\top R_1\rangle_{\mathrm{F}}.
%\]

%%%
\medskip

\subsection{Expression \eqref{B-12} for $A_1$} \label{appendix:A1} We use \eqref{eqn:RT},  \eqref{eqn:RT-cont} in \eqref{eqn:A1-int} and arrange terms to get
\begin{align}
\begin{aligned} \label{NN-1}
A_1&=\mathcal{C}\left(\frac{\cos\theta-\cos\frac{\theta}{2}}{\theta^2} \right)\left(\cos\theta \widehat{\mathbf{u}}-\frac{\sin\theta}{\theta}\widehat{\mathbf{u}}^2\right)
+\mathcal{C}\left(\frac{\sin\theta-2\sin\frac{\theta}{2}}{\theta^3}\right)\left(\cos\theta \widehat{\mathbf{u}}^2+\theta\sin\theta\widehat{\mathbf{u}}\right)\\
&\quad +\frac{\sin\frac{\theta}{2}}{\theta}\left(I-\frac{\sin\theta}{\theta}\widehat{\mathbf{u}}+\frac{1-\cos\theta}{\theta^2}\widehat{\mathbf{u}}^2\right)\left(A_0\widehat{\mathbf{u}}+\widehat{\mathbf{u}}A_0\right)+\cos\frac{\theta }{2}\left(I-\frac{\sin\theta}{\theta}\widehat{\mathbf{u}}+\frac{1-\cos\theta}{\theta^2}\widehat{\mathbf{u}}^2\right)A_0\\[3pt]
&=\mathcal{C}\left(\frac{1-\cos\theta\cos\frac{\theta}{2}-2\sin\theta\sin\frac{\theta}{2}}{\theta^2}\right)\widehat{\mathbf{u}}
+\mathcal{C}\left(\frac{\cos\frac{\theta}{2}\sin\theta-2\sin\frac{\theta}{2}\cos\theta}{\theta^3}\right)\widehat{\mathbf{u}}^2\\
&\quad +\frac{\sin\frac{\theta}{2}}{\theta}\left(A_0\widehat{\mathbf{u}}+\widehat{\mathbf{u}}A_0\right)
-\frac{\sin\frac{\theta}{2}\sin\theta}{\theta^2}(\widehat{\mathbf{u}}A_0\widehat{\mathbf{u}}+\widehat{\mathbf{u}}^2A_0)
+\left(\frac{\sin\frac{\theta}{2}(1-\cos\theta)}{\theta^3}\right)\left(\widehat{\mathbf{u}}^2A_0\widehat{\mathbf{u}}+\widehat{\mathbf{u}}^3A_0\right)\\
&\quad +\cos\frac{\theta }{2}A_0-\frac{\cos\frac{\theta }{2}\sin\theta}{\theta}\widehat{\mathbf{u}}A_0+\frac{\cos\frac{\theta }{2}(1-\cos\theta)}{\theta^2}\widehat{\mathbf{u}}^2A_0.
\end{aligned}
\end{align}
After some simple trigonometric manipulations, we can write \eqref{NN-1} as 
\begin{align}
\label{eqn:A1-cont}
\begin{aligned}
A_1 &=\mathcal{C}\left(\frac{1-\cos\theta\cos\frac{\theta}{2}-2\sin\theta\sin\frac{\theta}{2}}{\theta^2}\right)\widehat{\mathbf{u}}
+\mathcal{C}\left(\frac{2\sin^3\frac{\theta}{2}}{\theta^3}\right)\widehat{\mathbf{u}}^2 \\
&\quad +\frac{\sin\frac{\theta}{2}}{\theta}\left(A_0\widehat{\mathbf{u}}+\widehat{\mathbf{u}}A_0\right)
-\frac{2\sin^2\frac{\theta}{2}\cos\frac{\theta}{2}}{\theta^2}(\widehat{\mathbf{u}}A_0\widehat{\mathbf{u}}+\widehat{\mathbf{u}}^2A_0)
+\left(\frac{2\sin^3\frac{\theta}{2}}{\theta^3}\right)\left(\widehat{\mathbf{u}}^2A_0\widehat{\mathbf{u}}-\theta^2\widehat{\mathbf{u}}A_0\right)\\
&\quad +\cos\frac{\theta }{2}A_0-\frac{2\cos^2\frac{\theta }{2}\sin\frac{\theta}{2}}{\theta}\widehat{\mathbf{u}}A_0+\frac{2\sin^2\frac{\theta}{2}\cos\frac{\theta}{2}}{\theta^2}\widehat{\mathbf{u}}^2A_0\\[5pt]
&=\mathcal{C}\left(\frac{1-\cos\theta\cos\frac{\theta}{2}-2\sin\theta\sin\frac{\theta}{2}}{\theta^2}\right)\widehat{\mathbf{u}}
+\mathcal{C}\left(\frac{2\sin^3\frac{\theta}{2}}{\theta^3}\right)\widehat{\mathbf{u}}^2\\
&\quad +\frac{\sin\frac{\theta}{2}}{\theta}\left(A_0\widehat{\mathbf{u}}-\widehat{\mathbf{u}}A_0\right)
-\frac{2\sin^2\frac{\theta}{2}\cos\frac{\theta}{2}}{\theta^2}\widehat{\mathbf{u}}A_0\widehat{\mathbf{u}}
+\left(\frac{2\sin^3\frac{\theta}{2}}{\theta^3}\right)\widehat{\mathbf{u}}^2A_0\widehat{\mathbf{u}}
+\cos\frac{\theta }{2}A_0.
\end{aligned}
\end{align}
To derive further simplification of \eqref{eqn:A1-cont}, we present an elementary lemma as follows.
\begin{lemma}\label{LA.1}
The following identity holds:
\[
\mathcal{C}\widehat{\mathbf{u}}^2+\widehat{\mathbf{u}}^2R_0^\top V(0)\widehat{\mathbf{u}}=0,
\]
where the constant $\calC$ is given in \eqref{eqn:calC}.
\end{lemma}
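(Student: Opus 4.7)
The plan is to reduce the identity directly to the second half of Lemma \ref{L2.2}, namely
\[
x_\alpha \widehat{\mathbf{x}}^2 + \widehat{\mathbf{x}}^2 E_\alpha \widehat{\mathbf{x}} = 0, \qquad \alpha = 1,2,3,
\]
by choosing $\bx = \bu$ and taking the appropriate linear combination of the three $\alpha$-components.

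First, I would rewrite $R_0^\top V(0)$ in terms of the canonical basis $\{E_1, E_2, E_3\}$ of $\mathfrak{so}(3)$. By \eqref{eqn:V0} and $\eqref{eqn:XalphaR1}_1$, one has $V(0) = v_\alpha(0) R_0 E_\alpha$, hence
\[
R_0^\top V(0) = v_\alpha(0) E_\alpha.
\]
Next I would recall from \eqref{eqn:calC} that the constant of motion is
\[
\mathcal{C} = v_\gamma(0) u_\gamma = v_\alpha(0) u_\alpha.
\]

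Now apply Lemma \ref{L2.2} with $\bx = \bu$: for each $\alpha \in \{1,2,3\}$,
\[
u_\alpha \widehat{\mathbf{u}}^2 + \widehat{\mathbf{u}}^2 E_\alpha \widehat{\mathbf{u}} = 0.
\]
Multiplying by $v_\alpha(0)$ and summing over $\alpha$ yields
\[
\bigl( v_\alpha(0) u_\alpha \bigr) \widehat{\mathbf{u}}^2 + \widehat{\mathbf{u}}^2 \bigl( v_\alpha(0) E_\alpha \bigr) \widehat{\mathbf{u}} = 0.
\]
Substituting the two expressions above gives exactly
\[
\mathcal{C}\, \widehat{\mathbf{u}}^2 + \widehat{\mathbf{u}}^2 \, R_0^\top V(0)\, \widehat{\mathbf{u}} = 0,
\]
which is the claimed identity. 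There is no substantive obstacle here: the entire content of the lemma is packaged in the $\bbr^3$-to-$\mathfrak{so}(3)$ identification together with the pointwise identity already proved in Lemma \ref{L2.2}; the only work is bookkeeping to pass from the indexed form to the coordinate-free statement.
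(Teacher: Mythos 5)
Your proof is correct and is essentially identical to the paper's own argument: both apply the identity $u_\alpha \widehat{\mathbf{u}}^2 + \widehat{\mathbf{u}}^2 E_\alpha \widehat{\mathbf{u}} = 0$, multiply by $v_\alpha(0)$, sum over $\alpha$, and invoke $R_0^\top V(0) = v_\alpha(0) E_\alpha$ together with $\mathcal{C} = v_\alpha(0) u_\alpha$. (You even attribute the key identity to the correct lemma, Lemma \ref{L2.2}, whereas the paper's proof cites Lemma \ref{L2.1} by an apparent slip.)
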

\begin{proof}
By the second identity in Lemma \ref{L2.1}, we have
\[
u_\alpha\widehat{\mathbf{u}}^2+\widehat{\mathbf{u}}^2E_\alpha\widehat{\mathbf{u}}=0.
\]
We multiply by $v_\alpha(0)$ and sum up  the resulting relation over $\alpha$ to get
\[
v_\alpha(0)u_\alpha\widehat{\mathbf{u}}^2+\widehat{\mathbf{u}}^2v_\alpha(0)E_\alpha\widehat{\mathbf{u}}=0.
\]
By $v_\alpha(0)u_\alpha=\mathcal{C}$ and $V(0)=v_\alpha(0) R_1E_\alpha$ (see \eqref{eqn:V0} and \eqref{eqn:XalphaR1}), one can obtain the desired identity. 
\end{proof}
Now, we use Lemma \ref{LA.1} to get 
\[
\mathcal{C}\widehat{\mathbf{u}}^2=-\widehat{\mathbf{u}}^2R_0^\top V(0)\widehat{\mathbf{u}}=-\widehat{\mathbf{u}}^2A_0\widehat{\mathbf{u}}.
\]
Finally, we use this identity in the second and the fifth terms in the R.H.S. of \eqref{eqn:A1-cont} to cancel, and we derive \eqref{B-12}.

%%%
\medskip

\subsection{Proof of Lemma \ref{lem:various}} \label{proofL:various}
Take  $\bx=(x_1,x_2,x_3)$, $\by=(y_1,y_2,y_3)$ in $\bbr^3$.

By direct calculation, we have
\begin{align}\label{mpd}
\widehat{\bx}\widehat{\by}=\begin{bmatrix}
0&-x_3&x_2\\
x_3&0&-x_1\\
-x_2&x_1&0
\end{bmatrix}\begin{bmatrix}
0&-y_3&y_2\\
y_3&0&-y_1\\
-y_2&y_1&0
\end{bmatrix}=\begin{bmatrix}
-x_3y_3-x_2y_2&x_2y_1&x_3y_1\\
x_1y_2&-x_3y_3-x_1y_1&x_3y_2\\
x_1y_3&x_2y_3&-x_2y_2-x_1y_1
\end{bmatrix}.
\end{align}
This implies 
\[
\Tr(\hbx \hby) =-2(x_1y_1+x_2y_2+x_3y_3)=-2\,\bx\cdot\by,
\]
which is the first identity in \eqref{eqn:id1}.

Then, from \eqref{mpd} we have
\begin{align*}
&\hbx\hby-\hby\hbx\\
&=\begin{bmatrix}
-x_3y_3-x_2y_2&x_2y_1&x_3y_1\\
x_1y_2&-x_3y_3-x_1y_1&x_3y_2\\
x_1y_3&x_2y_3&-x_2y_2-x_1y_1
\end{bmatrix}
-\begin{bmatrix}
-x_3y_3-x_2y_2&x_1y_2&x_1y_3\\
x_2y_1&-x_3y_3-x_1y_1&x_2y_3\\
x_3y_1&x_3y_2&-x_2y_2-x_1y_1
\end{bmatrix}\\[5pt]
&=\begin{bmatrix}
0&x_2y_1-x_1y_2&x_3y_1-x_1y_3\\
x_1y_2-x_2y_1&0&x_3y_2-x_2y_3\\
x_1y_3-x_3y_1&x_2y_3-x_3y_3&0
\end{bmatrix}.
\end{align*}
Since the matrix on the r.h.s. above is  $\widehat{\bx \times \by}$, we get the second identity in \eqref{eqn:id1}.

Finally, also from \eqref{mpd}, we get
\begin{align*}
\hbx\hby\hbx&=\begin{bmatrix}
-x_3y_3-x_2y_2&x_2y_1&x_3y_1\\
x_1y_2&-x_3y_3-x_1y_1&x_3y_2\\
x_1y_3&x_2y_3&-x_2y_2-x_1y_1
\end{bmatrix}\begin{bmatrix}
0&-x_3&x_2\\
x_3&0&-x_1\\
-x_2&x_1&0
\end{bmatrix}\\[5pt]
&=\begin{bmatrix}
0&x_3(x_1y_1+x_2y_2+x_3y_3)&-x_2(x_1y_1+x_2y_2+x_3y_3)\\
-x_3(x_1y_1+x_2y_2+x_3y_3)&0&x_1(x_1y_1+x_2y_2+x_3y_3)\\
x_2(x_1y_1+x_2y_2+x_3y_3)&-x_1(x_1y_1+x_2y_2+x_3y_3)&0
\end{bmatrix} \\[5pt]
&=-(\bx\cdot\by)\hbx.
\end{align*}

%%%
\medskip

\subsection{Proof of Lemma \ref{L4.1}.}\label{proofL4.1}
(i) By the explicit formula of $R_i(t)$ and $R_k(t)$, we have
\[
R_i(t)=R_i^0 \exp(t A_i),\quad R_k(t)= R_k^0 \exp(t A_k ).
\]
Recall that $A_i =\widehat{\veeA}_i = \theta \hbn_i$ and $A_k = \widehat{\veeA}_k = \theta \hbn_k$ (see \eqref{eqn:aiak}). Hence, we have:
%For $t\in [0,\frac{2\pi}{\theta}]$, we have
\begin{align}
\begin{aligned}
\label{eqn:term1}
& \langle  \hbn_i , R_k(t)^\top  R_i(t) \rangle_{\mathrm{F}}  = \Tr( \hbn_i  R_i(t)^\top  R_k(t))  = \Tr ( \hbn_i  \exp(-t \widehat{\veeA}_i) {R_i^0}^\top  R_k^0 \exp(t \widehat{\veeA}_k)) \\
& \hspace{0.5cm}  = \Tr \left(\hbn_i  (I - \sin(\theta t) \hbn_i  +(1-\cos(\theta t))\hbn_i^2)  R^0 (I + \sin(\theta t) \hbn_k +(1-\cos(\theta t)) \hbn_k^2) \right),
\end{aligned}
\end{align} 
where we used Rodrigues's formula and the notation $\Rz = {R_i^0}^\top  R_k^0$. 

Note that 
\begin{align}
\begin{aligned} \label{New-3}
& \hbn_i  (I - \sin(\theta t) \hbn_i  +(1-\cos(\theta t))\hbn_i^2)  \Rz (I + \sin(\theta t) \hbn_k +(1-\cos(\theta t)) \hbn_k^2) \\
& \qquad =  (\hbn_i  - \sin(\theta t) \hbn_i^2  - (1-\cos(\theta t))\hbn_i))  \Rz (I + \sin(\theta t) \hbn_k +(1-\cos(\theta t)) \hbn_k^2), \\
& \qquad =  (\cos(\theta t) \hbn_i - \sin(\theta t) \hbn_i^2)  \Rz (I + \sin(\theta t) \hbn_k +(1-\cos(\theta t)) \hbn_k^2),
\end{aligned}
\end{align} 
where we used that $\hbn_i^3 = -\hbn_i$ for the second equal sign. Now, by the trigonometric identities:
\[
\sin^2(\theta t) = \frac{1-\cos(2 \theta t)}{2}, \quad  \text{ and } \quad \cos^2(\theta t) =  \frac{1+\cos(2 \theta t)}{2},
\]
we write the R.H.S. of \eqref{New-3} as a linear combination of trigonometric functions:
\begin{align}
\begin{aligned}
\label{eqn:calc1}
&  (\cos(\theta t) \hbn_i - \sin(\theta t) \hbn_i^2) \Rz (I + \sin(\theta t) \hbn_k +(1-\cos(\theta t)) \hbn_k^2)  \\
& \hspace{0.2cm} = - \frac{1}{2} \hbn_i \Rz \hbn_k^2- \frac{1}{2} \hbn_i^2 \Rz \hbn_k +(- \hbn_i^2 \Rz - \hbn_i^2 \Rz \hbn_k^2) \sin(\theta t) + (\hbn_i \Rz + \hbn_i \Rz \hbn_k^2) \cos(\theta t)  \\
& \hspace{0.2cm}+ (\frac{1}{2} \hbn_i \Rz \hbn_k + \frac{1}{2} \hbn_i^2 \Rz \hbn_k^2) \sin(2 \theta t) +(-\frac{1}{2} \hbn_i \Rz \hbn_k^2+ \frac{1}{2} \hbn_i^2 \Rz \hbn_k ) \cos(2 \theta t).
\end{aligned}
\end{align} 
Combining \eqref{eqn:term1}, \eqref{New-3} and \eqref{eqn:calc1} leads to identity (i) in Lemma \ref{L4.1}.
\vspace{0.2cm}

\noindent (ii) Similarly, one has
\begin{align}
\begin{aligned}
\label{eqn:term2}
& \langle  \hbn_k , R_k(t)^\top  R_i(t) \rangle_{\mathrm{F}} = \Tr( \hbn_k  R_i(t)^\top  R_k(t))  =  \Tr(R_i(t)^\top  R_k(t) \hbn_k ) \\
& \hspace{0.5cm} = \Tr ((I - \sin(\theta t) \hbn_i  +(1-\cos(\theta t))\hbn_i^2)  \Rz (I + \sin(\theta t) \hbn_k +(1-\cos(\theta t)) \hbn_k^2) \hbn_k).
\end{aligned}
\end{align} 
Note that 
\begin{align}
\begin{aligned} \label{New-4}
&(I - \sin(\theta t) \hbn_i  +(1-\cos(\theta t))\hbn_i^2)  \Rz (I + \sin(\theta t) \hbn_k +(1-\cos(\theta t)) \hbn_k^2) \hbn_k \\
& \qquad = (I - \sin(\theta t) \hbn_i  +(1-\cos(\theta t))\hbn_i^2)  \Rz (\hbn_k + \sin(\theta t) \hbn_k ^2- (1-\cos(\theta t)) \hbn_k) \\
& \qquad =  (I - \sin(\theta t) \hbn_i  +(1-\cos(\theta t))\hbn_i^2)  \Rz (\cos(\theta t)) \hbn_k + \sin(\theta t) \hbn_k ^2).
\end{aligned}
\end{align}
By writing the R.H.S. as a linear combination of trigonometric functions we find:
\begin{align}
\begin{aligned}
\label{eqn:calc2}
& (I - \sin(\theta t) \hbn_i  +(1-\cos(\theta t))\hbn_i^2)  \Rz (\cos(\theta t)) \hbn_k + \sin(\theta t) \hbn_k ^2) \\
& = -\frac{1}{2} \hbn_i \Rz \hbn_k^2 -\frac{1}{2} \hbn_i^2 \Rz \hbn_k + (\Rz \hbn_k^2 +  \hbn_i^2 \Rz \hbn_k^2) \sin(\theta t) + ( \Rz \hbn_k + \hbn_i^2 \Rz \hbn_k) \cos(\theta t) \\
&  + (-\frac{1}{2} \hbn_i \Rz \hbn_k -\frac{1}{2} \hbn_i^2 \Rz \hbn_k^2) \sin(2 \theta t) + (\frac{1}{2} \hbn_i \Rz \hbn_k^2 - \frac{1}{2} \hbn_i^2 \Rz \hbn_k) \cos(2 \theta t).
\end{aligned}
\end{align}
Combining \eqref{eqn:term2}, \eqref{New-4} and \eqref{eqn:calc2} leads to identity (ii).
\qed
\end{appendix}

\bibliographystyle{abbrv}
%\def\url#1{}
%\bibliography{lit}

\bibliographystyle{amsplain}

\end{document}